%%%%%%%%%%%%%%%%%%%%%%%%%%%

%Document class
\documentclass[11pt, reqno]{amsart}

%Margins and spacing
\usepackage[top=3.75cm, bottom=3cm, left=3cm, right=3cm]{geometry}
\frenchspacing 

%Packages
\usepackage{amsmath}
\usepackage{amsthm}
\usepackage{amsfonts}
\usepackage{amssymb}
\usepackage{mathtools}
\usepackage{bm}
\usepackage[colorlinks=true, citecolor=citation, urlcolor=citation, linkcolor=reference]{hyperref}
\usepackage{color}
\usepackage[all,cmtip]{xy}
\usepackage{enumitem}
\usepackage{tikz}  
\usepackage{graphicx}
\usepackage{scalerel}
\usepackage{todonotes}
\usepackage{comment}
\usepackage[mathscr]{eucal}

%Color definitions 
\definecolor{citation}{rgb}{0,.40,.80}
\definecolor{reference}{rgb}{.80,0,.40}

%Numbering
\numberwithin{equation}{section}

%Theorem environments
\theoremstyle{plain}
\newtheorem{theorem}{Theorem}[section]
\newtheorem{lemma}[theorem]{Lemma}
\newtheorem{proposition}[theorem]{Proposition}
\newtheorem{corollary}[theorem]{Corollary}
\newtheorem{conjecture}[theorem]{Conjecture}
\newtheorem{question}[theorem]{Question}

\theoremstyle{definition}
\newtheorem{definition}[theorem]{Definition}
\newtheorem{example}[theorem]{Example}
\newtheorem{remark}[theorem]{Remark}

\newtheorem{warning}[theorem]{Warning}

%List indentation
\setlist[itemize]{leftmargin=*, itemsep={2pt}}
\setlist[enumerate]{leftmargin=*, itemsep={2pt}} 

%Set notation 
\newcommand{\st}{\mid} 
\newcommand{\set}[1]{\left\{ \, #1 \, \right\}}

%Categories, functors

\newcommand{\Db}{\mathrm{D^b}}
\newcommand{\Dperf}{\mathrm{D}_{\mathrm{perf}}}
\newcommand{\Dqc}{\mathrm{D}_{\mathrm{qc}}}
\newcommand{\llangle}{\left \langle}
\newcommand{\rrangle}{\right \rangle}
\DeclareMathOperator{\Forg}{Forg}

\DeclareMathOperator{\colim}{colim}

\newcommand{\Mod}{\mathrm{Mod}}

\newcommand{\Fun}{\mathrm{Fun}}
\newcommand{\Ind}{\mathrm{Ind}}

\newcommand{\Nm}{\mathrm{Nm}}

%Varieties and maps 
\newcommand{\Gr}{\mathrm{Gr}}

\DeclareMathOperator{\Pic}{Pic}

\DeclareMathOperator{\Spec}{Spec}

\newcommand{\op}{\mathrm{op}}
\newcommand{\br}{\mathrm{br}}

%Hom, etc.
\newcommand{\cHom}{\mathcal{H}\!{\it om}}
\DeclareMathOperator{\Hom}{Hom}

\DeclareMathOperator{\Ext}{Ext}
\DeclareMathOperator{\Aut}{Aut}

\DeclareMathOperator{\HH}{HH}
\DeclareMathOperator{\cHH}{\mathcal{HH}}
\DeclareMathOperator{\Map}{Map}

\newcommand{\re}{\mathrm{e}}
\newcommand{\ph}{\mathrm{ph}}

\DeclareMathOperator{\PGL}{PGL}

%Random
\newcommand{\svee}{\scriptscriptstyle\vee}
\newcommand{\id}{\mathrm{id}}
\newcommand{\pr}{\mathrm{pr}}

\newcommand{\ob}{\mathrm{ob}}

\newcommand{\Cat}{\mathrm{Cat}}
\newcommand{\Catcl}{\mathrm{Cat}^{\mathrm{cl}}}

\newcommand{\Ku}{\mathcal{K}u}

\DeclareMathOperator{\characteristic}{char}

\newcommand{\tH}{\widetilde{\rH}}

\newcommand{\rtop}{\mathrm{top}}
\newcommand{\disc}{\mathrm{disc}}

\newcommand{\an}{\mathrm{an}}

\newcommand{\Ktop}[1][]{\rK_{#1}^{\rtop}}

\newcommand{\CG}{\mathrm{CG}}

\def\blank{-}

\newcommand{\EPW}{\mathrm{EPW}}

%Chern characters
\newcommand{\ch}{\mathrm{ch}}
\newcommand{\rk}{\mathrm{rk}}
\newcommand{\td}{\mathrm{td}}

%Calligraphic font
\newcommand{\cO}{\mathcal{O}}
\newcommand{\cA}{\mathcal{A}}
\newcommand{\cC}{\mathscr{C}}
\newcommand{\cD}{\mathcal{D}}
\newcommand{\ccD}{\mathscr{D}}
\newcommand{\cE}{\mathcal{E}}
\newcommand{\cF}{\mathcal{F}}
\newcommand{\cG}{\mathcal{G}}
\newcommand{\cH}{\mathcal{H}}

\newcommand{\cK}{\mathcal{K}}
\newcommand{\cL}{\mathcal{L}}
\newcommand{\cM}{\mathcal{M}}
\newcommand{\cN}{\mathcal{N}}

\newcommand{\cS}{\mathcal{S}}

\newcommand{\cU}{\mathcal{U}}
\newcommand{\cV}{\mathcal{V}}
\newcommand{\cW}{\mathcal{W}}

\newcommand{\cY}{\mathcal{Y}}
\newcommand{\cZ}{\mathcal{Z}}

%Alternate calligraphic font 

%Regular font

\newcommand{\rH}{\mathrm{H}}
\newcommand{\rh}{\mathrm{h}}
\newcommand{\rK}{\mathrm{K}}
\newcommand{\rS}{\mathrm{S}}
\newcommand{\rL}{\mathrm{L}}

\newcommand{\rR}{\mathrm{R}}

\newcommand{\rT}{\mathrm{T}}

%Fraktur font 

\newcommand{\fp}{\mathfrak{p}}
\newcommand{\fm}{\mathfrak{m}}
\newcommand{\fU}{\mathfrak U}

%Bold font
\newcommand{\bC}{\mathbf{C}}

\newcommand{\bZ}{\mathbf{Z}}
\newcommand{\bP}{\mathbf{P}}
\newcommand{\bQ}{\mathbf{Q}}

%Sans serif font

%%%%%%%%%%%%%%%%%%%%%%%%%%%%%%%%%%%%%%%%%%%%%%%%%%%%%%

\overfullrule=1mm

%%%%%%%%%%%%%%%%%%%%%%%%%%%%%%%%%%%%%%%%%%%%%%%%%%%%%%
%%%%%%%%%%%%%%%%%%%%%%%%%%%%%%%%%%%%%%%%%%%%%%%%%%%%%%

\begin{document}

\title[Kuznetsov's conjecture via K3 categories and group actions]{Kuznetsov's Fano threefold conjecture via K3 categories and enhanced group actions}

\author{Arend Bayer}
\address{School of Mathematics and Maxwell Institute, University of Edinburgh, James Clerk Max\-well Building, Peter Guthrie Tait Road, Edinburgh, EH9 3FD,United Kingdom}
\email{arend.bayer@ed.ac.uk}

\author{Alexander Perry}
\address{Department of Mathematics, University of Michigan, Ann Arbor, MI 48109 \smallskip}
\email{arper@umich.edu}

\thanks{A.B. was supported by EPSRC grant EP/R034826/1, and the ERC Grant ERC-2018-CoG-819864-WallCrossAG. A.P. was partially supported by NSF grants DMS-2112747, DMS-2052750, and DMS-2143271, a Sloan Research Fellowship, and the Institute for Advanced Study.} 

%\date{\today}

\begin{abstract}
We settle the last open case of Kuznetsov's conjecture on the derived categories of Fano threefolds. 
Contrary to the original conjecture, we prove the Kuznetsov components of quartic double solids and Gushel--Mukai threefolds are never equivalent, 
as recently shown independently by Zhang. 
On the other hand, we prove the modified conjecture asserting their deformation equivalence. 
Our proof of nonequivalence combines a categorical Enriques-K3 
correspondence with the Hodge theory of categories. 
Along the way, we obtain a categorical description of the periods of Gushel--Mukai varieties, 
which we use to resolve a conjecture of Kuznetsov and the second author on the birational categorical Torelli problem, 
as well as to give a simple proof of a theorem of Debarre and Kuznetsov on the fibers of the period map.   
Our proof of deformation equivalence relies on results of independent interest about obstructions to enhancing group actions on categories. 
\end{abstract}

\maketitle

%%%%%%%%%%%%%%%%%%%%%%%%%%%%%%%%%%%%%%%%%%%%%%%%%%%%%%

\section{Introduction}
\label{section-intro} 

We work over the complex numbers. 
If $V$ is a Fano\footnote{Fano varieties are smooth by convention in this paper.} 
threefold of Picard number $1$ with ample generator 
$H \in \Pic(V)$, then the \emph{index} of $V$ is the integer $i$ such that 
$K_V = -iH$ and the \emph{degree} is the integer $d = H^3$. 
The classification of Fano threefolds  \cite{iskovskikh} 
shows that if $i = 4$ then $V \cong \bP^3$, 
if $i = 3$ then $V$ is a quadric, if $i = 2$ then $1 \leq d \leq 5$, 
and if $i = 1$ then $d$ is even, $d \neq 20$, and $2 \leq d \leq 22$. 
Moreover, for any pair $(i, d)$ satisfying these restrictions, 
there is a unique and explicitly described deformation class of Fano threefolds with these numerics. 
For instance:
\begin{itemize}
\item Fano threefolds $Y$ of Picard number $1$, index $2$, and degree $2$ are quartic double solids, i.e. double covers $Y \to \bP^3$ branched along a quartic surface. 
\item Fano threefolds $X$ of Picard number $1$, index $1$, and degree $10$ are Gushel--Mukai (GM) threefolds, i.e. either intersections 
$X = \Gr(2,5) \cap \bP^7 \cap Q$ of the Grassmannian $\Gr(2,5)$ with a codimension $2$ linear subspace and a quadric in the Pl\"{u}cker embedding (in which case $X$ is called \emph{ordinary}), or double covers $X \to \Gr(2,5) \cap \bP^6$ of a codimension $3$ linear section branched along a quadric section 
(in which case $X$ is called \emph{special}). 
\end{itemize} 

There are some curious classical ``coincidences'' between the families with numerical invariants $(i = 2, d)$ and $(i=1, 4d +2)$. 
For instance, the rationality of a (generic) Fano threefold in a given family is preserved under this correspondence (see \cite{beauville-luroth}). 
At the level of Hodge theory, the dimensions of the intermediate Jacobians on each side also match, except for $d = 1$ (see Remark~\ref{remark-degree-1} below). 

\subsection{Kuznetsov's conjecture} 
In \cite{kuznetsov-fano3fold} Kuznetsov suggested an intrinsic explanation for 
these coincidences, in terms of bounded derived categories of coherent sheaves. 
If $Y$ is a Fano threefold of Picard number $1$ and index~$2$, there is a semiorthogonal decomposition 
\begin{equation*}
\Db(Y) = \llangle \Ku(Y), \cO_Y, \cO_Y(H) \rrangle  
\end{equation*} 
where $\Ku(Y)$ is the subcategory --- now known as the \emph{Kuznetsov component} --- defined by 
\begin{equation}
\label{KuY}
\Ku(Y) = \set{ F \in \Db(Y) \st \Ext^\bullet(\cO_Y, F) = \Ext^\bullet(\cO_Y(H), F) = 0}. 
\end{equation} 
If $X$ is a Fano threefold of Picard number $1$, index $1$, and degree $d$, 
then $d = 2 g - 2$ for an integer $g \geq 2$ known as the \emph{genus} of $X$. 
If $g \geq 6$ is even, 
there is a semiorthogonal decomposition 
\begin{equation*}
\Db(X) = \llangle \Ku(X) , \cE, \cO_X \rrangle 
\end{equation*} 
where $\cE$ is a canonical exceptional rank $2$ vector bundle on $X$ constructed by Mukai (see \cite[Theorem~6.2]{BLMS}), 
and  
\begin{equation}
\label{KuX} 
\Ku(X) = \set{ F \in \Db(X) \st \Ext^\bullet(\cO_X, F) = \Ext^\bullet(\cE, F) = 0 }. 
\end{equation} 
For example, if $X$ is a GM threefold then $g = 6$ and $\cE$ is the pullback of the tautological rank $2$ subbundle on $\Gr(2,5)$. 

Kuznetsov conjectured the categories $\Ku(Y)$ for $Y$ of index $2$ and degree $d$ 
can be realized as $\Ku(X)$ for $X$ of index $1$ and degree $4d+2$. 
More precisely, 
let $\cM^i_d$ denote the moduli stack of Fano threefolds of Picard number $1$, index $i$, and degree $d$; 
this is a smooth irreducible stack of finite type (see \cite{modulifano3}). 

\begin{conjecture}[\cite{kuznetsov-fano3fold}]
\label{conjecture-fano3fold}
For $1 \leq d \leq 5$ 
there exists a correspondence $\cZ \subset \cM^2_d \times \cM^1_{4d+2}$ that is dominant over each factor and such that for any point $(Y, X) \in \cZ$ there is an equivalence of categories  
$\Ku(Y) \simeq \Ku(X)$. 
\end{conjecture}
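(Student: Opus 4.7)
The plan is to construct, for each $1 \le d \le 5$, an explicit correspondence $\cZ$ realizing one side as a Bridgeland-type moduli space inside the Kuznetsov component of the other, and then to upgrade this to a Fourier--Mukai equivalence. The strategy extends Kuznetsov's original approach in the ``classical'' cases $d = 3,4,5$, but aims to produce a more conceptual construction covering all degrees uniformly, including the remaining cases $d = 1, 2$.

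First, I would fix the numerical match. Both $\Ku(Y)$ and $\Ku(X)$ are known to be $2$-Calabi--Yau of ``K3 type,'' so their numerical Grothendieck groups $\Knum(\Ku(Y))$ and $\Knum(\Ku(X))$ carry Mukai-style pairings of the same rank and signature, and a short lattice calculation should produce, for each $d$, a distinguished Mukai vector $v_d \in \Knum(\Ku(X))$ whose moduli of stable objects is expected to recover $Y$, with the symmetric statement in the reverse direction. This numerical matching is the skeleton on which the geometric correspondence must be built.

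Next, I would carry out the moduli construction in families. Using the stability conditions on $\Ku(X)$ provided by \cite{BLMS}, form the moduli space $\cM_{X, v_d}$ of $\sigma$-stable objects of class $v_d$, and show it is a smooth projective Fano threefold of Picard rank $1$, index $2$, and degree $d$, so that $\cM_{X, v_d} \in \cM^2_d$. Relativizing this construction over $\cM^1_{4d+2}$ yields a morphism of stacks whose graph defines a candidate correspondence $\cZ$; dimension counts, combined with the smoothness and irreducibility of $\cM^i_d$ \cite{modulifano3}, then give dominance over both factors. I would also construct a parallel map in the opposite direction $(Y \mapsto X)$ as a moduli of objects in $\Ku(Y)$, and check the two constructions are mutually inverse generically, thereby confirming the correspondence is genuinely bi-dominant rather than a one-way fibration.

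Finally, I would upgrade this to an equivalence of categories at each point of $\cZ$. The universal family on $\cM_{X, v_d} \times X$ defines a Fourier--Mukai kernel and a functor $\Db(\cM_{X, v_d}) \to \Db(X)$; one shows it restricts to a functor $\Ku(\cM_{X, v_d}) \to \Ku(X)$ and verifies it is an equivalence, using the $2$-Calabi--Yau property to reduce fully-faithfulness to a narrow range of $\Hom$-computations and a semiorthogonal argument for essential surjectivity. The main obstacle will be precisely this last step. A priori, finer invariants of $\Ku(Y)$ and $\Ku(X)$ beyond the numerical Grothendieck group --- such as their Hodge-theoretic Mukai lattices or their topological $\rK$-theories $\Ktop$ --- could fail to be isometric, obstructing the existence of \emph{any} equivalence even when the numerical and moduli-theoretic data line up. The heart of the proof is therefore a careful analysis of these refined invariants uniformly across the families $\cM^2_d$ and $\cM^1_{4d+2}$, and identifying whether they do or do not match in each of the remaining cases is where the delicate work lies.
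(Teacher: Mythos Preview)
The statement you are trying to prove is a \emph{conjecture}, not a theorem, and the paper does not prove it --- in fact, the paper \emph{disproves} it in the two cases you single out as ``remaining.'' For $d=1$, Remark~\ref{remark-degree-1} shows $h^{1,2}(Y)=21$ while $h^{1,2}(X)=20$, so $\HH_1(\Ku(Y)) \not\cong \HH_1(\Ku(X))$ and no equivalence can exist for any $(Y,X)$. For $d=2$, Theorem~\ref{main-theorem} proves that $\Ku(Y)$ and $\Ku(X)$ are \emph{never} equivalent for any quartic double solid $Y$ and any GM threefold $X$, so no nonempty correspondence $\cZ$ with the required property exists, let alone a dominant one. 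Your proposal therefore cannot succeed as stated.

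There is also a concrete error in your setup that explains where the argument would break down. You assert that ``both $\Ku(Y)$ and $\Ku(X)$ are known to be $2$-Calabi--Yau of K3 type.'' This is false precisely in the cases $d=1,2$ you aim to handle. For $d=2$, Proposition~\ref{proposition-enriques-examples} shows that $\Ku(Y)$ and $\Ku(X)$ are \emph{Enriques categories}: their Serre functors are $\tau \circ [2]$ with $\tau$ a nontrivial involution, not $[2]$. The numerical and lattice calculations you propose would thus be carried out for the wrong type of category, and the ``refined invariants'' you anticipate as the delicate point --- the Mukai Hodge structures of the associated CY2 covers --- are exactly what the paper uses to prove nonequivalence. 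In short, the obstacle you flag in your final paragraph is not merely delicate but fatal: the Hodge-theoretic invariants genuinely fail to match, and this is the content of the paper's main theorem.
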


\begin{remark}
In this paper, we work with enhanced triangulated categories (see \S\ref{conventions}), 
so by an equivalence 
$\Ku(Y) \simeq \Ku(X)$ we mean an equivalence of such enhanced categories; 
this amounts to $\Ku(Y) \simeq \Ku(X)$ being given by a Fourier--Mukai kernel on $Y \times X$. 
Technically, Kuznetsov's conjecture as stated in \cite{kuznetsov-fano3fold} only requires the existence of a triangulated equivalence $\Ku(Y) \simeq \Ku(X)$, but a different conjecture of Kuznetsov \cite[Conjecture 3.7]{kuznetsov-HPD} implies any such equivalence is of Fourier--Mukai type. 
In fact, in the cases of interest in this paper, the Fourier--Mukai type conjecture was recently proved in 
\cite{LPZ-FM}, so our assumption that all equivalences are enhanced is harmless. 
\end{remark} 

As evidence, Kuzntesov \cite{kuznetsov-fano3fold} proved Conjecture~\ref{conjecture-fano3fold} for $d =3,4,5$. 

\begin{remark}
\label{remark-degree-1} For $d = 1$ the conjecture fails, for the following reason.
If $V$ is a Fano threefold and $\cA \subset \Db(V)$ is any semiorthogonal component defined as the orthogonal to an exceptional sequence, then the HKR isomorphism and additivity of Hochschild 
homology gives an isomorphism $\HH_{1}(\cA) \cong \rH^{1,2}(V)$.  
Thus, for $(Y, X) \in \cM^2_d \times \cM^1_{4d+2}$, a necessary condition for the existence of an equivalence $\Ku(Y) \simeq \Ku(X)$ is that $h^{1,2}(Y) = h^{1,2}(X)$. 
This equality holds for $d = 2,3,4,5$, but it fails for $d = 1$, as then 
$h^{1,2}(Y) = 21$ while $h^{1,2}(X) = 20$. 
In fact, for $d = 1$ there is some subtlety in even defining $\Ku(X)$ --- note that in~\eqref{KuX} we excluded the case $g = 4$ --- but this argument applies 
to any possible definition of $\Ku(X)$. Instead, in an article in preparation Kuznetsov and Shinder show that $\Ku(X)$ and $\Ku(Y)$ are related by a degeneration and resolution: there exists a smooth proper family of categories with generic fiber $\Ku(X)$ and special fiber a categorical resolution of $\Ku(Y)$ for a nodal $Y$. 
\end{remark} 

Bernardara and Tabuada \cite{bernardara} observed that Conjecture~\ref{conjecture-fano3fold} 
also fails for $d = 2$, essentially for dimension reasons: 
the categories $\Ku(X)$ of GM threefolds vary in a $20$-dimensional family, while $\cM^2_2$ is only $19$-dimensional. 
Thus, if $\cZ \subset \cM^2_2 \times \cM^1_{10}$ is a correspondence parameterizing Fano threefolds with equivalent Kuznetsov components, then $\cZ$ does not dominate $\cM^1_{10}$. 
This left open the question of whether there could be such a correspondence dominating 
$\cM_2^2$, as suggested by the dimension count. 

\subsection{Main results} 
Our first main result says that, somewhat surprisingly, there does not even exist a nonempty 
correspondence $\cZ \subset \cM^2_2 \times \cM^1_{10}$ parameterizing equivalent 
Kuznetsov components, and thus Conjecture~\ref{conjecture-fano3fold} fails maximally for $d = 2$. 

\begin{theorem}
\label{main-theorem} 
Let $Y$ be a 
quartic double solid, and let $X$ be a GM threefold.
Then  $\Ku(Y)$ and $\Ku(X)$ are not equivalent.
\end{theorem}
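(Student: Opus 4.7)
The plan is to argue by contradiction. Assume an equivalence $\Ku(Y) \simeq \Ku(X)$ exists; since the categories are enhanced, this equivalence is of Fourier--Mukai type and hence induces an isomorphism between the polarized weight-zero Hodge structures $\tH(\Ku(Y))$ and $\tH(\Ku(X))$ on their topological K-theories. The task is therefore to show that these ``Mukai--Hodge'' structures are not isomorphic.

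The first main step is to establish a categorical Enriques--K3 correspondence on the $Y$ side. One constructs a $2$-Calabi--Yau (``K3-type'') category $\tcK$ equipped with a $\bZ/2$-action whose invariants recover $\Ku(Y)$, compatibly with Mukai--Hodge structures. The natural candidate for the involution is the covering involution of $Y \to \bP^3$, which fixes both $\cO_Y$ and $\cO_Y(H)$ and therefore descends to an autoequivalence of $\Ku(Y)$. Granting this correspondence, $\tH(\Ku(Y))$ inherits the characteristic features of an Enriques-type Mukai--Hodge structure---most crucially, a discrete $\bZ/2$-invariant (a nonzero $2$-torsion class, or equivalently a nontrivial $\bZ/2$-gerbe class on the polarized lattice) arising from the double-cover structure, which is absent from the underlying K3 Mukai--Hodge structure $\tH(\tcK)$. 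By contrast, $\tH(\Ku(X))$ for a GM threefold is already well understood: it is a genuine degree-$10$ K3-type weight-zero Hodge structure, torsion-free, and carries no discrete invariant of Enriques type. Comparing the two yields $\tH(\Ku(Y)) \not\cong \tH(\Ku(X))$, giving the desired contradiction. As a byproduct, the same comparison of Mukai--Hodge structures produces a short derivation of the Debarre--Kuznetsov description of the fibers of the GM period map, since those fibers can be read off directly from $\tH(\Ku(X))$.

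The main obstacle, and the heart of the argument, is the first step: rigorously realizing $\Ku(Y)$ as a $\bZ/2$-equivariantization of a K3-type category at the enhanced (dg) level, and verifying that its Mukai--Hodge structure takes the expected Enriques form. This demands lifting the covering involution to a strict dg-action, for which possible obstructions must be dispatched using the ``enhanced group actions'' theory developed later in the paper, together with careful tracking of signs in the gluing data that distinguish the Enriques from the K3 Hodge structure. Once this categorical double cover is in place, separating the two Mukai--Hodge structures is essentially a lattice-theoretic computation.
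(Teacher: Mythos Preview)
Your proposal rests on a false premise: you claim that $\tH(\Ku(X))$ for a GM threefold $X$ is ``a genuine degree-$10$ K3-type weight-zero Hodge structure \ldots\ [which] carries no discrete invariant of Enriques type,'' and you plan to distinguish it from an Enriques-type structure on $\tH(\Ku(Y))$. But $\Ku(X)$ is itself an Enriques category, not a K3 category --- its Serre functor is $\tau \circ [2]$ for a nontrivial involution $\tau$, just as for $\Ku(Y)$ (see Proposition~\ref{proposition-enriques-examples}). The two categories have identical Hochschild homology and numerical invariants, and both admit CY2 covers via the same mechanism. Any cohomological or lattice-theoretic invariant that detects ``Enriques type'' versus ``K3 type'' will not separate them; this is precisely why the problem is subtle.

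The paper's argument proceeds in the opposite direction from what you suggest. Rather than comparing $\Ku(Y)$ and $\Ku(X)$ directly, one uses Lemma~\ref{lemma-K3-cover-equivalence} to pass to the CY2 covers on \emph{both} sides, obtaining a $\bZ/2$-equivariant equivalence $\Db(Y_{\br}) \simeq \Ku(X^{\op})$ of K3 categories, where $Y_{\br}$ is the branch quartic and $X^{\op}$ is the opposite GM variety (a fourfold or a surface). The resulting $\bZ/2$-equivariant Hodge isometry of Mukai lattices is then analyzed using the explicit description of the invariant sublattices (Lemma~\ref{lem:Z2invariantlattice} and Proposition~\ref{proposition-tH-A12}). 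The contradiction comes not from any coarse torsion invariant, but from a genuinely arithmetic constraint on GM periods: when $X$ is ordinary, the isometry would force the period of the GM fourfold $X^{\op}$ to lie in the divisor $\cD_4$, which is excluded by Theorem~\ref{theorem-image-period-morphism}; when $X$ is special, one either reduces to the ordinary case or argues via the elementary Lemma~\ref{lem:GMK3periods} on Picard lattices of GM surfaces. The ``enhanced group actions'' machinery you invoke is not needed for this direction --- it is used only for the deformation-equivalence result, Theorem~\ref{main-theorem-2}.
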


\begin{remark}
This result was also recently shown by Zhang \cite{Zhang}, via a completely different method, using uniqueness of (Serre-invariant) Bridgeland stability conditions and moduli spaces of stable objects.
\end{remark} 

In view of the failure of Conjecture~\ref{conjecture-fano3fold} for $d = 2$, 
Kuznetsov suggested a weakening of the conjecture, which asserts that the categories $\Ku(Y)$ and $\Ku(X)$
are ``deformation equivalent''. 
Theorem~\ref{main-theorem} can be thought of as a negative result in this direction, as 
the simplest way the modified conjecture could be true is if $\Ku(Y) \simeq \Ku(X)$ for some 
$(Y, X) \in \cM^2_2 \times \cM^1_{10}$. 
Nonetheless, our second main result confirms Kuznetsov's modified conjecture. 

\begin{theorem}
\label{main-theorem-2}
There exists a smooth pointed curve $(B, o)$ and a smooth proper $B$-linear category $\cC$ such that: \begin{enumerate}
\item The fiber $\cC_o$ is equivalent to $\Ku(Y)$ for a quartic double solid $Y$. 
\item For $b \in B \setminus \{ o \}$, the fiber $\cC_b$ is equivalent to $\Ku(X_b)$ for a 
GM threefold $X_b$. 
\end{enumerate} 
\end{theorem}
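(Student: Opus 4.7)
The plan is to realize both $\Ku(Y)$ and $\Ku(X_b)$ as Enriques-type categories, namely as $\bZ/2$-equivariantizations of K3-type categories equipped with an involution, and then construct the desired deformation at the level of these K3 categories with group action. This approach is motivated by the categorical Enriques--K3 correspondence that already underlies Theorem~\ref{main-theorem}, which on both sides presents the Kuznetsov component as an equivariantization of a K3-type category with $\bZ/2$-action.

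I would first construct a smooth proper $B$-linear category $\cD$ over a smooth pointed curve $(B, o)$ whose fiber at $o$ is a K3-type category associated to a quartic double solid (for instance related to the derived category of the branch quartic K3 surface) and whose generic fibers are K3-type categories associated to GM threefolds (for instance arising from Kuznetsov components of ambient GM fourfolds into which the threefolds embed as hyperplane sections). One expects such a common deformation family to exist because on both sides the underlying K3-type categories have matching numerical invariants --- in particular, matching Mukai lattices and matching Hochschild homology dimensions, compatible with the equality $\rh^{1,2}(Y) = \rh^{1,2}(X)$ recorded in Remark~\ref{remark-degree-1}.

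The central technical step is to enhance the fiberwise covering involutions to a single $B$-linear $\bZ/2$-action $\sigma$ on $\cD$. A pointwise collection of actions on the fibers does not in general globalize, and the obstruction to doing so lives in a suitable Hochschild-type cohomology of the relative category. This is precisely the kind of obstruction controlled by the general results on enhanced group actions on categories developed earlier in the paper, which I would invoke to show that the obstruction vanishes. Granting $\sigma$, one forms the $B$-linear equivariantization $\cC = \cD^{\bZ/2}$, which inherits smoothness and properness over $B$.

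Finally, identifying the fibers of $\cC$ via the Enriques--K3 correspondence applied fiberwise yields $\cC_o \simeq \Ku(Y)$ for a quartic double solid $Y$ and $\cC_b \simeq \Ku(X_b)$ for a GM threefold $X_b$ when $b \neq o$. The main obstacle is the combination of constructing $\cD$ with the required specializations and verifying that the $\bZ/2$-action extends as a $B$-linear action; in particular, the obstruction analysis for enhanced group actions is the essential new ingredient, without which a naive fiberwise construction cannot assemble into the desired smooth family $\cC$.
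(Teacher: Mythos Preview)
Your high-level strategy matches the paper's: both sides are realized as $\bZ/2$-invariants of K3 categories, one builds a family $\cD$ of such K3 categories over a curve, equips it with a relative $\bZ/2$-action, and takes invariants. The paper even uses honest K3 surfaces on both sides (for a \emph{special} GM threefold $X$, the CY2 cover is $\Db(X^{\op})$ for the GM K3 surface $X^{\op}$, not a fourfold Kuznetsov component), so $\cD = \Dperf(\cS)$ for a family $\cS \to B$ of degree-$10$ K3 surfaces specializing at $o$ to a quartic $S$ containing a line.

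However, you have misidentified where the real work lies. You frame the central technical step as globalizing the ``fiberwise covering involutions'' via obstruction theory, but there is no such consistent family of fiberwise involutions to begin with. The two residual $\bZ/2$-actions on $\Db(S)$ --- one coming from its realization as a quartic branch locus, the other from its realization as a degenerate GM surface --- are \emph{not} conjugate. Indeed, if they were, the invariant categories would be equivalent and one would obtain $\Ku(Y)\simeq\Ku(X)$, contradicting Theorem~\ref{main-theorem}; this is exactly the mechanism of the second proof in \S\ref{section-proof-main-theorem}. So the naive relative GM involution $\Phi^{\mathrm{GM}}_{\cS/B}$ restricts at $o$ to the wrong autoequivalence, and no amount of obstruction theory fixes that.

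The paper's key new idea is to \emph{modify} the relative autoequivalence by an extra spherical twist $\rT_{i_{o*}\cO(-D)}$ around an object supported entirely on the special fiber. This changes nothing on the generic fibers but alters the action at $o$, and the substantive content of the proof (Proposition~\ref{prop:conjugateZ2subgroups}, proved via a delicate analysis of slope-stable rank-$2$ bundles on $S$) is that the resulting involution $\rT_{\cO(-D)}^2\Phi^{\mathrm{GM}}$ on $\Db(S)$ is conjugate to $\Phi^{\mathrm{quartic}}$. Only once this is in hand does the obstruction theory of \S\ref{section-group-actions-cats} enter, and then merely as a final step to upgrade the resulting $1$-categorical involution to an $\infty$-categorical $\bZ/2$-action over an \'etale neighborhood of $o$.
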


We expect this result 
%Theorem~\ref{main-theorem-2} 
to be a useful tool for relating Bridgeland moduli spaces of objects in Kuznetsov components of quartic double solids and GM threefolds. 

\begin{remark}
See \S\ref{section-linear-categories} for a summary on $B$-linear categories. In general, a $B$-linear category should be thought 
of as a ``family of categories parameterized by $B$''; there is a well-behaved notion of base change for such 
categories, which in particular gives rise to a $\kappa(b)$-linear fiber category $\cC_b$ for any $b \in B$. 
Theorem~\ref{main-theorem-2} thus informally says that the Kuznetsov components of 
GM threefolds smoothly specialize to those of quartic double solids. 
\end{remark}

Our method of proof of Theorem~\ref{main-theorem} naturally leads to our third main result, concerning the ``categorical Torelli problem''. 
Namely, the intermediate Jacobian of a Fano threefold $X$ is determined by its Kuznetsov component  \cite{IHC-CY2}, and hence the association $X \mapsto \Ku(X)$ can be thought of as a categorical lift of the period map. 
The categorical Torelli problem then asks to what extent $X$ is determined by $\Ku(X)$. 
Positive answers are known in many situations; see \cite{categorical-torelli-survey} for a recent survey and references. 
One particularly interesting open case is that of 
GM threefolds, for which the $3$-dimensional case of a conjecture of Kuznetsov and the second author predicts the following. 

\begin{conjecture}[{\cite[Conjecture 1.7]{categorical-cones}}] 
\label{conjecture-birational-GM}
If $X_1$ and $X_2$ are GM threefolds such that there is an equivalence $\Ku(X_1) \simeq \Ku(X_2)$, 
then $X_1$ and $X_2$ are birational. 
\end{conjecture} 

We note that by the duality conjecture \cite[Conjecture 3.7]{GM-derived} proved in \cite{categorical-cones}, there are indeed $2$-dimensional families of birational GM threefolds with equivalent Kuznetsov component. 
More precisely, \cite{DebKuz:birGM} introduces a notion of \emph{period partnership} and \emph{duality} for GM varieties (see \S\ref{subsection-application-periods} for definitions), shows that these relations imply birationality, and explicitly describes the locus of period partners and duals of a given GM variety in terms of an associated EPW sextic (the locus being $2$-dimensional for a GM threefold), while the duality conjecture implies that the Kuznetsov components of period partners or duals are equivalent. 
We show that this is in fact the \emph{only} way for GM threefolds to have equivalent Kuznetsov components, and therefore resolve Conjecture~\ref{conjecture-birational-GM} while simultaneously computing the fiber of the ``categorical period map'': 

\begin{theorem}
\label{main-theorem-3} 
Let $X_1$ and $X_2$ be GM threefolds. 
Then $\Ku(X_1) \simeq \Ku(X_2)$ if and only if 
$X_1$ and $X_2$ are period partners or duals. 
In particular, if $\Ku(X_1) \simeq \Ku(X_2)$, then $X_1$ and $X_2$ are birational. 
\end{theorem} 

One of the appeals of Theorem~\ref{main-theorem-3} is that the expected corresponding result for the ordinary period map, i.e. with Kuznetsov components replaced by intermediate Jacobians, is currently unknown. 
This illustrates the utility of the extra structure provided by working categorically. 

\begin{remark}
Under a genericity assumption, the birationality of $X_1$ and $X_2$ above was recently shown in \cite{categorical-torelli-GM3}, via a completely different method, and without the genericity assumption in the upcoming \cite{JLZ:Brill-Noether}.
\end{remark} 

The proofs of Theorems~\ref{main-theorem},~\ref{main-theorem-2}, and~\ref{main-theorem-3} involve ideas of independent interest, sketched below. 

\subsubsection*{Categorical Enriques-K3 correspondence} The Kuznetsov components of quartic double solids and GM threefolds are \emph{Enriques categories}, in the sense that their Serre functors are of the form $\tau \circ [2]$ where $\tau$ is a nontrivial involution generating a $\bZ/2$-action. 
To any Enriques category, there is an associated \emph{2-Calabi--Yau (CY2) cover}, defined as the invariant category for the $\bZ/2$-action. 
By \cite{cyclic-covers}, if $Y \to \bP^3$ is a quartic double solid with branch locus a quartic K3 surface $Y_{\br} \subset \bP^3$, 
then the CY2 cover of $\Ku(Y)$ is $\Db(Y_{\br})$, while if $X$ is a GM threefold, the CY2 cover of $\Ku(X)$ is the Kuznetsov component of the ``opposite'' GM variety $X^{\op}$ 
(a Fano fourfold if $X$ is ordinary or a K3 surface if $X$ is special, see Definition~\ref{definition-opposite}); these CY2 categories are called \emph{K3 categories} because their Hochschild homology agrees with that of a K3 surface. 

As reviewed in \S\ref{section-enriques}, the CY2 cover of an Enriques category admits a \emph{residual $\bZ/2$-action}, which should be thought of as an analogue of the covering involution of a K3 surface over an Enriques surface; in the case of $\Db(Y_{\br})$ and $\Ku(X^{\op})$ these actions can be described explicitly, see Theorem~\ref{theorem-K3-cover-examples}. 
One of our key observations is that two Enriques categories are equivalent if and only if 
their CY2 covers are $\bZ/2$-equivariantly equivalent (Lemma~\ref{lemma-K3-cover-equivalence}). 
This is useful as K3 categories are often easier to understand. 

\subsubsection*{Outline of the proof of Theorem~\ref{main-theorem}} In particular, 
Theorem~\ref{main-theorem} reduces to proving the nonexistence of a $\bZ/2$-equivariant equivalence 
$\Db(Y_{\br}) \simeq \Ku(X^{\op})$. 
To rule out such an equivalence, we study the induced $\bZ/2$-equivariant isometry $\tH(Y_{\br}, \bZ) \cong \tH(\Ku(X^{\op}), \bZ)$ between their Mukai Hodge structures, whose definitions are reviewed in \S \ref{mukai-HS}. This leads to a contradiction
to constraints on the 
periods of GM fourfolds when $X$ is ordinary (Theorem~\ref{theorem-image-period-morphism}), and
those of GM surfaces when $X$ is special (Lemma~\ref{lem:GMK3periods}). 

\subsubsection*{Enhanced group actions} 
The above discussion elided a subtlety about $\bZ/2$-actions. In general, if $G$ is a finite group, then there are several possible notions of an action of $G$ on a category $\cC$. Naively, one might consider a homomorphism $\phi$ from $G$ to the group of autoequivalences modulo isomorphisms of functors. 
However, more structure is needed to define a reasonable category $\cC^G$ of $G$-equivariant objects in $\cC$; namely, following Deligne \cite{Deligne:groupedestresses}, we need to specify suitably compatible isomorphisms of functors $\phi(g) \circ \phi(g') \cong \phi(g \cdot g)$.  This suffices if $\cC$ is an ordinary category, but if $\cC$ is triangulated then in general $\cC^G$ need not be (see \cite[Theorem 6.9]{elagin} for a sufficient condition). 

To correct this, we instead work with an enhanced triangulated category $\cC$ --- we use $\infty$-categorical enhancements, see \S\ref{conventions} --- and consider $\infty$-categorical group actions on $\cC$. 
Then there is a well-behaved category $\cC^G$ of invariants, but the price we pay is that it is a priori much harder to specify a group action of $G$ on $\cC$, as it requires an infinite hierarchy of data. At the first two levels, if $\rh \cC$ denotes the triangulated homotopy category of $\cC$, then an $\infty$-categorical action on $\cC$ determines both a naive $G$-action on $\rh \cC$ given by a homomorphism $\phi$ as above, as well as a $G$-action on $\rh \cC$ in the sense of Deligne; we call the former a \emph{$1$-categorical action} on $\cC$, and the latter a \emph{$2$-categorical action}. 
We study obstructions to and uniqueness of lifts of $1$- and $2$-categorical actions to $\infty$-categorical ones. 
In particular, we show that if $\cC$ satisfies a connectivity hypothesis on Hochschild cohomology (which holds for most  categories of interest), then given a $1$-categorical action there is a single obstruction to the existence of an $\infty$-categorical lift, and the set of lifts form a torsor over an explicit cohomology group (Corollary~\ref{corollary-obstruction-HH}); 
moreover, this obstruction and torsor are the exact same as those controlling $2$-categorical lifts. 
This in particular answers a question raised in \cite{elagin}. 
In the special case where $\cC$ is the derived category of a variety, 
the result is the following enhancement of \cite[Theorem~2.1]{beckmann-oberdieck}. 

\begin{theorem}
Let $X$ be a connected smooth proper variety over a field $k$. 
Let $G$ be a finite group with a group homomorphism $\phi$ to the group of autoequivalences of $\Db(X)$.
Then there is a canonical obstruction class $\ob(\phi) \in \rH^3(BG, k^{\times})$, where the $G$-action on $k^{\times}$ is trivial, such that an $\infty$-categorical lift of $\phi$ exists if and only if $\ob(\phi) = 0$, in which case the set of equivalence classes of such lifts is an $\rH^2(BG, k^{\times})$-torsor. 
\end{theorem}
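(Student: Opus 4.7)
The plan is to deduce this theorem as a direct specialization of our general result, Corollary~\ref{corollary-obstruction-HH}, to the enhanced category $\cC = \Db(X)$, combined with the known $2$-categorical obstruction theory due to Beckmann--Oberdieck.

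First I would verify the Hochschild-connectivity hypothesis required by Corollary~\ref{corollary-obstruction-HH}. Via the Hochschild--Kostant--Rosenberg decomposition, for $X$ smooth and proper one has
\[
\HH^n(\Db(X)) \cong \bigoplus_{p+q=n} \rH^p(X, \wedge^q T_X),
\]
which vanishes for $n<0$. In degree zero this reduces to $\rH^0(X, \cO_X)$, and connectedness and properness of $X$ force this to equal $k$. Thus $\Db(X)$ satisfies the connectivity hypothesis, with $\HH^0(\Db(X)) = k$.

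Next I would identify the obstruction and torsor groups. By Corollary~\ref{corollary-obstruction-HH}, the obstruction to lifting $\phi$ to an $\infty$-categorical action coincides with the obstruction to lifting it to a $2$-categorical action, and the set of $\infty$-categorical lifts is a torsor over the same cohomology group as in the $2$-categorical setting. The latter is precisely the content of \cite[Theorem~2.1]{beckmann-oberdieck}, which produces an obstruction $\ob(\phi) \in \rH^3(BG, k^\times)$ and a torsor structure over $\rH^2(BG, k^\times)$. The $G$-action on $k^\times$ is trivial because every $k$-linear autoequivalence of $\Db(X)$ acts as the identity on scalars --- these scalars are exactly the endomorphisms of the identity functor, namely $\HH^0(\Db(X)) = k$.

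Since the principal content has already been absorbed into Corollary~\ref{corollary-obstruction-HH} and into \cite{beckmann-oberdieck}, the main obstacle reduces to the bookkeeping task of identifying the abstract coefficient module produced by the general $\infty$-categorical obstruction theory with $k^\times$. This identification is precisely where the connectivity hypothesis pays off: it collapses the relevant truncation of the Hochschild complex to the constant object $\HH^0(\Db(X)) = k$ concentrated in degree zero, so that the group-cohomology computation of the obstruction and torsor takes place with coefficients in $k^\times$ as claimed. If one preferred a proof bypassing \cite{beckmann-oberdieck}, the substantive step would be to construct the obstruction cocycle directly from a $1$-categorical lift $\phi$, which is exactly the construction carried out in the general setting of Corollary~\ref{corollary-obstruction-HH}.
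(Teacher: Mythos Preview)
Your approach is essentially the paper's: the theorem appears in the introduction as the specialization of Corollary~\ref{corollary-obstruction-HH} to $\cC = \Db(X)$ over $S = \Spec(k)$, together with Remark~\ref{remark-connected-HH0} to identify $\HH^0(\cC/k)^\times$ with $k^\times$ carrying the trivial $G$-action.

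Two minor points. First, your appeal to HKR is unnecessary and, as stated, requires characteristic zero (or $\dim X < \characteristic k$), whereas the theorem is over an arbitrary field. The paper instead uses the elementary description $\cHH^*(X/k) \simeq \cHom(\cO_\Delta,\cO_\Delta)$ from Example~\ref{example-HHX} (see Example~\ref{example-scheme-connected}), which immediately yields $\HH^i(X/k)=0$ for $i<0$ and $\HH^0(X/k)\cong \rH^0(X,\cO_X)=k$ without any HKR-type hypothesis. Second, your detour through \cite{beckmann-oberdieck} is redundant: Corollary~\ref{corollary-obstruction-HH} already produces the obstruction class in $\rH^3(BG,\HH^0(\cC)^\times)$ and the torsor structure over $\rH^2(BG,\HH^0(\cC)^\times)$ directly, and the vanishing of negative Hochschild cohomology then identifies $2$-lifts with $\infty$-lifts. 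The paper cites \cite{beckmann-oberdieck} only to indicate that the present statement strengthens their result, not as an ingredient in the proof.
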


The complete result in Corollary~\ref{corollary-obstruction-HH} applies 
in the relative setting where we consider categories $\cC$ that are linear over a base scheme, instead of merely a field. 
An important technical result is that the vanishing of the obstruction mentioned above is an open condition in the \'{e}tale topology of the base (Proposition~\ref{proposition-etale-vanishing}). 

\subsubsection*{Outline of the proof of Theorem~\ref{main-theorem-2}} 
By our discussion above, if $X$ is a special GM threefold and $Y$ is a quartic double solid, then the CY2 covers of their Kuznetsov components are $\Db(X^{\op})$ and $\Db(Y_{\br})$, where $X^{\op}$ is a GM K3 surface and $Y_{\br}$ is a quartic K3 surface, and the Kuznetsov components can be recovered as the invariant categories for the residual $\bZ/2$-actions. 
The idea of the proof of Theorem~\ref{main-theorem-2} is thus to find a specialization of $X^{\op}$ to a quartic K3, with a $\bZ/2$-action that restricts on fibers to the residual $\bZ/2$-actions. 
To do so, we first construct such a specialization with a $1$-categorical action on the family of derived categories of the K3 surfaces, and then use the general results discussed above to lift this to an $\infty$-categorical action. 
Passing to invariant categories gives the category $\cC$ promised by Theorem~\ref{main-theorem-2}, which is smooth and proper by a general result (Proposition~\ref{proposition-invariants-smooth-proper}) that we prove. 

\subsubsection*{The role of derived algebraic geometry} We briefly explain the role played by derived algebraic geometry, more specifically stable $\infty$-categories, in this paper. We need the notion of a category linear over a base $B$, along with base change. Sometimes, one considers such categories as admissible $B$-linear subcategories of $\Dperf(X)$ for a scheme $X$ over $B$; this is e.g.~the approach taken in \cite{stability-families}. However, we know of no such embedding of the category $\cC$ in Theorem~\ref{main-theorem-2}.

Instead, as explained above $\cC$ is constructed as the invariant category for a $\bZ/2$-action on the derived category $\Dperf(\cS)$ of the total space $\cS$  of a family of K3 surfaces over $B$. 
A result of Elagin \cite[Corollary 6.10]{elagin} allows to construct from a $2$-categorical action on $\Dperf(\cS)$ a triangulated structure on the invariant category, but it does not come with a natural $B$-linear structure that satisfies base change. 
Instead, our results in Section~\ref{section-group-actions-cats} show that the $\bZ/2$-action on the triangulated $\Dperf(\cS)$ lifts to an action on its enhancement as a $\infty$-category; then the desired properties of the invariant category are automatic.

\subsubsection*{Categorical description of periods} 
As a byproduct of our proof of Theorem~\ref{main-theorem}, we obtain 
a categorical description of the periods of even-dimensional GM varieties. 
GM varieties are generalizations of GM threefolds to dimensions $2 \leq n \leq 6$, with similarly defined Kuznetsov components (see Definition~\ref{definition-GM} and~\eqref{DbW}). 
If $W$ is such a variety of dimension $4$ or $6$, then $\Ku(W)$ is canonically the CY2 cover of an Enriques category (generalizing the discussion above for $W = X^{\op}$), and hence carries a canonical residual $\bZ/2$-action. 
If $n = \dim(W)$, the period map assigns to $W$ the Hodge structure $\rH^n(W, \bZ)_0$ given as the 
orthogonal to the sublattice $\rH^n(\Gr(2,5), \bZ) \subset \rH^n(W, \bZ)$. 

\begin{proposition}
\label{proposition-GM-periods}
Let $W$ be a GM variety of dimension $n = 4$ or $6$. 
Let $\tH(\Ku(W), \bZ)_0$ denote
the orthogonal to 
the invariant sublattice $\tH(\Ku(W), \bZ)^{\bZ/2} \subset \tH(\Ku(W), \bZ)$ for the residual $\bZ/2$-action. 
Then there is an isometry of weight $2$ Hodge structures 
\begin{equation*}
\tH(\Ku(W), \bZ)_0 \cong \rH^n(W, \bZ)_0(\tfrac{n}{2}-1), 
\end{equation*} 
where $(\frac{n}{2}-1)$ on the right denotes a Tate twist. 
\end{proposition}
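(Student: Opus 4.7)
The plan is to obtain the claimed isometry by matching both sides through the middle cohomology of $W$, using the cohomological realization of the Mukai Hodge structure. First, the semiorthogonal decomposition of $\Db(W)$ involving $\Ku(W)$ and a Lefschetz exceptional collection of bundles pulled back from $\Gr(2,5)$, combined with additivity of topological K-theory and the Chern character, identifies $\tH(\Ku(W),\bZ)$ with the orthogonal complement inside the Mukai lattice of $W$ of the sublattice spanned by the Chern characters of the exceptional objects. After the Tate twist by $n/2-1$ which shifts the Hodge-structure weight from $n$ to $2$, this orthogonal complement contains $\rH^n(W,\bZ)_0$ as a primitive sub-Hodge-structure, with complement generated by algebraic classes pulled back from the Grassmannian.

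Next, use Theorem~\ref{theorem-K3-cover-examples} to make the residual involution on $\Ku(W)$ concrete. The Enriques--K3 correspondence identifies the invariant category with $\Ku(W^{\op})$, where $W^{\op}$ is the opposite GM variety of dimension $n-1$, and the construction of the involution is compatible with the ambient $\Gr(2,5)$, so that pullbacks from the Grassmannian are invariant. Consequently, the induced action on $\tH(\Ku(W),\bZ)$ fixes pointwise the sublattice arising from Grassmannian cohomology. A rank count --- using that the dimension of $\tH(\Ku(W),\bZ)^{\bZ/2}$ is controlled by the Hochschild homology of $\Ku(W^{\op})$ --- then forces the action on the complementary part to be $-1$, so that the $(-1)$-eigenspace is precisely the image of $\rH^n(W,\bZ)_0(n/2-1)$.

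The main obstacle will be pinning down the integral structure. At the rational level, the invariant sublattice is determined by its rank, signature, and Hodge type together with its compatibility with the Grassmannian classes, so a dimension count suffices. Promoting this to an integral isometry requires either an explicit Fourier--Mukai description of the residual involution --- which Theorem~\ref{theorem-K3-cover-examples} makes available --- to directly compute its action on the integral Mukai lattice, or a comparison of discriminant forms on the two sides. Verifying that the Tate twist exponent is exactly $n/2-1$ reduces to tracking the degree shifts through the Chern character, which is routine once the cohomological realization of $\tH(\Ku(W),\bZ)$ is fixed.
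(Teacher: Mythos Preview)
Your broad architecture is right: the content of the proposition is that the $\bZ/2$-invariant sublattice of $\tH(\Ku(W),\bZ)$ coincides with the canonical rank $2$ sublattice $A_1^{\oplus 2}$ coming from $\Gr(2,5)$, after which the isometry with $\rH^n(W,\bZ)_0(\tfrac{n}{2}-1)$ is already known (this is Proposition~\ref{proposition-primitive-cohomology}). You also correctly identify that the Grassmannian classes are invariant and that the integral refinement is the delicate point.

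However, your proposed rank count has a genuine gap. You want to control $\dim \tH(\Ku(W),\bZ)^{\bZ/2}$ via the Hochschild homology of the invariant category $\Ku(W)^{\bZ/2} \simeq \Ku(W^{\op})$. But the naive identity $\HH_*(\cC^{\bZ/2}) \cong \HH_*(\cC)^{\bZ/2}$ is \emph{false}: the Hochschild homology of an invariant category involves twisted sectors $\HH_*(\cC,\sigma)$ in addition to ordinary invariants. Concretely, $\HH_*(\Ku(W^{\op}))$ is concentrated in odd degrees (apart from $\HH_0 = \bC$), so it cannot be the $\bZ/2$-fixed part of $\HH_*(\Ku(W))$, which sits in even degrees. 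So this route does not give you the rank of the invariant lattice, and without it your argument does not force the action to be $-1$ on the complement.

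The paper sidesteps this entirely by a deformation argument you do not mention. One first shows the statement is constant in families (the residual involution extends to an automorphism of the local system $\tH(\Ku(\cW)/S,\bZ)$), and then checks it for a single \emph{special} $W$. There the residual involution is literally pushforward along the geometric covering involution of $W \to \Gr(2,5)\cap \bP^{n+3}$, so the invariants in $\rH^*(W,\bQ)$ are exactly the image of $\rH^*(\Gr(2,5),\bQ)$ (or of the linear section, in dimension $4$). This gives the rational rank count directly, and primitivity of $A_1^{\oplus 2}$ then upgrades it to the integral statement. Your proposal to compute the action of $\Phi_W[-1]$ on the Mukai lattice by hand could in principle replace this, but you would need to actually carry out that computation rather than appeal to Hochschild homology of the quotient.
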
 

Our main application, to Theorem~\ref{main-theorem-3}, is explained below. 
As another application, we give a simple proof of a recent result of Debarre and Kuznetsov \cite{DebKuz:periodGM}, which identifies the periods of even-dimensional GM varieties that are ``generalized partners or duals'' (Theorem~\ref{theorem-periods-DK}), and implies the period map factors through the moduli space of double EPW sextics. 

\begin{remark}
\label{remark-IJ}
In \cite{IHC-CY2}, canonical weight $0$ and $-1$ Hodge structures $\Ktop[0](\cA)$ and $\Ktop[1](\cA)$ are constructed for any admissible subcategory $\cA$ of the derived category of a smooth proper variety, which can be thought of as versions of even and odd degree cohomology of $\cA$. 
If $\cA = \Ku(W)$ for an even-dimensional GM variety, then $\Ktop[0](\cA)$ is up to Tate twist the Mukai Hodge structure. 
For many odd-dimensional Fano varieties (including GM varieties), if $\cA$ is taken to be 
an appropriate Kuznetsov component, then $\Ktop[1](\cA)$ recovers the middle Hodge structure of $W$ on the nose. 
In this way, the categorical description of periods in even dimensions is more 
subtle than in odd dimensions. 
\end{remark}

\subsubsection*{Outline of the proof of Theorem~\ref{main-theorem-3}} 
If $X_1$ and $X_2$ are GM threefolds with $\Ku(X_1) \simeq \Ku(X_2)$, 
then passing to CY2 covers we obtain an equivalence $\Ku(X_1^{\op}) \simeq \Ku(X_2^{\op})$ equivariant for the residual $\bZ/2$-actions. 
By a trick one can reduce to the case where $X_1$ and $X_2$ are ordinary, so that $X_1^{\op}$ and $X_2^{\op}$ are GM fourfolds. 
Then Proposition~\ref{proposition-GM-periods} implies an isometry of Hodge structures $\rH^4(X_1^{\op}, \bZ)_0 \cong \rH^4(X_2^{\op}, \bZ)_0$. 
Theorem~\ref{main-theorem-3} then follows by combining this with the factorization of the period map through the moduli space of double EPW sextics (mentioned above) and the injectivity of the period map for double EPW sextics (by Verbitsky's Torelli theorem). 
The moral of this argument is that passing to CY2 covers allows us to leverage Torelli theorems for hyperk\"{a}hler fourfolds. 

\subsection{Further conjectures and questions} 
We highlight several further directions 
suggested by our work. 

\subsubsection*{Birational geometry and intermediate Jacobians of the threefolds} 
Heuristic relations between derived categories and birational geometry \cite{kuznetsov-rationality} suggest that if $Y$ were a quartic double solid which is birational to a GM threefold $X$, then $\Ku(Y) \simeq \Ku(X)$. 
Together with Theorem~\ref{main-theorem}, this leads to: 

\begin{conjecture}
\label{conjecture-YX-not-birational}
Let $Y$ be a quartic double solid, and let $X$ be a GM threefold.
Then $Y$ is not birational to $X$. 
\end{conjecture}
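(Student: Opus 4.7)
The plan is to derive Conjecture~\ref{conjecture-YX-not-birational} from Theorem~\ref{main-theorem} by rigorously establishing the heuristic stated just above it: a birational equivalence $Y \dashrightarrow X$ should induce a Fourier--Mukai equivalence $\Ku(Y) \simeq \Ku(X)$. Assuming such a birational map exists, one first applies the Weak Factorization Theorem to write it as an alternating sequence of smooth blow-ups and blow-downs, with centers that are either points or smooth irreducible curves since $\dim Y = \dim X = 3$. At each blow-up step Orlov's blow-up formula decomposes the derived category of the blow-up in terms of the derived category of the base and of the blown-up center. The strategy is to track the Kuznetsov component through this sequence of operations via mutations against the ``trivial'' exceptional collections $\cO_Y, \cO_Y(H)$ on the $Y$-side and $\cO_X, \cE$ on the $X$-side, together with the exceptional objects supported on each exceptional divisor, so as to match $\Ku(Y)$ with $\Ku(X)$ inside a common derived category of iterated blow-ups.

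The hard part, essentially equivalent to Kuznetsov's general philosophy in~\cite{kuznetsov-rationality}, is proving that the Kuznetsov component is a birational invariant in this precise sense. After several blow-ups the derived categories $\Db(C)$ of the blown-up curves interact nontrivially with the admissible K3-type subcategories, and it is far from automatic that they decouple and cancel in pairs after all mutations have been performed. For the previously settled cases $d = 3, 4, 5$ of Conjecture~\ref{conjecture-fano3fold}, Kuznetsov's arguments proceed via an ambient variety (a quadric, cubic, or Grassmannian section) containing both Fanos, a crutch that is unavailable in the present setting and must be replaced by a more intrinsic mutation analysis.

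A complementary Hodge-theoretic fallback would exploit the Clemens--Griffiths criterion: the principally polarized intermediate Jacobian of a smooth projective threefold is a birational invariant up to summands that are Jacobians of smooth curves. By Remark~\ref{remark-IJ}, $(J(Y), \Theta_Y)$ and $(J(X), \Theta_X)$ are canonically encoded by the odd categorical topological K-theory of $\Ku(Y)$ and $\Ku(X)$. Combining this with the known irreducibility results for the intermediate Jacobians of quartic double solids (Tikhomirov) and of GM threefolds (Debarre--Iliev--Manivel), one would hope to directly obstruct the existence of a birational map. The main obstacle here is the ``modulo Jacobians of curves'' ambiguity in Clemens--Griffiths, which demands a uniform and delicate analysis of the singular loci of the two theta divisors across the entire moduli stacks $\cM^2_2$ and $\cM^1_{10}$, rather than just at generic members.
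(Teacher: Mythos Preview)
The statement in question is a \emph{conjecture}, not a theorem, and the paper does not prove it. What the paper does is show that Conjecture~\ref{conjecture-YX-not-birational} follows from Conjecture~\ref{conjecture-JY-not-JX} (that $J(Y) \not\cong J(X)$ as principally polarized abelian varieties): by Voisin's result \cite{quartic-double-irrational} the Clemens--Griffiths component of $J(Y)$ is all of $J(Y)$, so a birational map $Y \dashrightarrow X$ would force $J(Y) \cong J(X)$. That is the entire content of the paper's argument here; Conjecture~\ref{conjecture-YX-not-birational} remains open in general (the paper notes it is known for generic $X$ by \cite[Corollary 7.6]{DIM3fold}).

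Your proposal instead tries to prove the conjecture outright by deducing from a birational map an equivalence $\Ku(Y) \simeq \Ku(X)$, contradicting Theorem~\ref{main-theorem}. But the implication ``$Y$ birational to $X$ $\implies$ $\Ku(Y) \simeq \Ku(X)$'' is precisely the \emph{heuristic} the paper flags as unproven just before stating the conjecture. Your own write-up concedes this: you say the hard part is ``essentially equivalent to Kuznetsov's general philosophy'' and that after weak factorization ``it is far from automatic that [the pieces] decouple and cancel in pairs.'' That is not a proof sketch with a technical step to fill in; it is a restatement of an open problem. Likewise, your Hodge-theoretic fallback reproduces the paper's actual reduction to intermediate Jacobians, and you correctly identify the obstacle---controlling the ``modulo Jacobians of curves'' ambiguity across all of $\cM^2_2 \times \cM^1_{10}$---which is exactly why Conjecture~\ref{conjecture-JY-not-JX} is also left open. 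In short, neither branch of your proposal closes a gap; each terminates at a genuinely unresolved step that the paper deliberately records as conjectural.
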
 

Motivated by this, we also propose: 

\begin{conjecture}
\label{conjecture-JY-not-JX}
Let $Y$ be a quartic double solid, and let $X$ be a GM threefold.
Then the intermediate Jacobian $J(Y)$ is not isomorphic to $J(X)$ as a principally polarized abelian variety. 
\end{conjecture}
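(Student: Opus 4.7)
The plan is to distinguish $J(Y)$ and $J(X)$ as principally polarized abelian varieties via invariants of their theta divisors. Both have dimension $10$ (by Remark~\ref{remark-degree-1}, since $h^{1,2}(Y) = h^{1,2}(X)$ for $d=2$), and for generic members both have maximal Mumford--Tate group, so coarse invariants do not suffice. Classical work of Clemens--Griffiths realizes $J(Y)$ as a Prym variety $P(\widetilde{C}_Y/C_Y)$, where the \'etale double cover $\widetilde{C}_Y \to C_Y$ arises from a conic bundle structure on a blow-up of $Y$ and $C_Y$ is a plane curve of explicit degree. Debarre, Iliev, and Manivel give a similar Prym representation for $J(X)$, with discriminant curve $C_X$ of distinct genus and gonality from $C_Y$.

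The implementation applies Beauville's description of singular loci of Prym theta divisors: for $P = P(\widetilde{C}/C)$, the locus $\mathrm{Sing}(\Theta_P)$ decomposes into a \emph{stable} component, parametrizing line bundles $L$ on $\widetilde{C}$ with $h^0(L) \geq 2$ and norm $K_C$, together with finitely many \emph{exceptional} components, whose dimensions, numbers of irreducible components, and cohomology classes are intrinsic ppav invariants. Using the explicit $C_Y$ and $C_X$, I would compute these invariants in both cases and exhibit a numerical discrepancy---most plausibly in the dimension of $\mathrm{Sing}(\Theta)$ or in its minimal cycle class---that rules out any isomorphism $J(Y) \cong J(X)$ of principally polarized abelian varieties. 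As an independent cross-check, the Abel--Jacobi images of the Fano surfaces of conics on $Y$ and $X$ are canonically embedded in the respective ppavs, and their intrinsic invariants (canonical class, Euler characteristic, Chern data) are recoverable from the ppav structure and can be compared directly.

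The main obstacle is uniformity over \emph{all} pairs $(Y,X) \in \cM^2_2 \times \cM^1_{10}$: the chosen invariant of $\mathrm{Sing}(\Theta)$ may jump along the boundary of each moduli space, and one must rule out any common specialization. A moduli-theoretic argument analyzing semi-continuity of theta singularities under the period maps $\cM^2_2 \to \cA_{10}$ and $\cM^1_{10} \to \cA_{10}$ appears unavoidable. An alternative route, better aligned with the methods of the present paper, would be to upgrade the $\bZ/2$-equivariant isometry analysis of Mukai (i.e.~$\rK_0^{\rtop}$-type) Hodge structures used in Theorem~\ref{main-theorem} to an obstruction living at the level of weight-$1$ Hodge structures, which by Remark~\ref{remark-IJ} govern the intermediate Jacobians; concretely, one would seek a long exact sequence in equivariant topological $\rK$-theory that transfers the $\rK_0$-level obstruction to $\rK_1$, thereby deducing Conjecture~\ref{conjecture-JY-not-JX} from a refined categorical input parallel to that of Theorem~\ref{main-theorem}.
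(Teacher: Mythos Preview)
The statement you are attempting to prove is Conjecture~\ref{conjecture-JY-not-JX}, and the paper does \emph{not} prove it: it is explicitly stated as an open conjecture. The two short ``proofs'' following it in the paper are proofs that Conjecture~\ref{conjecture-JY-not-JX} \emph{implies} Theorem~\ref{main-theorem} and Conjecture~\ref{conjecture-YX-not-birational}, respectively, not proofs of the conjecture itself. The paper remarks only that the conjecture is known for a \emph{generic} GM threefold $X$ by \cite[Corollary 7.6]{DIM3fold}.

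Your proposal is likewise not a proof but a research outline, and you acknowledge this yourself when you write that a moduli-theoretic semicontinuity argument ``appears unavoidable'' and that the alternative $\rK$-theoretic route would require one to ``seek a long exact sequence'' with properties not yet established. Neither branch is carried to completion. On the first branch, the Prym/theta-singularity approach is essentially the method behind the generic result of \cite{DIM3fold}; the difficulty you identify---controlling jumps of $\mathrm{Sing}(\Theta)$ over the entire moduli space---is exactly why the full conjecture remains open, and nothing in your sketch addresses it. On the second branch, the idea of transferring the $\Ktop[0]$-level obstruction of Theorem~\ref{main-theorem} to $\Ktop[1]$ is appealing but speculative: the obstruction used in the proof of Theorem~\ref{main-theorem} is fundamentally about the residual $\bZ/2$-action on the CY2 cover, and there is no evident mechanism by which this descends to a constraint on the weight-$1$ Hodge structure of the Enriques category alone, which is what $J(Y) \cong J(X)$ concerns.
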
 

We note that $J(X)$ and $J(Y)$ are both $10$-dimensional. 
One could hope to address Conjecture~\ref{conjecture-JY-not-JX} by 
proving a description for the singular locus of the theta divisor of $J(X)$ in terms of Bridgeland moduli spaces for $\Ku(X)$. 
Our main interest in 
Conjecture~\ref{conjecture-JY-not-JX} is that it explains both Theorem~\ref{main-theorem} and Conjecture~\ref{conjecture-YX-not-birational}.  

\begin{proof}[Conjecture~\ref{conjecture-JY-not-JX} $\implies$ Theorem~\ref{main-theorem}]
By \cite[Lemma 5.30]{IHC-CY2} (cf.~Remark~\ref{remark-IJ}) an equivalence $\Ku(Y) \simeq \Ku(X)$ would imply an isomorphism $J(Y) \cong J(X)$ of principally polarized abelian varieties. 
\end{proof} 

\begin{proof}[Conjecture~\ref{conjecture-JY-not-JX} $\implies$ Conjecture~\ref{conjecture-YX-not-birational}]
For a Fano threefold $W$, the Clemens-Griffiths component $J_{\CG}(W)$ --- defined as the product of the principally polarized factors of $J(W)$ that are 
not Jacobians of curves --- is a birational invariant \cite{clemens-griffiths}. 
It follows from \cite{quartic-double-irrational} that $J_{\CG}(Y) = J(Y)$, so if $Y$ is birational to $X$ then we must have $J(Y) \cong J(X)$. 
\end{proof} 

\begin{remark}
Conjecture~\ref{conjecture-JY-not-JX}, and hence Conjecture \ref{conjecture-YX-not-birational}, are shown for a \emph{generic} GM threefold $X$ in \cite[Corollary 7.6]{DIM3fold}. 
\end{remark}

\subsubsection*{Loci of equivalent Kuznetsov components}
Conjecture~\ref{conjecture-fano3fold} motivates studying in general the 
locus where Kuznetsov components are equivalent in families of Fano varieties. 
We note the following consequence of work of Anel and To\"{e}n {\cite[Corollaire 3.3]{anel-toen}}. 

\begin{theorem}
\label{theorem-Z}
For $j = 1,2$, let $\cC_j$ be a smooth, proper, connected $S_j$-linear category for a scheme $S_j$. 
Then the locus $\cZ \subset S_1 \times S_2$ of points $(s_1, s_2) \in S_1 \times S_2$ such that $(\cC_1)_{s_1} \simeq (\cC_2)_{s_2}$ is a countable union of locally closed subspaces. 
\end{theorem}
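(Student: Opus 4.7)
The plan is to realize $\cZ$ as the image of a moduli space of $S_1 \times S_2$-linear Fourier--Mukai equivalences between the pullbacks of $\cC_1$ and $\cC_2$. I would let $p_j \colon S_1 \times S_2 \to S_j$ denote the two projections, and form the $S_1 \times S_2$-linear category
\[
\cD = (p_1^* \cC_1)^{\op} \otimes_{\cO_{S_1 \times S_2}} p_2^* \cC_2,
\]
which is again smooth and proper, since these properties are preserved under base change, opposites, and tensor products of linear categories. Over a geometric point $(s_1, s_2) \in S_1 \times S_2$, the objects of the fiber $\cD_{(s_1, s_2)}$ correspond, via the kernel formalism, to $\kappa(s_1, s_2)$-linear functors $(\cC_1)_{s_1} \to (\cC_2)_{s_2}$; crucially, by the Fourier--Mukai-type statement recalled in the remark after Conjecture~\ref{conjecture-fano3fold}, every equivalence of fibers is realized in this way.

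Next, I would invoke \cite[Corollaire 3.3]{anel-toen} to conclude that the moduli stack $\cM_\cD$ of perfect objects in $\cD$ is locally geometric and locally of finite presentation over $S_1 \times S_2$, and in particular decomposes as a disjoint union of quasi-compact open substacks indexed by a countable set of discrete invariants, e.g.\ numerical $\rK$-theory classes of the fiberwise kernel. Inside $\cM_\cD$ I would consider the substack $\cE \subset \cM_\cD$ parametrizing kernels whose associated functor is an equivalence. The next step is to show that $\cE$ is an open substack: since $\cD$ is smooth and proper, every kernel admits left and right adjoints given again by kernels, and the loci where the unit and counit of adjunction are isomorphisms should be open, because being an isomorphism between morphisms of perfect objects in a smooth proper family is an open condition on the base.

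Finally, $\cZ$ is by construction the set-theoretic image of $\cE$ under the structure map to $S_1 \times S_2$. Since $\cE$ inherits from $\cM_\cD$ a presentation as a countable disjoint union of quasi-compact algebraic stacks of finite presentation, Chevalley's theorem applied componentwise writes the image of each component as a constructible subset, hence a finite union of locally closed subspaces; taking the countable union over all components yields the desired description of $\cZ$. The hard part will be establishing the openness of $\cE$ in the relative setting in full generality, which requires a careful treatment of the kernel--adjoint formalism and of flat-and-proper base change for $\cD$ in order to spread out isomorphism conditions along the base. A secondary subtlety is verifying countability of the connected components of $\cM_\cD$, which in the current geometric setup (working over $\bC$ with $S_1, S_2$ of reasonable type) should follow from the fact that the discrete invariants classifying kernels, such as classes in the relative numerical $\rK$-theory of $\cD$, form a countable set.
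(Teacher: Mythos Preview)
Your outline is essentially the strategy of Anel--To\"en, and the paper itself does not give an independent argument: Theorem~\ref{theorem-Z} is stated simply as a consequence of \cite[Corollaire~3.3]{anel-toen}, which already contains the assertion about the equivalence locus. So your sketch is not so much an alternative proof as a reconstruction of the cited one.

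A few points to tighten. First, you mis-cite: the fact that the moduli stack of objects in a smooth proper linear category is locally geometric and locally of finite presentation is the main theorem of To\"en--Vaqui\'e, not \cite[Corollaire~3.3]{anel-toen}; the latter is precisely the endpoint you are trying to reach. Second, your justification that every equivalence of fibers arises from an object of $\cD$ appeals to the Fourier--Mukai remark after Conjecture~\ref{conjecture-fano3fold}, but that remark is specific to admissible subcategories of derived categories of varieties. The correct general input is that a smooth proper $k$-linear category is dualizable with dual $\cC^{\op}$, so $\Fun_k(\cC_1,\cC_2) \simeq \cC_1^{\op} \otimes_k \cC_2$; this is what makes the kernel description exhaustive in the abstract setting. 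Third, you never invoke the connectedness hypothesis. In Anel--To\"en it enters to guarantee that the automorphism sheaf of any equivalence is $\mathbf{G}_m$ (via $\HH^0 \cong \cO$), which is what ensures the stack of equivalences is a locally finite-type Artin stack with countably many components; this is where your ``secondary subtlety'' about countability is actually resolved.
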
 

In \S\ref{section-HH-lincats}, we review the notion of connectedness for a linear category, 
and explain a useful criterion (Corollary~\ref{corollary-connected}) for checking it based on \cite{kuznetsov-heights}. 
The criterion applies to the Kuznetsov components of many Fano varieties, including those of Fano threefolds of Picard number $1$\footnote{See \cite[\S6]{BLMS} for a (somewhat ad hoc) definition of the Kuznetsov component without restrictions on the degree.} and of cubic fourfolds \cite{kuznetsov-cubic}. 
By Theorem~\ref{theorem-Z} the locus where connected Kuznetsov components become equivalent in the product of two moduli spaces is a countable union of locally closed subspaces. 
We believe such loci deserve further study. For instance: 

\begin{question}
\label{question-equivalences-specialize}
When do equivalences between Kuznetsov components specialize, i.e. when are the locally closed 
subspaces above actually closed? 
\end{question}

This specialization property holds in the setting of Conjecture~\ref{conjecture-fano3fold}: for $d = 3,4,5$ by the results of \cite{kuznetsov-fano3fold} combined with the categorical Torelli theorems for 
Fano threefolds $Y$ of Picard number $1$, index $2$, and degree $d=3,4,5$; 
for $d = 2$ by Theorem~\ref{main-theorem}; and for $d = 1$ by Remark~\ref{remark-degree-1}. 
For cubic fourfolds, the results of \cite{stability-families} give a partial answer to Question~\ref{question-equivalences-specialize}: they imply that derived equivalences between Kuznetsov components of cubic fourfolds and K3 surfaces specialize, as they are given by $2$-dimensional moduli spaces of stable objects.

\subsection{Related work} 
There are alternative approaches to some of our results: 
Theorem~\ref{main-theorem} was proved independently by Zhang in \cite{Zhang}, based on a study of Bridgeland moduli spaces, while Theorem~\ref{main-theorem-2} was proved independently by Kuznetsov and Shinder in work in preparation \cite{KS-fano3fold} (see also \cite[\S5.4]{kuznetsov-sod-families}), based on a degeneration argument and a theory of ``absorption of singularities''. 
Our paper and these two use completely different methods, which we believe are interesting in their own right. 

After posting the first version of this paper, we learned that Zo\"{e} Schroot has obtained results similar to ours in \S\ref{section-group-actions-cats} on enhancing group actions on categories. 

We also note that some of the ideas in this paper are used in the upcoming \cite{moduli-enriques} to describe Bridgeland moduli spaces for Enriques categories, like the Kuznetsov components of GM threefolds or quartic double solids. 

\subsection{Organization of the paper}
In \S\ref{section-HH-lincats} we review the formalism of enhanced triangulated categories linear over a base scheme, as well as their Hochschild cohomology. 
In \S\ref{section-group-actions-cats} we discuss $\infty$-categorical group actions on (linear) categories, and in particular study the obstruction to lifting an action on the homotopy category to the $\infty$-level. 
In \S\ref{section-enriques} we explain the correspondence between Enriques categories and their CY2 covers. 
In \S\ref{section-Hodge-theory} we review Mukai Hodge structures and prove Proposition~\ref{proposition-GM-periods}. 
In \S\ref{section-torelli} we prove Theorem~\ref{main-theorem-3},  
in \S\ref{section-nonexistence} we prove Theorem~\ref{main-theorem}, and finally 
in \S\ref{section-deformation-equivalence} we prove Theorem~\ref{main-theorem-2}.

\subsection{Conventions} 
\label{conventions} 

Schemes are tacitly assumed to be quasi-compact and quasi-separated. 
A variety over a field $k$ is an integral scheme which is separated and of finite type over $k$. Fano varieties are smooth by convention. 

For a scheme $X$, $\Dperf(X)$ denotes the category of perfect complexes, $\Dqc(X)$ denotes the unbounded derived category of quasi-coherent sheaves, and $\Db(X)$ denotes the bounded derived category of coherent sheaves. (In fact in all cases where we consider $\Db(X)$ in this paper, $\Db(X) = \Dperf(X)$, so it is just a matter of notation.)
All functors are derived by convention. 
In particular, for a morphism $f \colon X \to Y$ of schemes we write $f_*$ and $f^*$ for the derived pushforward and pullback functors, and for $E, F \in \Dperf(X)$ we write $E \otimes F$ for the derived tensor product. 

For technical convenience all categories in the paper are considered as enhanced categories. 
More precisely, instead of $k$-linear triangulated categories we consider $k$-linear categories and functors between them in the sense of \cite{NCHPD}, i.e. we consider small idempotent-complete stable $\infty$-categories equipped with a module structure over $\Db(\Spec(k))$ (or equivalently, when $\characteristic(k) = 0$, DG categories over $k$). 
In particular, for a variety $X$ over $k$ we regard 
$\Db(X)$ as such a category; its homotopy category $\mathrm{h}\Db(X)$ is the classical triangulated derived category. 
We note that giving a semiorthogonal decomposition of $\Db(X)$ is equivalent to giving one of $\mathrm{h}\Db(X)$, and 
by the results of \cite{bzfn} and \cite{kuznetsov-bc} if 
$\cC \subset \Db(X)$ and $\ccD \subset \Db(Y)$ are semiorthogonal components of smooth proper varieties, then any functor $\cC \to \ccD$ of $k$-linear categories is induced by a Fourier--Mukai kernel on $X \times Y$. 

If $\cC$ is a $k$-linear category, we use the notation $\cHom_k(E,F) \in \Dqc(\Spec(k))$ for the mapping object between objects $E, F \in \cC$, see \S\ref{section-linear-categories}; 
in case $\cC \subset \Db(X)$ is a semiorthogonal component of the derived category of a variety, 
then $\cHom_k(E, F)$ coincides with the classical derived Hom complex $\mathrm{RHom}(E, F)$.

In several places, we also need the general notion of categories linear over a base scheme, briefly reviewed in \S\ref{section-linear-categories}. 

As stated in the introduction, our main results are over the complex numbers, and correspondingly in \S\ref{section-Hodge-theory}-\S\ref{section-deformation-equivalence} we work in this setting. 
However, in the foundational part of the paper,   \S\ref{section-HH-lincats}-\S\ref{section-enriques}, 
we work over more general bases, as explained there. 

\subsection{Acknowledgements} 
We thank Sasha Kuznetsov, Laura Pertusi, and Xiaolei Zhao for 
helpful discussions about this work. 
We are especially grateful to Bhargav Bhatt for explaining to us the proof of Lemma~\ref{lemma-etale-vanishing-ring} and suggesting Example~\ref{example-etale-vanishing}, and to Sasha Kuznetsov for carefully reading a preliminary version of this paper. We also thank the referee for their careful reading.  

%%%%%%%%%%%%%%%%%%%%%%%%%%%%%%%%%%%%%%%%%%%%%%%%%%%%%%

\section{Hochschild cohomology of linear categories} 
\label{section-HH-lincats}

In this section we discuss categories linear over a base scheme 
and their Hochschild cohomology. 
In \S\ref{section-linear-categories} we recall some of the basic formalism of linear categories, 
in \S\ref{section-HH} we define Hochschild cohomology and review some of its properties, and 
in \S\ref{section-connected} we define the notion of connectedness of a linear category (which appears as a hypothesis in Theorem~\ref{theorem-Z}) 
and explain a convenient method for checking it in practice. 

\subsection{Linear categories} 
\label{section-linear-categories} 
Fix a base scheme $S$. 
We use the notion of $S$-linear categories as in \cite{NCHPD}. 
Namely, 
the derived category $\Dperf(S)$ is a commutative algebra object in the 
$\infty$-category of small idempotent-complete stable $\infty$-categories, and an 
\emph{$S$-linear category} is a module object over $\Dperf(S)$; 
in particular, an $S$-linear category $\cC$ is equipped with an 
action functor $\Dperf(S) \times \cC \to \cC$. 

There is a well-behaved base change operation along any morphism $T \to S$ which produces a $T$-linear category 
\begin{equation*} 
\cC_T = \cC \otimes_{\Dperf(S)} \Dperf(T). 
\end{equation*}
This construction is compatible with semiorthogonal decompositions in the following sense. 
We say a semiorthogonal decomposition $\cC = \llangle \cC_1, \dots, \cC_n \rrangle$ is 
\emph{$S$-linear} if the $\Dperf(S)$-action preserves each of the components $\cC_i$, in which case $\cC_i$ inherits the structure of an $S$-linear category. Then for any morphism 
$T \to S$, there is an induced $T$-linear semiorthogonal decomposition 
\begin{equation*} 
\cC_T = \llangle (\cC_1)_T, \dots, (\cC_m)_T \rrangle . 
\end{equation*} 

For a point $s \in S$, we use the following terminology. The \emph{fiber} $\cC_s$ of $\cC$ over $s$ is the base change along $\Spec(\kappa(s)) \to S$. Similarly, if $F \colon \cC \to \cD$ is a functor between $S$-linear categories, its \emph{fiber} over $s$ is the functor $F_s \colon \cC_s \to \ccD_s$ obtained by base change. 

We also recall that for objects $E, F \in \cC$, there is a mapping object $\cHom_S(E,F) \in \Dqc(S)$ characterized by equivalences 
\begin{equation} 
\label{mapping-object}
\Map_{\Dqc(S)}(G, \cHom_S(E,F)) \simeq \Map_{\cC}(E \otimes G , F) 
\end{equation} 
where $\Map(-,-)$ denotes the space of maps in an $\infty$-category. 

\begin{example}
Let $f \colon X \to S$ be a morphism of schemes. 
Then $\Dperf(X)$ is naturally an $S$-linear category. 
By \cite{bzfn}, for $T \to S$ the base changed category $\Dperf(X)_T$ recovers $\Dperf(X_T)$, where $X_T$ is the derived base change (which agrees with the classical base change if e.g.~$X \to S$ is flat) of $X$ along $T \to S$.
Further, if $E, F \in \Dperf(X)$ then 
\begin{equation*}
\cHom_S(E, F) \simeq f_* \cHom_X(E,F) 
\end{equation*} 
where $\cHom_X(E,F) \in \Dqc(X)$ denotes the derived sheaf Hom on $X$. 
\end{example}

\subsection{Hochschild cohomology}
\label{section-HH}

Recall that given two $S$-linear categories $\cC$ and $\ccD$, 
there is a natural $S$-linear category $\Fun_S(\cC, \ccD)$ whose objects are the $S$-linear functors $\cC \to \ccD$. 

\begin{definition}
Let $\cC$ be an $S$-linear category. 
The \emph{sheafy Hochschild cohomology over $S$} of $\cC$ is 
\begin{equation}
\label{HH-definition}
\cHH^*(\cC/S) \coloneqq \cHom_S(\id_{\cC}, \id_{\cC}) \in \Dqc(S), 
\end{equation} 
i.e. the endomorphism object of $\id_{\cC}$ regarded as an object of $\Fun_S(\cC, \cC)$. 
The \emph{Hochschild cohomology over $S$} of $\cC$ is the derived global sections 
\begin{equation*}
\HH^*(\cC/S) \coloneqq \rR\Gamma(\cHH^*(\cC/S)) \in \mathrm{D}(\Mod_{\Gamma(S, \cO_S)})
\end{equation*} 
For $i \in \bZ$ we denote by $\cHH^i(\cC/S)$ the $i$-th cohomology sheaf of $\cHH^*(\cC/S)$, 
and by $\HH^i(\cC/S)$ the $i$-th cohomology module of $\HH^*(\cC/S)$. 
When $\cC = \Dperf(X)$ for a morphism of schemes $X \to S$, we use the simplified notation 
$\cHH^*(X/S)$ and $\HH^*(X/S)$ for Hochschild cohomology. 
\end{definition} 

\begin{warning}
Sometimes different notation is used for Hochschild cohomology; for instance, in 
\cite{IHC-CY2} which we shall reference several times below, 
sheafy Hochschild cohomology is denoted by $\HH^*(\cC/S)$. 
\end{warning}  

\begin{example}
\label{example-HHX}
Let $X \to S$ be a morphism of schemes. 
Then there is an equivalence 
\begin{equation*}
\cHH^*(X/S) \simeq \cHom_S(\cO_{\Delta}, \cO_{\Delta})
\end{equation*} 
where $\cO_{\Delta} \in \Dqc(X \times_S X)$ is the structure sheaf of the diagonal 
$\Delta \subset X \times_S X$. 
Indeed, $\cHH^*(X/S)$ can be computed as the Hochschild cohomology of the presentable $S$-linear category $\Dqc(X) = \Ind(\Dperf(X))$ (see \cite[Remark 4.2]{IHC-CY2}), and then the claim follows from the equivalence $\Dqc(X \times_S X) \simeq \Fun_{S}(\Dqc(X), \Dqc(X))$ of \cite{bzfn} which sends $\cO_{\Delta}$ to $\id_{\Dqc(X)}$. 
\end{example}

\begin{remark}
\label{remark-HH-bc} 
Sheafy Hochschild cohomology satisfies a base change formula: 
if $\cC$ is an $S$-linear category and $g \colon T \to S$ is a morphism, then there is a natural equivalence  
\begin{equation}
\label{cHH-bc}
g^*\cHH^*(\cC/S)  \xrightarrow{\, \sim \,} \cHH^*(\cC_T/T). 
\end{equation} 
This morphism is constructed as follows. The base change formalism gives a functor 
\begin{equation*}
\Fun_S(\cC, \cC) \otimes_{\Dperf(S)} \Dperf(T) \to \Fun_T(\cC_T, \cC_T), 
\end{equation*} 
which induces a morphism on mapping objects 
\begin{equation*}
g^*\cHH^*(\cC/S) \simeq \cHom_T(\id_\cC \boxtimes \cO_T, \id_{\cC} \boxtimes \cO_T) \to \cHom_T(\id_{\cC_T}, \id_{\cC_T}) = \cHH^*(\cC_T/T) , 
\end{equation*} 
where the first equivalence is the K\"{u}nneth formula for mapping objects \cite[Lemma 2.10]{NCHPD}. 
That this morphism is an equivalence is proved (in a more general context) in \cite[Lemma 4.3]{IHC-CY2}.  

Base change also induces maps on Hochschild cohomology. Namely, taking $\rR\Gamma(-)$ of the adjoint of the map~\eqref{cHH-bc} gives a natural map  
\begin{equation*}
\HH^*(\cC/S)  \to \HH^*(\cC_T/T) 
\end{equation*}
(which is usually not an equivalence). 
\end{remark} 

We will be concerned with the case of smooth proper $S$-linear categories. 
We refer to \cite[\S4]{NCHPD} for background on this notion. 
In particular, we note that if $\cC$ is an $S$-linear semiorthogonal component of $\Dperf(X)$ where $X \to S$ is a smooth proper morphism, then $\cC$ is a smooth proper $S$-linear category \cite[Lemma 4.9]{NCHPD}. 
Although not strictly necessary for our purposes, 
we observe that in the smooth proper case Hochschild cohomology satisfies the following finiteness property. 

\begin{lemma}
Let $\cC$ be a smooth proper $S$-linear category. 
Then $\cHH^*(\cC/S) \in \Dperf(S)$ is a perfect complex. 
\end{lemma}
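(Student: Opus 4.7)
The plan is to reformulate $\cHH^*(\cC/S)$ as a mapping object in the tensor product category $\cC \otimes_{\Dperf(S)} \cC^{\op}$ and then to appeal directly to the defining characterizations of smoothness and properness.

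First I establish an equivalence
\begin{equation*}
\cHH^*(\cC/S) \simeq \cHom_S(\Delta, \Delta),
\end{equation*}
where $\Delta \in \cC \otimes_{\Dperf(S)} \cC^{\op}$ is the diagonal bimodule and the mapping object on the right is computed in this tensor product category. The underlying input is the $\Dqc(S)$-linear Morita-type equivalence
\begin{equation*}
\Ind(\cC) \otimes_{\Dqc(S)} \Ind(\cC)^{\op} \simeq \Fun^{L}_{S}(\Ind(\cC), \Ind(\cC)),
\end{equation*}
under which the image $\Delta$ of $\cO_S$ along the coevaluation corresponds to $\id_{\Ind(\cC)}$; taking endomorphism objects on both sides and restricting to the small categories yields the desired reformulation, in the same spirit as Example~\ref{example-HHX}.

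Next I invoke the hypotheses on $\cC$ in two complementary ways. Smoothness of $\cC$ over $S$ is, by definition, the statement that $\Delta$ is a compact object of $\Ind(\cC \otimes_{\Dperf(S)} \cC^{\op})$, or equivalently that $\Delta$ belongs to the small category $\cC \otimes_{\Dperf(S)} \cC^{\op}$ itself. Properness of $\cC$ over $S$, on the other hand, propagates to properness of $\cC \otimes_{\Dperf(S)} \cC^{\op}$ over $S$: on pure tensors there is a K\"{u}nneth-type equivalence
\begin{equation*}
\cHom_S(E \otimes F, \; E' \otimes F') \simeq \cHom_S(E, E') \otimes_{\cO_S} \cHom_S(F, F'),
\end{equation*}
and the right-hand side is perfect as a tensor product of perfect complexes; this verifies the perfectness of mapping objects on a generating set of $\cC \otimes_{\Dperf(S)} \cC^{\op}$, and the general case follows by taking finite colimits and retracts. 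Combining these, $\cHom_S(\Delta, \Delta) \in \Dperf(S)$, which concludes the argument.

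The main subtlety, which is more a matter of careful bookkeeping than a genuine obstacle, lies in the first step: one must compare the mapping object computed in $\Fun_S(\cC, \cC)$ with the one computed inside $\cC \otimes_{\Dperf(S)} \cC^{\op}$, which involves passing through ind-completions and invoking that a smooth proper $S$-linear category is dualizable with dual $\cC^{\op}$ in the symmetric monoidal $\infty$-category of small idempotent-complete stable $S$-linear categories (a standard fact going back to To\"{e}n and implicit in \cite{NCHPD}). Once this identification is in place, perfectness is a formal consequence of the definitions.
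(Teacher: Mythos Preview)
Your proof is correct and follows essentially the same route as the paper: both identify $\Fun_S(\cC,\cC)$ with $\cC^{\op}\otimes_{\Dperf(S)}\cC$ via dualizability of $\cC$ and then conclude this category is proper. The only packaging difference is that the paper deduces properness of the tensor product in one stroke from the fact that a tensor product of dualizable categories is dualizable (hence smooth and proper), whereas you verify properness directly on pure tensors via K\"unneth and separately invoke smoothness to place $\Delta$ in the small category; the latter step is in fact automatic once the small-level equivalence $\Fun_S(\cC,\cC)\simeq \cC^{\op}\otimes_{\Dperf(S)}\cC$ is established, since $\id_{\cC}$ manifestly lies in $\Fun_S(\cC,\cC)$.
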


\begin{proof}
By definition, it suffices to show the functor category $\Fun_S(\cC, \cC)$ is proper. 
In fact, it is smooth and proper. 
Indeed, an $S$-linear category is smooth and proper if and only if it is dualizable, in which case the dual 
is given by the opposite category $\cC^{\op}$ (\cite[Lemma 4.8]{NCHPD}). 
Thus there is an equivalence $\Fun_S(\cC, \cC) \simeq \cC^{\op} \otimes_{\Dperf(S)} \cC$, and it follows that $\Fun_S(\cC, \cC)$ is dualizable, being the tensor product of such categories. 
\end{proof} 

\subsection{Connected linear categories} 
\label{section-connected}
Note that for any $S$-linear category $\cC$, by the definition of $\cHH^*(\cC/S)$ 
there is a canonical morphism $\cO_S \to \cHH^0(\cC/S)$. 

\begin{definition}
\label{definition-connected}
Let $\cC$ be an $S$-linear category. We say $\cC$ is \emph{connected (over $S$)} if for 
every morphism of schemes $T \to S$, we have 
$\cHH^i(\cC_T/T) = 0$ for $i < 0$ and $\cO_T \to \cHH^0(\cC_T/T)$ is an isomorphism. 
\end{definition} 

\begin{remark}
By base change for sheafy Hochschild cohomology (Remark~\ref{remark-HH-bc}), 
it suffices to consider only affine schemes $T$ in the definition of connectedness. 
\end{remark} 

The source of this terminology is the following example. 
\begin{example}
\label{example-scheme-connected}
Let $f \colon X \to S$ be a morphism of schemes. 
It follows from Example~\ref{example-HHX} that  $\cHH^i(X/S) = 0$ for $i < 0$ and 
$\cHH^0(X/S) \simeq \rR^0f_*\cO_X$. 
If $f \colon X \to S$ is a flat proper surjective morphism with geometrically reduced and connected fibers, then the  morphism $\cO_S \to \rR^0f_*\cO_X$ is an isomorphism. 
It follows that in this case, $\Dperf(X)$ is a connected $S$-linear category. 
\end{example} 

The above example can sometimes be leveraged to deduce connectivity of a semiorthogonal component of $\Dperf(X)$. 
Recall that if $\cC \to \Dperf(X)$ is the embedding of an $S$-linear semiorthogonal component, then there is a restriction morphism 
\begin{equation*}
\cHH^*(X/S) \to \cHH^*(\cC/S). 
\end{equation*} 
Kuznetsov~\cite{kuznetsov-heights} introduced a general method for controlling the cocone of this morphism, which is particularly effective when $\cC$ is defined as the orthogonal to an exceptional collection. In \cite{kuznetsov-heights} everything is done relative to a base field, but the arguments work similarly over a base ring, which is the case we will need. 
We recall the result below after introducing some notation. 

Let $f \colon X \to S$ be a smooth proper morphism of schemes with $S = \Spec(A)$ affine. 
Let $E_1, \dots, E_n \in \Dperf(X)$ be a relative exceptional collection, i.e. 
$\Ext^\bullet_A(E_i, E_j) = 0$ for $i > j$ and $\Ext^\bullet_A(E_i, E_i) = A[0]$ for all $i$. 
Then there is an $S$-linear semiorthogonal decomposition 
\begin{equation} 
\label{C-rel-Ei}
\Dperf(X) = \llangle \cC,  f^*\Dperf(S) \otimes E_1, \dots, f^*\Dperf(S) \otimes E_n \rrangle. 
\end{equation} 
The \emph{pseudoheight} of the collection $E_1, \dots, E_n$ is 
\begin{equation*}
\ph(E_1, \dots, E_n) = \min_{1 \leq a_0 < a_1 < \cdots < a_p \leq n} 
\Big( \re(E_{a_0}, E_{a_1}) + \cdots + \re(E_{a_{p-1}}, E_{a_p})  + 
\re(E_{a_p}, \rS^{-1}(E_{a_0})) - p  
\Big) 
\end{equation*} 
where $\re(F,F') = \min \set{ k \st \Ext^k_A(E, E') \neq 0 }$ (defined to be $+\infty$ if $\Ext^\bullet_A(E, E') = 0$) and 
$\rS^{-1}(F) = F \otimes \omega_{X/S}^{-1}[-\dim(X/S)]$ is the inverse of the relative Serre functor. 

\begin{proposition}[\cite{kuznetsov-heights}]
\label{proposition-heights}
Let $f \colon X \to S$ be a smooth proper morphism of schemes with $S$ affine, 
let $E_1, \dots, E_n \in \Dperf(X)$ be a relative exceptional collection, and let $\cC$ be defined by the semiorthogonal decomposition~\eqref{C-rel-Ei}. 
Then the restriction morphism $\cHH^i(X/S) \to \cHH^i(\cC/S)$ is an isomorphism for $i \leq \ph(E_1, \dots, E_n)-2$ and an injection for $i = \ph(E_1, \dots, E_n)-1$. 
\end{proposition}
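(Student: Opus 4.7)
The strategy is to adapt Kuznetsov's argument from \cite{kuznetsov-heights} from a base field to the affine base $S = \Spec(A)$. The essential content is a ``staircase'' resolution of the diagonal that isolates the contribution of the exceptional collection to the difference between $\cHH^*(X/S)$ and $\cHH^*(\cC/S)$.

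First, I would reinterpret everything in terms of Fourier--Mukai kernels on $X \times_S X$. By Example~\ref{example-HHX} we have $\cHH^*(X/S) \simeq \cHom_S(\cO_\Delta, \cO_\Delta)$, and the projection onto $\cC$ is represented by a kernel $\cP_\cC \in \Dperf(X \times_S X)$ with $\cHH^*(\cC/S) \simeq \cHom_S(\cP_\cC, \cP_\cC)$. The semiorthogonal decomposition~\eqref{C-rel-Ei} produces a triangle $\cP_\cC \to \cO_\Delta \to \cP_\cA$ where $\cP_\cA$ is the kernel for the projection onto $\cA = \llangle f^*\Dperf(S) \otimes E_1, \dots, f^*\Dperf(S) \otimes E_n \rrangle$, and the restriction morphism in question is identified, via the adjunctions defining $\cP_\cC$, with the map $\cHom_S(\cO_\Delta, \cO_\Delta) \to \cHom_S(\cO_\Delta, \cP_\cC)$ induced by this triangle.

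Next, I would resolve $\cP_\cA$ in terms of the exceptional objects. Since the $E_i$ form a relative exceptional collection, $\cP_\cA$ carries a natural filtration whose associated graded pieces are the ``diagonal kernels'' $E_i \boxtimes E_i^{\svee}$, suitably twisted by the relative Serre functor $\rS$. This is the standard staircase complex, built purely formally from the adjoint functors associated to each $E_i$, and it goes through in the relative setting once one has relative Serre duality for the smooth proper morphism $f$. Applying $\cHom_S(\cO_\Delta, -)$ and the K\"unneth formula~\eqref{mapping-object} then reduces the contribution of each graded piece to an expression of the form $\cHom_S(E_{a_0}, \rS^{-1}E_{a_p})$ with an explicit shift determined by the length of the staircase step. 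By the very definition of the pseudoheight, these contributions can be nonzero only in cohomological degrees $\geq \ph(E_1, \dots, E_n)$.

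Assembling the contributions through the long exact sequence associated to the staircase filtration then yields the conclusion: the cofiber of $\cHH^*(X/S) \to \cHH^*(\cC/S)$ has vanishing cohomology sheaves in degrees $\leq \ph(E_1, \dots, E_n) - 2$, and its cohomology in degree $\ph(E_1, \dots, E_n) - 1$ injects into the relevant $\Ext^\bullet_A$ group between exceptional objects. The main obstacle, I expect, is purely bookkeeping: verifying that the mutation functors, the staircase complex, and the spectral sequence organizing the pseudoheight estimate are all compatible with the $S$-linear structure. This is routine given the formalism of \S\ref{section-linear-categories}---relative exceptional collections, relative Serre duality for the smooth proper morphism $f$, and the K\"unneth formula for $\cHom_S$ over affine $S$ all behave formally as in the base-field case---so no genuinely new geometric input beyond \cite{kuznetsov-heights} is needed.
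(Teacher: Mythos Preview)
Your proposal is correct and aligns with the paper's treatment: the paper does not give a proof at all but simply cites \cite{kuznetsov-heights}, having already remarked before the statement that ``everything is done relative to a base field, but the arguments work similarly over a base ring.'' Your outline is a faithful sketch of how Kuznetsov's staircase/height argument carries over to the affine base using relative Serre duality and the $S$-linear formalism of \S\ref{section-linear-categories}, which is exactly the adaptation the paper is implicitly invoking.
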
 

Using this, we can give a simple criterion for connectedness of a semiorthogonal component. 

\begin{corollary}
\label{corollary-connected} 
Let $f \colon X \to S$ be a smooth proper surjective morphism of schemes with geometrically connected fibers. 
Let $E_1, \dots, E_n$ be finite locally free sheaves which form a relative exceptional collection in $\Dperf(X)$.
Let $\cC$ be defined by the semiorthogonal decomposition~\eqref{C-rel-Ei}. 
If the relative dimension satisfies $\dim(X/S) \geq n+1$, then $\cC$ is a connected $S$-linear category. 
\end{corollary}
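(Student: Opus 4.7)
The plan is to reduce to the affine case, apply Proposition~\ref{proposition-heights} after lower-bounding the pseudoheight of the collection, and combine with the scheme connectedness given by Example~\ref{example-scheme-connected}.

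First I would check that the hypotheses are preserved under an arbitrary base change $g \colon T \to S$. The pullback $X_T \to T$ is again smooth, proper, surjective, with geometrically connected fibers and the same relative dimension; the pullbacks $(E_i)_T$ are still finite locally free; they still form a relative exceptional collection by the K\"unneth formula \cite[Lemma 2.10]{NCHPD} applied to the mapping objects (using that for affine $S$ the vanishing $\Ext^\bullet_A(E_i, E_j) = 0$ is equivalent to the vanishing of $\cHom_S(E_i, E_j) \in \Dqc(S)$); and by the base change compatibility of $S$-linear semiorthogonal decompositions (\S\ref{section-linear-categories}), $\cC_T$ is the orthogonal to $\langle f_T^*\Dperf(T) \otimes (E_i)_T \rangle$. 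Via Remark~\ref{remark-HH-bc}, it therefore suffices to prove that whenever the hypotheses hold and $S$ is affine, $\cHH^i(\cC/S) = 0$ for $i < 0$ and $\cO_S \to \cHH^0(\cC/S)$ is an isomorphism.

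Next I would bound the pseudoheight. Since each $E_i$ is a locally free sheaf concentrated in degree zero, $E_i^\vee \otimes E_j$ is an ordinary sheaf and hence $\re(E_i, E_j) \geq 0$ for all pairs $i,j$. Setting $d = \dim(X/S)$, the shift built into $\rS^{-1}$ gives
\[
\Ext^k_A(E, \rS^{-1}(E')) = \rH^{k-d}(X, E^\vee \otimes E' \otimes \omega_{X/S}^{-1}),
\]
which vanishes for $k < d$, so $\re(E_{a_p}, \rS^{-1}(E_{a_0})) \geq d$. Therefore every summand appearing in the minimum defining $\ph(E_1, \dots, E_n)$ is at least $d - p$, and since $p$ ranges in $\{0, \dots, n-1\}$, the assumption $d \geq n+1$ gives $\ph(E_1, \dots, E_n) \geq d - (n-1) \geq 2$.

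Finally, Proposition~\ref{proposition-heights} yields that the restriction morphism $\cHH^i(X/S) \to \cHH^i(\cC/S)$ is an isomorphism for $i \leq 0$. Because $f$ is smooth its fibers are reduced, and together with geometric connectedness Example~\ref{example-scheme-connected} gives $\cHH^i(X/S) = 0$ for $i < 0$ and $\cO_S \xrightarrow{\sim} \cHH^0(X/S)$. Chaining these two facts gives the desired vanishing and identification for $\cC$. I do not expect a genuine obstacle; the most delicate point is the base change step, where one really needs to verify that the mapping objects themselves (not just their global sections) behave well, which is why the reduction to $S$ affine is built into the argument.
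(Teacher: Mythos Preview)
Your proof is correct and follows essentially the same approach as the paper's own argument: reduce to the affine case by base change, bound the pseudoheight by $\dim(X/S) - n + 1$ using that the $E_i$ are locally free sheaves, apply Proposition~\ref{proposition-heights} to identify $\cHH^i(X/S) \xrightarrow{\sim} \cHH^i(\cC/S)$ for $i \leq 0$, and conclude via Example~\ref{example-scheme-connected}. You supply more detail than the paper (particularly on base change and the pseudoheight estimate), but the strategy is identical.
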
 

\begin{proof}
All of our assumptions are preserved by base change along a morphism $T \to S$, so we may assume $S$ is affine and must prove $\HH^i(\cC/S) = 0$ for $i < 0$ and $\cO_S \xrightarrow{\sim} \cHH^0(\cC/S)$ is an isomorphism. 
It follows from the definitions and the assumption that the $E_i$ are locally free sheaves that 
$\ph(E_1, \dots, E_n) \geq \dim(X/S) - n + 1$. 
Therefore, if $\dim(X/S) \geq n+1$ then Proposition~\ref{proposition-heights} shows the map $\cHH^i(X/S) \to \cHH^i(\cC/S)$ is an isomorphism for $i \leq 0$. 
By Example~\ref{example-scheme-connected} this finishes the proof. 
\end{proof}

Corollary~\ref{corollary-connected} implies connectedness of many Kuznetsov components, 
including those of Fano threefolds of Picard number $1$ or of cubic fourfolds. 
Let us spell out explicitly how this verifies the hypotheses of Theorem~\ref{theorem-Z} 
in one example.

\begin{example}
Let $\cY \to S$ be a smooth family of Fano threefolds of Picard number $1$ and index $2$, equipped with a line bundle $\cO_\cY(1)$ whose restriction to each geometric fiber $\cY_s$ is the ample generator of $\Pic(\cY_s)$. 
Then $\cO_\cY, \cO_\cY(1)$ is a relative exceptional collection (in the sense of \cite[\S3.3]{stability-families}, cf.~\cite{samokhin}), and the $S$-linear category $\Ku(\cY) \subset \Dperf(\cY)$ defined by 
\begin{equation*}
\Dperf(\cY) = \llangle \Ku(\cY), f^*\Dperf(S), f^*\Dperf(S) \otimes  \cO_{\cY}(1)   \rrangle 
\end{equation*} 
is smooth, proper, and connected over $S$, and satisfies $\Ku(\cY)_s \simeq \Ku(\cY_s)$ for every geometric point $s \in S$. 
Indeed, $\cO_{\cY}, \cO_{\cY}(1)$ is a relative exceptional collection because this is so fiberwise. 
The $S$-linear category $\Ku(\cY)$ is smooth and proper as $\cY \to S$ is so, 
connected by Corollary~\ref{corollary-connected}, 
and by base change has as fibers the Kuznetsov components of the fibers of $\cY \to S$. 
\end{example} 

%%%%%%%%%%%%%%%%%%%%%%%%%%%%%%%%%%%%%%%%%%%%%%%%%%%%%%

\section{Group actions on categories} 
\label{section-group-actions-cats} 

Throughout this section, $G$ denotes a finite group. 
There are two notions of an action of  $G$ on a triangulated category $\cC$ appearing in the literature. The first one, often considered in mirror symmetry, is simply a group homomorphism $\phi$ from $G$ to the group of autoequivalences, considered up to natural transformations. A second, finer notion was originally introduced by Deligne in \cite{Deligne:groupedestresses} and requires a choice of natural transformations
\[ \phi(g) \circ \phi(g') \cong \phi(g \cdot g')
\]
compatible with triple compositions. This finer notion is necessary for the definition of $G$-equivariant objects; we refer to \cite{beckmann-oberdieck} for a recent account, including of the obstruction to lifting the former notion to the latter.

In this section, we consider actions on the homotopy category $\rh \ccD$ of an $\infty$-category $\ccD$ that lift to an action on $\ccD$. In this context, the first and second notion above correspond to 1-categorical and 2-categorical group actions on $\ccD$ . 

First in \S\ref{subsection-group-actions} we discuss some generalities on group actions, 
focusing on obstructions to lifting 1-categorical group actions to $\infty$-categorical actions. 
In \S\ref{subsection-group-action-linear-cats} we 
specialize to the case of group actions on linear categories, 
and show that  for connected linear categories over a base scheme 
there is a single obstruction, 
whose vanishing is an open condition in the \'{e}tale topology of the base (Proposition~\ref{proposition-etale-vanishing}). 
Finally, in \S\ref{subsection-invariant-categories} we study the category of (co)invariants for a group action on a linear category; this is the $\infty$-categorical analogue of Elagin's notion \cite{elagin} of the triangulated category of $G$-equivariant objects. In particular, we show that if the order of the group is invertible on the base scheme, then invariants commute with base change (Lemma~\ref{lemma-invariants-bc}) and preserve the property of being smooth and proper (Proposition~\ref{proposition-invariants-smooth-proper}).

\subsection{Group actions $\infty$-categorically}
\label{subsection-group-actions} 
We freely use the language of $\infty$-categories, as developed in \cite{HTT}. 
We often think of $\infty$-groupoids interchangeably as topological spaces, under 
the standard correspondence (given by passage to geometric realizations and singular simplicial sets). 

We denote by $BG$ the classifying space of $G$.  
When regarded as an $\infty$-groupoid, $BG$ is the nerve of the ordinary category with a single object whose endomorphisms are $G$. 
We write $* \in BG$ for the unique object. 

\begin{definition}
\label{definition-group-action}
Let $\ccD$ be an $\infty$-category, and let $X \in \ccD$ be an object. 
An \emph{action} of $G$ on $X$ is a functor $\phi \colon BG \to \ccD$ 
such that $\phi(*) = X$. 
\end{definition} 

Let us relate this $\infty$-categorical definition to some more classical notions. 

\subsubsection{1-categorical actions} 
\label{1-categorical} 
Suppose that $\ccD$ is an $1$-category. 
For such a category, we denote by $N(\ccD)$ its nerve, which is an $\infty$-category. 
Let $(BG)_1$ denote the $1$-category with a single object $*$ whose endomorphisms are $G$, 
so that $N((BG)_1) = BG$ by definition. 
Recall \cite[\href{https://kerodon.net/tag/002Y}{Tag 002Y}]{kerodon} 
the nerve construction induces a bijection 
\begin{equation*}
\Hom((BG)_1, \ccD) \cong \Hom(BG, N(\ccD)), 
\end{equation*} 
where the left side is the set of all functors of $1$-categories $(BG)_1 \to \ccD$ and 
the right is the set of all functors of $\infty$-categories $BG \to N(\ccD)$. 
Note that a functor $(BG)_1 \to \ccD$ taking $*$ to an object $X \in \ccD$ is 
equivalent to the data of a homomorphism $G \to \Aut X$ to the group of automorphisms of $X$; 
we call this a \emph{1-categorical action} of $G$ on $X$. 
Thus under the nerve construction, Definition~\ref{definition-group-action} recovers the notion of a $1$-categorical action. 

\subsubsection{2-categorical actions} 
\label{2-categorical}
Suppose that $\ccD$ is $(2,1)$-category, i.e. a $2$-category whose $2$-morphisms are all invertible. 
For such a category, we denote by $N(\ccD)$ its Duskin nerve \cite[\href{https://kerodon.net/tag/009T}{Tag 009T}]{kerodon}, which is an $\infty$-category \cite[\href{https://kerodon.net/tag/00AC}{Tag 00AC}]{kerodon}. 
If $\ccD$ is a $1$-category, regarded as a $2$-category with only identity $2$-morphisms, then 
the Duskin nerve is identified with the usual nerve, so the notation $N(\ccD)$ is unambiguous. 
By \cite[\href{https://kerodon.net/tag/00AU}{Tag 00AU}]{kerodon} the Duskin nerve construction induces a bijection 
\begin{equation*}
\Hom_{\mathrm{ULax}}((BG)_1, \ccD) \cong \Hom(BG, N(\ccD)), 
\end{equation*} 
where the left side denotes the set of all strictly unitary lax functors $(BG)_1 \to \ccD$ 
(in the sense of \cite[\href{https://kerodon.net/tag/008R}{Tag 008R}]{kerodon}) and we regard $(BG)_1$ as a $2$-category with only identity $2$-morphisms.  
For simplicity let us assume $\ccD$ is a strict $(2,1)$-category; then concretely, a strictly unitary lax functor $(BG)_1 \to \ccD$ taking $*$ to $X$ amounts to the following data, which we call a \emph{2-categorical action}  of $G$ on $X$: 
\begin{itemize}
\item For every $g \in G$, a $1$-morphism $\phi(g) \colon X \to X$, such that $\phi(1) = \id_X$. 
\item For every pair $g,f \in G$, a $2$-morphism $\mu_{g,f} \colon \phi(g) \circ \phi(f) \Rightarrow \phi(gf)$, such that 
the diagram 
\begin{equation*}
\xymatrix{
\phi(h) \circ \phi(g) \circ \phi(f) \ar@2{->}[rr]^{\, \mu_{h,g} \phi(f) \,} \ar@2{->}[d]_{\phi(h)\mu_{g,f}} &&  \phi(hg) \circ \phi(f) \ar@2{->}[d]^{\mu_{hg,f}} \\ 
\phi(h) \circ \phi(gf) \ar@2{->}[rr]^{\mu_{h, gf}} && \phi(hgf) 
} 
\end{equation*} 
is commutative. 
\end{itemize} 

\subsubsection{Obstructions to $\infty$-actions} 
\label{subsection-obstructions} 
Now suppose $\ccD$ is an $\infty$-category. 
For an object $X \in \ccD$, let $\Aut X$ denote the space of automorphisms  
and let $B\Aut X$ denote its classifying space. 
As the sub-$\infty$-groupoid of $\ccD$ spanned by $X$ is equivalent to $B\Aut X$, 
a $G$-action on $X$ is tantamount to 
the data of a functor $\psi \colon BG \to B\Aut X$. 
We say two $G$-actions $\phi, \phi' \colon BG \to \ccD$ are \emph{equivalent} if there is an equivalence $\phi \simeq \phi'$ of functors. By the previous remark, equivalence classes of $G$-actions on an object $X \in \ccD$ are in bijection with the set 
of homotopy classes of maps from $BG$ to $B\Aut X$. 

This perspective is useful for building $G$-actions. 
Namely, consider the Postnikov tower   
\begin{equation*}
\cdots \to \tau_{\leq 2}( B \Aut X ) \to 
\tau_{\leq 1} (B \Aut X)  \to \tau_{\leq 0}( B \Aut X) = * 
\end{equation*} 
of $B \Aut X$. 
So $\pi_{i}(\tau_{\leq n}(B \Aut X)) =0$ for $i > n$,  
the map $\tau_{\leq n} (B \Aut X) \to \tau_{\leq n-1}(B \Aut X)$ is a fibration with fiber $K(\pi_n(B \Aut X), n)$, and 
$B \Aut X \simeq \lim \tau_{\leq n}(B \Aut X)$.  
(Here and below, we typically suppress basepoints when dealing with homotopy groups.) 
Note that there is an isomorphism 
\begin{equation}
\label{pi-BAut}
\pi_n(B \Aut X) \cong \pi_{n-1}( \Aut X) ; 
\end{equation} 
in particular, it follows $\tau_{\leq 1} (B \Aut X) \simeq B \, \pi_0(\Aut X)$. 

Now suppose we are given a map $BG \to \tau_{\leq 1} (B \Aut X)$. 
By the preceding observation, such a map corresponds via taking $\pi_1$ to a group homomorphism $\phi_1 \colon G \to \pi_0(\Aut X)$. 
We call $\phi_1$ a \emph{$1$-categorical action} of $G$ on $X$, 
because when $X$ is regarded as an object of the homotopy category $\rh \ccD$ 
then $\phi$ is precisely a $1$-categorical action in the sense of \S\ref{1-categorical}. 
We say a $G$-action $\phi \colon BG \to B \Aut X$ is an \emph{$\infty$-lift of $\phi_1$} if the composition 
\begin{equation*}
BG \xrightarrow{\phi} B\Aut X \to \tau_{\leq 1} (B \Aut X)
\end{equation*}
recovers $\phi_1$  (upon taking $\pi_1$).  
It is also convenient to study an intermediate notion. 
Namely, we call a map $\phi_n \colon BG \to \tau_{\leq n}(B \Aut X)$ an \emph{$n$-lift} of $\phi_1$ if the composition 
\begin{equation*}
BG \xrightarrow{\phi_n} \tau_{\leq n}( B\Aut X) \to \tau_{\leq 1} (B \Aut X)
\end{equation*} 
recovers $\phi_1$. 
We say two $n$-lifts are equivalent if they have the same homotopy class. 

The $n$-lifts of $\phi_1$ can be studied via obstruction theory; 
below we spell out the simplest case of $2$-lifts. 
Note that $\pi_0(\Aut X)$ acts on $\pi_i(\Aut X)$ for $i \geq 1$ via 
conjugation; under the identifications $\pi_0(\Aut X) \cong \pi_1(B\Aut X)$ 
and $\pi_i(\Aut X) \cong \pi_{i+1}(B \Aut X)$, this is the usual action of the 
fundamental group on higher homotopy groups. 
In particular, a $1$-categorical action $\phi_1 \colon G \to  \pi_0( \Aut X )$ 
also induces an action of $G$ on $\pi_i(\Aut X)$ for $i \geq 1$. 

\begin{lemma}
\label{lemma-infinity-action}
Let $\ccD$ be an $\infty$-category, let $X \in \ccD$ be an object, and 
let $\phi_1 \colon G \to \pi_0(\Aut X)$ be a $1$-categorical action of $G$ on $X$.  
Regarding $\pi_1(\Aut X)$ as a local system on $BG$  
via the action of $G$ described above, 
then there is a canonical obstruction class 
\begin{equation*} 
\ob(\phi_1) \in \rH^3(BG, \pi_1(\Aut X)) 
\end{equation*} 
such that a $2$-lift of $\phi_1$ exists if and only if $\ob(\phi_1) = 0$, and 
in this case the set of equivalence classes of $2$-lifts is a 
$\rH^2(BG, \pi_1(\Aut X))$-torsor. 
If $\pi_i(\Aut X) = 0$ for $i \geq 2$, then the same conclusion 
holds for $\infty$-lifts. 
\end{lemma}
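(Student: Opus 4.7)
The plan is to apply standard obstruction theory to the Postnikov tower of $B\Aut X$ set up in \S\ref{subsection-obstructions}. The key input is that the fibration
\[
\tau_{\leq 2}(B\Aut X) \to \tau_{\leq 1}(B\Aut X) \simeq B\pi_0(\Aut X)
\]
is a principal fibration with fiber $K(\pi_2(B\Aut X), 2) = K(\pi_1(\Aut X), 2)$, and is therefore classified by a $k$-invariant
\[
k_2 \colon B\pi_0(\Aut X) \to K(\pi_1(\Aut X), 3),
\]
where $\pi_1(\Aut X)$ is equipped with the local system structure on $B\pi_0(\Aut X)$ that, as recalled in \S\ref{subsection-obstructions}, coincides with the conjugation action of $\pi_0(\Aut X)$ on $\pi_1(\Aut X)$.

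With this in hand I would define
\[
\ob(\phi_1) \coloneqq \phi_1^* k_2 \in \rH^3(BG, \pi_1(\Aut X)),
\]
where on the right $\pi_1(\Aut X)$ is viewed as a local system on $BG$ via the composite $G \xrightarrow{\phi_1} \pi_0(\Aut X) \to \Aut \pi_1(\Aut X)$. The defining universal property of $k_2$ is that the space of lifts of $\phi_1$ along $\tau_{\leq 2}(B\Aut X) \to \tau_{\leq 1}(B\Aut X)$ is the homotopy fiber of $k_2 \circ \phi_1$ over the basepoint. In particular, such a lift exists if and only if $k_2 \circ \phi_1$ is null-homotopic, which is precisely the statement $\ob(\phi_1) = 0$.

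When the obstruction vanishes, principality of the Postnikov fibration identifies the (non-empty) space of lifts as a torsor over $\Map(BG, K(\pi_1(\Aut X), 2))$ computed with the local coefficient system; passing to $\pi_0$ yields the claimed $\rH^2(BG, \pi_1(\Aut X))$-torsor of equivalence classes of $2$-lifts. For the final assertion, if $\pi_i(\Aut X) = 0$ for $i \geq 2$ then by~\eqref{pi-BAut} we have $\pi_j(B\Aut X) = 0$ for $j \geq 3$, so the truncation $B\Aut X \to \tau_{\leq 2}(B\Aut X)$ is an equivalence and every $2$-lift is canonically an $\infty$-lift.

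The main obstacle is essentially bookkeeping: one must verify that the local system on $\pi_1(\Aut X)$ produced by the Postnikov $k$-invariant $k_2$ genuinely agrees with the conjugation action described in \S\ref{subsection-obstructions}. This is a standard but slightly delicate identification, coming from the fact that the monodromy action of $\pi_1(Y)$ on the fiber $K(A,n)$ of a principal fibration $X \to Y$, combined with the equivalence $\Omega B\Aut X \simeq \Aut X$, recovers conjugation in $\Aut X$; I would include a brief verification of this point to keep the proof self-contained.
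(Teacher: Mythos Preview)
Your proposal is correct and follows the same approach as the paper: the paper's proof simply invokes ``standard obstruction theory'' for the first claim and then observes that $\pi_i(\Aut X) = 0$ for $i \geq 2$ implies $B\Aut X \simeq \tau_{\leq 2}(B\Aut X)$ via~\eqref{pi-BAut}, exactly as you do. Your write-up just unpacks what ``standard obstruction theory'' means in this setting (the $k$-invariant $k_2$ and the torsor structure), which is entirely consistent with the paper's terse treatment.
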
 

\begin{proof}
The first claim holds by standard obstruction theory. 
If $\pi_i(\Aut X) = 0$ for $i \geq 2$, then by~\eqref{pi-BAut} we have 
$B \Aut X \simeq \tau_{\leq 2}(B \Aut X)$, so $\infty$-lifts are the same 
as $2$-lifts. 
\end{proof} 

\subsection{Group actions on linear categories} 
\label{subsection-group-action-linear-cats}
Let $S$ be a base scheme. 
Recall that the collection of all $S$-linear categories (with morphisms between them the exact $S$-linear functors) can be organized into an $\infty$-category $\Cat_S$ \cite{NCHPD}. 
Thus, using the formalism of \S\ref{subsection-group-actions} we can make sense of $G$-actions on 
$S$-linear categories. 

\subsubsection{Obstructions in terms of Hochschild cohomology} 
Our main observation is that when negative Hochschild cohomology vanishes, then it is 
easy to classify $\infty$-lifts of $1$-categorical $G$-actions. 
For this, we need a preliminary lemma. 
If $\cC$ is an $S$-linear category, to emphasize the dependence on the $S$-linear structure we write $\Aut(\cC/S)$ for the space of $S$-linear autoequivalences of~$\cC$, i.e. the automorphism space of $\cC$ as an object of $\Cat_S$. 
Note also that $\HH^0(\cC/S) = \rH^0(\cHom_S(\id_\cC, \id_\cC))$ has a $\Gamma(S, \cO_S)$-algebra structure, so we may 
consider the group of units $\HH^0(\cC/S)^{\times}$.

\begin{lemma}
\label{lemma-piAut}
There are natural group isomorphisms 
\begin{equation*}
\pi_i(\Aut(\cC/S)) 
\cong 
\begin{cases}
\HH^0(\cC/S)^{\times}  & \text{if }~ i = 1 ,  \\ 
\HH^{1-i}(\cC/S) & \text{if }~ i \geq 2 . 
\end{cases} 
\end{equation*} 
\end{lemma}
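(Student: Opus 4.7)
The plan is to identify $\Aut(\cC/S)$ as a subspace of the mapping space $\Map_{\Cat_S}(\cC, \cC)$, and then compute the homotopy groups of that mapping space using the characterization of $\cHH^*(\cC/S)$ as the endomorphism object of $\id_\cC$ in the $S$-linear functor category $\Fun_S(\cC,\cC)$. The whole argument is essentially a repeated application of the relationship between spaces in $\Cat_S$ and their underlying complexes in $\Dqc(S)$.

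Concretely, the first step is to observe that the mapping $\infty$-groupoid $\Map_{\Cat_S}(\cC, \cC)$ is (by definition of $\Cat_S$) the core of the $\infty$-category $\Fun_S(\cC, \cC)$, and that $\Aut(\cC/S)$ is the union of those connected components consisting of equivalences. In particular, the base point $\id_\cC$ lies in $\Aut(\cC/S)$, and for $i \geq 1$ one has
\begin{equation*}
\pi_i(\Aut(\cC/S), \id_\cC) \cong \pi_i\!\left(\Fun_S(\cC,\cC)^{\simeq}, \id_\cC\right),
\end{equation*}
since passing to a union of components does not change higher homotopy groups. The second step is the standard identification
\begin{equation*}
\Omega_{\id_\cC} \Fun_S(\cC,\cC)^{\simeq} \simeq \Map^{\simeq}_{\Fun_S(\cC,\cC)}(\id_\cC, \id_\cC),
\end{equation*}
the subspace of the mapping space spanned by invertible morphisms. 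Thus $\pi_1(\Aut(\cC/S), \id_\cC)$ is the group of automorphisms of $\id_\cC$ in the homotopy category of $\Fun_S(\cC,\cC)$, while for $i \geq 2$ the higher homotopy groups agree with those of $\Map_{\Fun_S(\cC,\cC)}(\id_\cC, \id_\cC)$ at $\id_{\id_\cC}$.

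The third step is to compute this mapping space. By the defining property~\eqref{mapping-object} of the mapping object, applied to the $S$-linear category $\Fun_S(\cC,\cC)$ and the objects $\id_\cC, \id_\cC$, there is an equivalence
\begin{equation*}
\Map_{\Fun_S(\cC,\cC)}(\id_\cC, \id_\cC) \simeq \Map_{\Dqc(S)}\!\left(\cO_S, \cHH^*(\cC/S)\right) \simeq \Omega^\infty \HH^*(\cC/S),
\end{equation*}
using the definition $\cHH^*(\cC/S) = \cHom_S(\id_\cC, \id_\cC)$ and the adjunction between $\rR\Gamma$ and the constant-sheaf functor. Taking homotopy groups via Dold--Kan yields $\pi_i = \HH^{-i}(\cC/S)$ for $i \geq 0$, and in particular $\pi_0$ is $\HH^0(\cC/S)$ as a ring. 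Passing to the invertible components cuts this down to $\HH^0(\cC/S)^{\times}$ on $\pi_0$ while leaving the higher homotopy groups unchanged (they already live over the component of $\id_{\id_\cC}$). Combining this with the shift in step two gives the asserted formula: $\pi_1 = \HH^0(\cC/S)^{\times}$ and $\pi_i = \HH^{1-i}(\cC/S)$ for $i \geq 2$. Naturality is automatic from the functoriality of each construction.

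The main thing to be careful about is the bookkeeping of the shift by one when passing from $\Aut(\cC/S)$ to its loop space, and in particular the distinction between $i = 1$ and $i \geq 2$: the $\pi_1$-level sees only invertible components of $\HH^0$, whereas for $i \geq 2$ connectedness at the basepoint makes the whole Hochschild cohomology group contribute. I do not anticipate serious obstacles beyond this indexing check, since all key inputs (the description of $\Cat_S$ as an $\infty$-category of modules, the definition of the mapping object, and the Dold--Kan correspondence) are standard.
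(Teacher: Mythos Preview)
Your proof is correct and follows essentially the same approach as the paper: identify $\Aut(\cC/S)$ as the sub-$\infty$-groupoid of equivalences in $\Fun_S(\cC,\cC)$, loop once at $\id_\cC$ to land in (invertible components of) $\Map_{\Fun_S(\cC,\cC)}(\id_\cC,\id_\cC)$, and then compute the homotopy groups of this mapping space via the defining property of the mapping object $\cHH^*(\cC/S)$. The paper packages the first two steps as a pair of general observations about automorphism spaces in $\infty$-groupoids sitting inside $\infty$-categories, while you phrase them directly in terms of loop spaces and cores, but the content and indexing are identical.
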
 

\begin{proof}
Note that in general, if $\ccD$ is an $\infty$-groupoid and $X \in \ccD$ is an object, then 
there is an isomorphism 
\begin{equation*}
\pi_i(\ccD, X) \cong 
\pi_{i-1}(\Aut X) 
\quad \text{for} \quad i \geq 1,
\end{equation*} 
where the left side denotes the homotopy 
group of $\ccD$ (thought of as a topological space) based at the point $X$. 
Further, suppose that $\ccD \hookrightarrow \ccD'$ is the sub-$\infty$-groupoid 
spanned by some collection of objects in an $\infty$-category $\ccD'$. 
If $\Map_{\ccD'}(X, X)$ denotes the space of endomorphisms of $X$ as an object of $\ccD'$, 
then $\pi_0(\Map_{\ccD'}(X, X))$ is a monoid, whose group of invertible elements we denote 
by $\pi_0(\Map_{\ccD'}(X, X))^{\times}$. 
Then there are isomorphisms 
\begin{equation*}
\pi_i(\Aut X) 
 \cong 
\begin{cases}
\pi_0(\Map_{\ccD'}(X, X))^{\times} & \text{if }~ i = 0, \\ 
\pi_i(\Map_{\ccD'}(X,X)) & \text{if }~ i \geq 1. 
\end{cases}
\end{equation*} 
Indeed, for $i = 0$ this holds by the definition of $\Aut X$, while for $i \geq 1$ this holds because $k$-morphisms in $\ccD'$ are invertible (and hence coincide with $k$-morphisms in $\ccD$) for $k \geq 2$. 

As $\Aut(\cC/S)$ is the sub-$\infty$-groupoid spanned by the autoequivalences in  
the $\infty$-category $\Fun_S(\cC, \cC)$, combining the above observations shows 
\begin{equation*}
\pi_i(\Aut(\cC/S), \id_{\cC}) \cong 
\begin{cases}
\pi_0(\Map_{\Fun_S(\cC,\cC)}(\id_\cC, \id_\cC))^{\times} & \text{if }~ i = 1, \\ 
\pi_{i-1}(\Map_{\Fun_S(\cC, \cC)}(\id_\cC, \id_\cC)) & \text{if }~ i \geq 2. 
\end{cases}
\end{equation*} 
By the characterizing property of mapping objects~\eqref{mapping-object} and 
the definition~\eqref{HH-definition} of Hochschild cohomology, we have 
\begin{equation*}
\Map_{\Fun_S(\cC, \cC)}(\id_\cC, \id_\cC) \simeq 
\Map_{\Dqc(S)}(\cO_S, \cHH^*(\cC/S)). 
\end{equation*} 
Taking homotopy groups, we conclude 
\begin{equation*}
\pi_i \Map_{\Fun_S(\cC, \cC)}(\id_\cC, \id_\cC) \simeq \Ext^{-i}(\cO_S, \cHH^*(\cC/S)) = 
\HH^{-i}(\cC/S). 
\end{equation*} 
All together, this proves the claimed formula for $\pi_i(\Aut(\cC/S))$. 
\end{proof} 

Note that via conjugation $\pi_0(\Aut(\cC/S))$ acts on the $\Gamma(S, \cO_S)$-algebra $\HH^0(\cC/S)$, 
and hence so does $G$ for any $1$-categorical action $\phi_1 \colon G \to  \pi_0( \Aut (\cC/S) )$. 
Combining Lemma~\ref{lemma-infinity-action} and Lemma~\ref{lemma-piAut} gives the following. 

\begin{corollary}
\label{corollary-obstruction-HH}
Let $\cC$ be an $S$-linear category, and 
let $\phi_1 \colon G \to \pi_0( \Aut(\cC/S) )$ be a $1$-categorical action of $G$ on $\cC$. 
Regarding $\HH^0(\cC/S)^{\times}$ as a local system on $BG$ via the action of $G$ 
described above, 
then there is a canonical obstruction class 
\begin{equation*}
\ob(\phi_1) \in \rH^3(BG, \HH^0(\cC/S)^{\times}) 
\end{equation*}   
such that a $2$-lift of $\phi_1$ exists if and only if $\ob(\phi_1) = 0$, 
and in this case the set of equivalence classes of $2$-lifts is a $\rH^2(BG, \HH^0(\cC/S)^{\times})$-torsor. 
If $\HH^i(\cC/S) = 0$ for $i < 0$, then the same conclusion holds for $\infty$-lifts. 
\end{corollary}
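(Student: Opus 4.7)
The plan is to deduce Corollary~\ref{corollary-obstruction-HH} by directly specializing Lemma~\ref{lemma-infinity-action} to the $\infty$-category $\ccD = \Cat_S$ with object $X = \cC$, and then translating the resulting statement via the identification of homotopy groups provided by Lemma~\ref{lemma-piAut}.

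First, I would apply Lemma~\ref{lemma-infinity-action} to the $1$-categorical action $\phi_1 \colon G \to \pi_0(\Aut(\cC/S))$. This yields a canonical obstruction class in $\rH^3(BG, \pi_1(\Aut(\cC/S)))$, whose vanishing is equivalent to the existence of a $2$-lift, and in that case the set of equivalence classes of $2$-lifts is naturally a torsor under $\rH^2(BG, \pi_1(\Aut(\cC/S)))$. Here $\pi_1(\Aut(\cC/S))$ is regarded as a local system on $BG$ via the conjugation action of $\pi_0(\Aut(\cC/S))$ on $\pi_1(\Aut(\cC/S))$.

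Second, I would invoke Lemma~\ref{lemma-piAut} with $i = 1$ to identify $\pi_1(\Aut(\cC/S)) \cong \HH^0(\cC/S)^{\times}$ as abelian groups, and substitute this into the output of the previous step. The one point that requires verification is that the $G$-action on $\pi_1(\Aut(\cC/S))$ induced via conjugation by $\phi_1$ matches, under this isomorphism, the $G$-action on $\HH^0(\cC/S)^{\times}$ stated in the corollary. This reduces to the naturality of the bijection in Lemma~\ref{lemma-piAut}: an element of $\pi_0(\Aut(\cC/S))$ represented by an $S$-linear autoequivalence $F$ acts on $\pi_1(\Aut(\cC/S))$ by sending a loop $\gamma$ at $\id_\cC$ to $F\gamma F^{-1}$, and the equivalences $\Map_{\Fun_S(\cC,\cC)}(\id_\cC,\id_\cC) \simeq \Map_{\Dqc(S)}(\cO_S, \cHH^*(\cC/S))$ used in Lemma~\ref{lemma-piAut} identify this with the natural automorphism of $\HH^0(\cC/S)$ determined by conjugation by $F$.

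Finally, for the $\infty$-lift statement, suppose that $\HH^i(\cC/S) = 0$ for all $i < 0$. Then Lemma~\ref{lemma-piAut} shows $\pi_i(\Aut(\cC/S)) \cong \HH^{1-i}(\cC/S) = 0$ for every $i \geq 2$, so the hypothesis of the second assertion of Lemma~\ref{lemma-infinity-action} is satisfied, and the $2$-lift conclusion automatically upgrades to an $\infty$-lift conclusion. The only substantive step is the compatibility check in the previous paragraph, which is a routine but careful unwinding of the naturality of the identification of $\pi_1$ with Hochschild cohomology; the rest of the argument is purely formal.
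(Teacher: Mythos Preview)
Your proposal is correct and follows exactly the paper's approach: the paper simply states that the corollary follows by combining Lemma~\ref{lemma-infinity-action} and Lemma~\ref{lemma-piAut}, which is precisely what you do. Your additional remark about checking compatibility of the $G$-actions is a welcome clarification of a point the paper leaves implicit.
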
 

\begin{remark}
\label{remark-connected-HH0}
Suppose $\cC$ is an $S$-linear category 
such that the map $\Gamma(S, \cO_S) \to \HH^0(\cC/S)$ is an isomorphism; 
this holds if $\cC$ is a connected $S$-linear category, such as  
$\cC = \Dperf(X)$ 
for a morphism $X \to S$ with assumptions as in Example~\ref{example-scheme-connected}. 
Then $\pi_0(\Aut(\cC/S))$ acts trivially on $\HH^0(\cC/S)$, because  
the conjugation action fixes $\id \colon \id_{\cC} \to \id_{\cC}$ 
and the isomorphism $\Gamma(S, \cO_S) \to \HH^0(\cC/S)$ takes $1$ to $\id_{\cC}$. 
Hence if this condition holds in the setup of Corollary~\ref{corollary-obstruction-HH}, 
then the obstruction class lies in $\rH^3(BG, \Gamma(S,\cO_S)^{\times})$, where $\Gamma(S,\cO_S)^{\times}$ is 
a constant local system on $BG$. 
\end{remark} 

\begin{remark}
\label{remark-AutC}
Let us make some comments about $\Aut(\cC/S)$ for an $S$-linear category $\cC$. 
\begin{enumerate}
\item \label{remark-AutC-FM}
Suppose $\cC = \Dperf(X)$ where $X \to S$ is a smooth proper morphism of schemes.  
Then by \cite{bzfn} there is an equivalence $\Dperf(X \times_S X) \simeq \Fun_{S}(\Dperf(X), \Dperf(X))$ 
that takes $K \in \Dperf(X \times_S X)$ to the Fourier--Mukai functor $\Phi_K = \pr_{2*}(\pr_1^*(-) \otimes K)$. 
Therefore, $\Aut(\Dperf(X)/S)$ is equivalent to the sub-$\infty$-groupoid of $\Dperf(X \times_S X)$ 
spanned by objects $K$ such that $\Phi_K$ is an equivalence. 

\item Suppose $S = \Spec A$ is affine.  
The homotopy category $\rh \cC$ is a triangulated category which is $A$-linear in the sense that it is enriched in $A$-modules. 
Let $\pi_0(\Aut(\rh \cC/A))$ be the group of $A$-linear exact autoequivalences of $\rh\cC$ modulo isomorphisms of functors. 
(Classically, this group may be denoted $\Aut(\rh \cC/A)$, but it is more consistent with our notation above to rather denote by $\Aut(\rh \cC/A)$ the groupoid 
whose objects are $A$-linear exact autoequivalences and whose morphisms are isomorphisms of functors.) 
There is a natural group homomorphism 
\begin{equation*}
\pi_0(\Aut(\cC/A)) \to \pi_0(\Aut(\rh \cC/A)). 
\end{equation*} 
The question of when this map is an isomorphism is interesting and difficult. 
In the case where $A=k$ is a field and $\cC = \Dperf(X)$ for a smooth projective variety $X$ over $k$, the answer is positive; 
indeed, this follows from~\eqref{remark-AutC-FM} together with the existence and uniqueness of Fourier--Mukai kernels for triangulated equivalences \cite[Theorem 2.2]{orlov-K3}. 
\end{enumerate} 
\end{remark} 

\begin{remark}
Our arguments have similar consequences for group actions on $1$-categories. 
Let $A$ be a ring. 
We use the term \emph{classical $A$-linear category} to mean a $1$-category which is enriched in 
$A$-modules. 
Let $\Catcl_A$ be the strict $(2,1)$-category with objects the classical $A$-linear categories, 
$1$-morphisms the $A$-linear functors, and $2$-morphisms the isomorphisms of functors. 
For any $\cC \in \Catcl_A$, we can consider the notion of a $2$-categorical action 
on $\cC$ (in the sense of~\S\ref{2-categorical}). 
Any such action induces a  homomorphism $\phi_1 \colon G \to \pi_0(\Aut(\cC/A))$, 
where $\pi_0(\Aut(\cC/A))$ denotes the group of $A$-linear autoequivalences modulo isomorphisms of functors. 

Conversely, suppose we are given a homomorphism $\phi_1 \colon G \to \pi_0(\Aut(\cC/A))$, and we 
want to understand when it lifts to a $2$-categorical action. 
By analogy with the case of linear categories, 
define the $A$-algebra $\HH^0(\cC/A) \coloneqq \Hom(\id_{\cC}, \id_{\cC})$. 
Then there is a canonical obstruction class $\ob(\phi_1) \in \rH^3(BG, \HH^0(\cC/A)^{\times})$ 
such that a $2$-categorical action lifting $\phi_1$ exists if and only if $\ob(\phi_1) = 0$, 
and in this case the set of equivalence classes of $\infty$-lifts is a $\rH^2(BG, \HH^0(\cC/S)^{\times})$-torsor. 
Indeed, 
the Duskin nerve $N(\Catcl_A)$ is an $\infty$-category, $\cC$ can be thought 
of as an object of $N(\Catcl_A)$, and its corresponding automorphism space 
$\Aut(\cC/A)$ has $\pi_0$ as described above,  
$\pi_1(\Aut(\cC/A)) \cong \HH^0(\cC/A)^{\times}$, and vanishing higher homotopy groups; 
therefore, the claim follows from Lemma~\ref{lemma-infinity-action} and the correspondence 
between $\infty$-categorical actions on $\cC \in N(\Catcl_A)$ and $2$-categorical actions on $\cC \in \Catcl_A$ described in~\S\ref{2-categorical}. 

In the case where $\HH^0(\cC/A) \cong A$ and $A = \bC$ is the field of complex numbers, 
this obstruction to $2$-categorical actions was proved in \cite[Theorem 2.1]{beckmann-oberdieck} 
by a hands-on cocycle argument; the advantage of our proof is that it is more conceptual and generalizes to $\infty$-categorical actions. 
We also refer to \cite[\S3.6]{beckmann-oberdieck} for some simple examples where this obstruction is nontrivial. 
\end{remark}

\subsubsection{Base change of $G$-actions} 
Let $\cC$ be an $S$-linear category. 
As discussed in \S\ref{section-linear-categories}, for any morphism of schemes $T \to S$ we can form the base change 
category $\cC_T$ which is linear over~$T$. 
Formation of base change is functorial, i.e. gives a functor $\Cat_S \to \Cat_T$; 
therefore, any $G$-action $\phi \colon BG \to \Cat_{S}$ on $\cC$ induces a \emph{base changed $G$-action} 
$\phi_T \colon BG \to \Cat_T$ on $\cC_T$ by composition with this functor. 
Similarly, any $1$-categorical action $\phi_1 \colon G \to \pi_0(\Aut(\cC/S))$ of $G$ on $\cC$ 
induces a \emph{base changed $1$-categorical action} $(\phi_1)_T \colon G \to \pi_0(\Aut(\cC_T/T))$ on $\cC_T$, by 
composition with $\pi_0$ of the map $\Aut(\cC/S) \to \Aut(\cC_T/T)$. 
Note also that the base change map for group actions takes $n$-lifts of $\phi_1$ (in the sense of \S\ref{subsection-obstructions})  to $n$-lifts of $(\phi_1)_T$. 

By Remark~\ref{remark-HH-bc}, we also have a natural ring map $\HH^0(\cC/S) \to \HH^0(\cC_T/T)$, 
which is easily seen to be compatible with the actions of $\pi_0(\Aut(\cC/S))$ and $\pi_0(\Aut(\cC_T/T))$ under the map 
$\pi_0(\Aut(\cC/S)) \to \pi_0(\Aut(\cC_T/T))$. 

The next lemma follows by unwinding our construction of $\ob(\phi_1)$ and using functoriality of all the constructions involved. 

\begin{lemma}
\label{lemma-ob-functorial}
Let $\cC$ be an $S$-linear category, and let $\phi_1 \colon G \to \pi_0(\Aut(\cC/S))$ be a $1$-categorical action of $G$ on $\cC$. 
Then the obstruction class of Corollary~\ref{corollary-obstruction-HH} is functorial under base change in the sense that for any morphism $T \to S$, 
the natural map 
\begin{equation*}
\rH^3(BG, \HH^0(\cC/S)^{\times}) \to \rH^3(BG, \HH^0(\cC_T/T)^{\times}) 
\end{equation*} 
takes $\ob(\phi_1)$ to $\ob((\phi_1)_T)$. 
Moreover, the map from the set of equivalence classes of $2$-lifts of $\phi_1$ to 
the set of those of $(\phi_1)_T$ is compatible with the torsor structures under the map 
$\rH^2(BG, \HH^0(\cC/S)^{\times}) \to \rH^2(BG, \HH^0(\cC_T/T)^{\times})$. 
\end{lemma}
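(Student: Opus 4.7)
The plan is to observe that every construction that enters into the definition of the obstruction class and the torsor of $2$-lifts is itself functorial, so the statement reduces to assembling these functorialities. Concretely, base change $\cC \mapsto \cC_T = \cC \otimes_{\Dperf(S)} \Dperf(T)$ defines a functor $\Cat_S \to \Cat_T$; applying it to the identity functor of $\cC$ yields a map of $\infty$-groupoids $\Aut(\cC/S) \to \Aut(\cC_T/T)$, and hence, after delooping, a map of classifying spaces $\alpha \colon B\Aut(\cC/S) \to B\Aut(\cC_T/T)$. By construction, the induced map on $\pi_1$ is the homomorphism $\pi_0(\Aut(\cC/S)) \to \pi_0(\Aut(\cC_T/T))$ used to define $(\phi_1)_T$, while by Lemma~\ref{lemma-piAut} together with the base change equivalence for sheafy Hochschild cohomology (Remark~\ref{remark-HH-bc}), the induced map on $\pi_2$ is the map $\HH^0(\cC/S)^{\times} \to \HH^0(\cC_T/T)^{\times}$ arising from the base change ring map.

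Next, I would invoke the naturality of the Postnikov tower: the map $\alpha$ fits into a commutative diagram of fibrations
\begin{equation*}
\xymatrix{
\tau_{\leq 2} B\Aut(\cC/S) \ar[r] \ar[d] & \tau_{\leq 2} B\Aut(\cC_T/T) \ar[d] \\
\tau_{\leq 1} B\Aut(\cC/S) \ar[r] & \tau_{\leq 1} B\Aut(\cC_T/T)
}
\end{equation*}
whose vertical fibers are the Eilenberg--MacLane spaces $K(\HH^0(\cC/S)^{\times}, 2)$ and $K(\HH^0(\cC_T/T)^{\times}, 2)$ respectively, and the map between these fibers realizes the coefficient change. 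The obstruction class $\ob(\phi_1)$ produced in the proof of Lemma~\ref{lemma-infinity-action} is precisely the Postnikov $k$-invariant of the left-hand fibration pulled back along the composite $BG \to \tau_{\leq 1} B\Aut(\cC/S)$ classifying $\phi_1$; by the commutativity of the diagram above and functoriality of $k$-invariants, its image under the coefficient map equals the pullback of the right-hand $k$-invariant along $BG \to \tau_{\leq 1} B\Aut(\cC_T/T)$, which is exactly $\ob((\phi_1)_T)$.

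The same diagram also handles the torsor claim. When $\ob(\phi_1) = 0$, the set of equivalence classes of $2$-lifts is the set of homotopy classes of lifts of $\phi_1$ through the left-hand fibration, which is a torsor under $\rH^2(BG, \pi_1(\Aut(\cC/S)))$ by the standard obstruction-theoretic argument; the vertical base change map of fibrations induces the map of torsors under the coefficient change $\rH^2(BG,\HH^0(\cC/S)^{\times}) \to \rH^2(BG,\HH^0(\cC_T/T)^{\times})$. Thus the base change of a $2$-lift of $\phi_1$ is a $2$-lift of $(\phi_1)_T$, and this correspondence is equivariant for the two torsor actions.

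There is essentially no hard step here; the only point requiring care is to verify that the isomorphism $\pi_1(\Aut(\cC/S)) \cong \HH^0(\cC/S)^{\times}$ of Lemma~\ref{lemma-piAut} is natural in $\cC$ under base change, but this is immediate from the fact that the equivalence $\Map_{\Fun_S(\cC,\cC)}(\id_\cC,\id_\cC) \simeq \Map_{\Dqc(S)}(\cO_S, \cHH^*(\cC/S))$ comes from the universal property of mapping objects, which is preserved by the base change functor used in Remark~\ref{remark-HH-bc}.
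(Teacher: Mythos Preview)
Your proposal is correct and is precisely the same approach as the paper's, which states only that the lemma ``follows by unwinding our construction of $\ob(\phi_1)$ and using functoriality of all the constructions involved.'' You have simply made that unwinding explicit, tracing the base change functor through the Postnikov tower and identifying the induced maps on homotopy groups via Lemma~\ref{lemma-piAut} and Remark~\ref{remark-HH-bc}.
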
 

\subsubsection{Vanishing of obstructions on \'{e}tale neighborhoods} 

Recall that a ring $A$ is called a \emph{Grothendieck ring} if it is noetherian and for every  $\mathfrak{p} \in \Spec A$ the completion $A_{\mathfrak{p}} \to \widehat{A_{\mathfrak{p}}}$ of the local ring at $\mathfrak{p}$ is a regular map of rings  \cite[\href{https://stacks.math.columbia.edu/tag/07GG}{Tag 07GG}]{stacks-project}. 
(This is often called a \emph{G-ring}, but we will not use that terminology to avoid confusion with the group $G$.)
A scheme $S$ is called a \emph{Grothendieck scheme} if for every open affine $U \subset S$ the ring $\cO_S(U)$ is a Grothendieck ring. 
This is a very mild condition, which includes all excellent schemes. 

\begin{proposition}
\label{proposition-etale-vanishing}
Let $S$ be a Grothendieck scheme and let $\cC$ be a connected $S$-linear category. 
Let $\phi_1 \colon G \to \pi_0(\Aut(\cC/S))$ be a $1$-categorical action of $G$ on $\cC$. 
Let $s \in S$ be a point such that 
the characteristic of the residue field $\kappa(s)$ is prime to the order of $G$, and 
the obstruction $\ob((\phi_1)_s) \in \rH^3(BG, \kappa(s)^{\times})$ vanishes.
Then there exists an \'{e}tale neighborhood $U \to S$ of $s$ such that 
the obstruction $\ob((\phi_1)_U) \in \rH^3(BG, \Gamma(U, \cO_U)^{\times})$ vanishes, 
and thus the set of equivalence classes of $\infty$-lifts of $(\phi_1)_U$ is a nonempty $\rH^2(BG, \Gamma(U, \cO_U)^{\times})$-torsor. 
\end{proposition}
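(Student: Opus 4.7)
First, I would make a series of reductions. Since $\cC$ is connected, Remark~\ref{remark-connected-HH0} gives $\HH^0(\cC/S) = \Gamma(S,\cO_S)$ with the action of $\pi_0(\Aut(\cC/S))$ trivial, and connectedness also implies $\HH^i(\cC/S) = 0$ for $i < 0$ (the sheaves $\cHH^i(\cC/S)$ vanish for $i<0$, so their derived global sections vanish there too). Both properties are preserved under arbitrary base changes. Hence Corollary~\ref{corollary-obstruction-HH} applies to every base change: the obstruction $\ob(\phi_1)$ lies in $\rH^3(BG, \Gamma(S,\cO_S)^{\times})$ with trivial coefficient action, and by Lemma~\ref{lemma-ob-functorial} it is natural under base change. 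Shrinking $S$ to a sufficiently small affine open $\Spec(A)$ around $s$ on which $|G|$ is invertible (possible since $|G|$ is invertible in $\kappa(s)$ and the bad locus is closed), the problem reduces to showing that the image of $\ob(\phi_1)$ in $\rH^3(BG, A_\alpha^{\times})$ is trivial for some étale map $A \to A_\alpha$ together with a lift of $s$.

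Next, I would show that $\ob(\phi_1)$ becomes trivial after base change along $A \to \widehat{A_{\mathfrak{p}}}$, where $\mathfrak{p} \subset A$ corresponds to $s$. For this, consider the short exact sequence of trivial $G$-modules
\begin{equation*}
1 \to 1 + \widehat{\mathfrak{m}} \to \widehat{A_{\mathfrak{p}}}^{\times} \to \kappa(s)^{\times} \to 1.
\end{equation*}
Hensel's lemma applied to $x^{|G|} - u$ for $u \in 1 + \widehat{\mathfrak{m}}$ shows, using invertibility of $|G|$ in $\widehat{A_{\mathfrak{p}}}$, that $1 + \widehat{\mathfrak{m}}$ is uniquely $|G|$-divisible. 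Since $\rH^i(BG,-)$ is $|G|$-torsion for $i \geq 1$ by the standard transfer argument, this forces $\rH^i(BG, 1 + \widehat{\mathfrak{m}}) = 0$ for all $i \geq 1$. Therefore the comparison map $\rH^3(BG, \widehat{A_{\mathfrak{p}}}^{\times}) \to \rH^3(BG, \kappa(s)^{\times})$ is an isomorphism, and combining $\ob((\phi_1)_s) = 0$ with functoriality gives $\ob((\phi_1)_{\widehat{A_{\mathfrak{p}}}}) = 0$.

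The main obstacle is the descent from $\widehat{A_{\mathfrak{p}}}$ back to an étale neighborhood of $s$, and this is where the Grothendieck scheme hypothesis enters, through Popescu's smoothing theorem: the latter guarantees that the completion map $A_{\mathfrak{p}} \to \widehat{A_{\mathfrak{p}}}$ is a filtered colimit of smooth $A_{\mathfrak{p}}$-algebras. Using that $\rH^3(BG,-)$ commutes with filtered colimits of abelian groups (since $G$ is finite, and thus group cohomology is computed by a cochain complex finitely generated in each degree), combined with standard manoeuvres to replace a smooth neighborhood by an étale one, any class in $\rH^3(BG, A^{\times})$ whose image in $\rH^3(BG, \widehat{A_{\mathfrak{p}}}^{\times})$ vanishes already vanishes after base change along some étale neighborhood $A \to A_\alpha$ of $s$. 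This implication is the content of Lemma~\ref{lemma-etale-vanishing-ring} whose proof is credited to Bhatt in the acknowledgments. Setting $U = \Spec(A_\alpha)$ then produces the required étale neighborhood, and the torsor statement for $\infty$-lifts follows immediately from Corollary~\ref{corollary-obstruction-HH} applied to $\cC_U$.
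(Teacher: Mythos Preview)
Your proof is correct and follows the same architecture as the paper's: reduce to the affine case via connectedness and functoriality of the obstruction (Lemma~\ref{lemma-ob-functorial}), show the obstruction dies over the completion $\widehat{A_{\fp}}$, and then descend to an \'etale neighborhood via Popescu's theorem (this is Lemma~\ref{lemma-etale-vanishing-ring}, as you note). The one difference worth flagging is in the complete local ring step. The paper's Lemma~\ref{lemma-BG-clr} first treats the artinian case by inducting along the filtration $A/\fm_A^i$, using that each successive kernel $1+I$ is a $\kappa$-vector space and hence has vanishing higher $G$-cohomology, and then passes to the general complete local case with an $\rR^1\lim$ / Mittag--Leffler argument. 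You instead observe directly via Hensel's lemma that $1 + \widehat{\fm}$ is uniquely $|G|$-divisible, which kills its positive-degree group cohomology in one stroke; this is a slightly cleaner route to the same isomorphism $\rH^3(BG, \widehat{A_{\fp}}^{\times}) \cong \rH^3(BG, \kappa(s)^{\times})$. One minor imprecision: you apply Popescu to $A_{\fp} \to \widehat{A_{\fp}}$, but to produce an \'etale neighborhood of $s$ in $\Spec A$ with a section one should apply it to the composite $A \to \widehat{A_{\fp}}$ (which is regular because $A$ is a Grothendieck ring), as the paper does; since you defer the details to Lemma~\ref{lemma-etale-vanishing-ring} anyway, the argument is unaffected.
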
 

\begin{proof}
As $\cC$ is connected, 
the natural map $\Gamma(T, \cO_T) \to \HH^0(\cC_T/T)$ is an isomorphism 
for any $T \to S$, and 
$G$ acts trivially on $\HH^0(\cC_T/T)^{\times}$ (Remark~\ref{remark-connected-HH0}); 
in particular, the obstructions indeed lie in the stated groups. 
Further, as the claim is local on $S$, we may assume $S = \Spec A$ is affine. 
By functoriality of the obstruction under base change (Lemma~\ref{lemma-ob-functorial}), the result is then a consequence of the following lemma. 
\end{proof} 

\begin{lemma}
\label{lemma-etale-vanishing-ring}
Let $A$ be a Grothendieck ring. 
Let $\alpha \in \rH^n(BG, A^{\times})$ where $n \geq 1$ and $A^{\times}$ has the trivial $G$-action. 
Let $\fp \in \Spec A$ be a point such that the characteristic of $\kappa(\fp)$ is prime to the order of $G$, and 
such that $\alpha$ maps to zero under $\rH^n(BG, A^{\times}) \to \rH^n(BG, \kappa(\fp)^{\times})$. 
Then there exists an affine \'{e}tale neighborhood $\Spec(B) \to \Spec(A)$ of $\fp$ such that $\alpha$ 
maps to zero under $\rH^n(BG, A^{\times}) \to \rH^n(BG, B^{\times})$. 
\end{lemma}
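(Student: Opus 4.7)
The plan is to reduce to working over the henselization $A^h = A_{\fp}^h$ of $A$ at $\fp$, prove vanishing of $\alpha$ there, and then descend to an \'{e}tale neighborhood by a filtered colimit argument.

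For the descent step, recall that $A^h$ is by construction the filtered colimit of the pointed affine \'{e}tale neighborhoods $B$ of $\fp$. Since the unit functor commutes with filtered colimits of rings and since $\rH^n(BG, -)$ for the finite group $G$ commutes with filtered colimits of abelian groups (the bar complex involves only finite products of the coefficients), we have $\rH^n(BG, (A^h)^{\times}) = \colim_B \rH^n(BG, B^{\times})$, so vanishing over $A^h$ implies vanishing over some $B$.

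The heart of the proof is to show that if $R$ is any henselian local ring with residue field $k$ in which $N \coloneqq |G|$ is invertible, then the natural map $\rH^n(BG, R^{\times}) \to \rH^n(BG, k^{\times})$ is injective for all $n \geq 1$. The strategy uses the short exact sequence of trivial $G$-modules
\[
1 \to 1 + \fm_R \to R^{\times} \to k^{\times} \to 1
\]
(surjectivity holds because $R$ is local). The key claim is that $1 + \fm_R$ is uniquely $N$-divisible: given $u \in 1 + \fm_R$, the polynomial $T^N - u$ reduces modulo $\fm_R$ to $T^N - 1$, which has $T = 1$ as a simple root precisely because $N$ is a unit (the derivative $N T^{N-1}$ is nonzero there), so Hensel's lemma yields a unique lift $v \in 1 + \fm_R$ with $v^N = u$. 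Since $1 + \fm_R$ is a trivial, uniquely $N$-divisible $G$-module and $\rH^i(BG, -)$ is annihilated by $|G| = N$ for $i \geq 1$, we deduce $\rH^i(BG, 1 + \fm_R) = 0$ for $i \geq 1$, and the long exact sequence delivers the desired injectivity.

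Applying this with $R = A^h$, which is henselian local with residue field $\kappa(\fp)$ and in which $N$ is a unit because $\characteristic \kappa(\fp)$ is coprime to $N$, gives the vanishing of $\alpha$ over $A^h$, whence the vanishing over some \'{e}tale neighborhood $B$ by the colimit argument. The main technical input is Hensel's lemma applied to $T^N - u$, which is precisely where the coprimality hypothesis on $N$ and $\characteristic \kappa(\fp)$ is used. Notably, this approach does not seem to require the Grothendieck hypothesis on $A$; that hypothesis would enter if one instead worked through the completion $\widehat{A_{\fp}}$ and invoked Artin approximation to pass back to an \'{e}tale neighborhood, which is presumably the route taken in the excerpt.
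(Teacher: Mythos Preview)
Your proof is correct and takes a genuinely different route from the paper's. The paper works with the completion $\widehat{A_{\fp}}$: it first proves (via an artinian reduction and a Mittag--Leffler argument for the inverse system $A/\fm^i$) that $\rH^n(BG, \widehat{A_{\fp}}^{\times}) \to \rH^n(BG, \kappa(\fp)^{\times})$ is an isomorphism for complete local rings, and then invokes Popescu's theorem to write $\widehat{A_{\fp}}$ as a filtered colimit of smooth $A$-algebras, finally passing to an \'etale neighborhood by locally splitting a smooth map. This is exactly where the Grothendieck hypothesis is consumed, since Popescu requires $A \to \widehat{A_{\fp}}$ to be regular.

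Your approach via the henselization $A_{\fp}^h$ sidesteps all of this. The henselization is already the filtered colimit of (pointed) affine \'etale neighborhoods with trivial residue extension, so the descent step is immediate once you know vanishing over $A_{\fp}^h$; and your Hensel's-lemma argument that $1+\fm_R$ is uniquely $|G|$-divisible gives the vanishing of $\rH^i(BG, 1+\fm_R)$ for $i \geq 1$ in one stroke, replacing the paper's two-stage artinian-then-complete analysis. As you correctly observe, your argument does not use the Grothendieck hypothesis at all, so it in fact proves a slightly stronger statement than the paper's lemma. (Your guess about the paper's method is essentially right, with Popescu playing the role you assigned to Artin approximation.)
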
 

Before proving the result in general, we handle the case of complete local rings: 

\begin{lemma}
\label{lemma-BG-clr}
Let $A$ be a complete local ring with residue field $\kappa$ 
of characteristic prime to the order of $G$. 
Then for $n \geq 1$ the map $\rH^n(BG, A^{\times}) \to \rH^n(BG, \kappa^{\times})$ is an isomorphism, 
where $A^{\times}$ and $\kappa^{\times}$ have the trivial $G$-actions.  
\end{lemma}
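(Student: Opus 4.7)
The plan is to exploit the short exact sequence of (trivial) $G$-modules
\begin{equation*}
1 \to 1 + \mathfrak{m} \to A^{\times} \to \kappa^{\times} \to 1,
\end{equation*}
where $\mathfrak{m} \subset A$ is the maximal ideal; the surjectivity on the right uses that $A$ is local, so that any unit of $\kappa$ lifts to a unit of $A$ and conversely. The associated long exact sequence in group cohomology reduces the statement to the vanishing $\rH^n(BG, 1+\mathfrak{m}) = 0$ for all $n \geq 1$, since then $\rH^n(BG, A^{\times}) \to \rH^n(BG, \kappa^{\times})$ is sandwiched between zeros on either side.

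For this vanishing, I would invoke the standard transfer--restriction argument: for any finite group $G$ and any $G$-module $M$, multiplication by $|G|$ annihilates $\rH^n(BG, M)$ for all $n \geq 1$. Consequently, if multiplication by $|G|$ acts as an automorphism on $M$, then $\rH^n(BG, M) = 0$ for $n \geq 1$. Applied to $M = 1 + \mathfrak{m}$ (with trivial $G$-action), this reduces the task to verifying that the map $\sigma \colon 1 + \mathfrak{m} \to 1 + \mathfrak{m}$, $\sigma(x) = x^{|G|}$, is a bijection.

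This last step is where the hypotheses on characteristic and completeness are used, and is the main technical input (though not really an obstacle). Since $\mathrm{char}(\kappa)$ is prime to $|G|$, the element $|G| \in A$ has nonzero image in $\kappa$; as $A$ is local, this means $|G| \in A^{\times}$. For any $u \in 1 + \mathfrak{m}$, I would apply Hensel's lemma, which is available because $A$ is a complete local ring, to the polynomial $f(X) = X^{|G|} - u \in A[X]$ at the approximate root $X = 1$: we have $f(1) = 1 - u \in \mathfrak{m}$ while $f'(1) = |G| \in A^{\times}$, so Hensel's lemma produces a unique $\tilde x \in 1+\mathfrak{m}$ with $\tilde x^{|G|} = u$. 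This simultaneously gives surjectivity and injectivity of $\sigma$, completing the proof.
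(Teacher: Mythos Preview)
Your proof is correct but takes a genuinely different route from the paper's. The paper proceeds in two stages: first it handles the artinian case by filtering $A$ through the quotients $A/\mathfrak{m}^i$ and observing that each successive kernel $1 + I$ (with $I$ a square-zero ideal) is isomorphic as an abelian group to a $\kappa$-vector space, hence has vanishing higher $G$-cohomology since $|G|$ is invertible in $\kappa$; then it passes to the general complete case via the inverse limit $A^{\times} = \lim_i (A/\mathfrak{m}^i)^{\times}$ and a Milnor-type $\rR^1\lim$ exact sequence, checking the relevant Mittag--Leffler condition. Your approach bypasses both the filtration and the limit argument by showing in one stroke that $1+\mathfrak{m}$ is uniquely $|G|$-divisible, using Hensel's lemma. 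This is cleaner and more direct; the paper's approach is perhaps more elementary in that it avoids invoking Hensel, trading that for the inverse-limit bookkeeping.
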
 

\begin{proof}
First consider the case where $A$ is artinian. 
Then the maximal ideal satisfies $\fm_A^i = 0$ for some $i \geq 1$, so by considering 
the factorization $A = A/\fm_A^i \to A/\fm_A^{i-1} \to \cdots \to \kappa$, 
we reduce to proving the following claim: if $A \to B$ is a surjection of artinian local rings 
whose kernel $I$ is annihilated by $\fm_A$, then for $n \geq 1$ the map 
$\rH^n(BG, A^{\times}) \to \rH^n(BG, B^{\times})$ is an isomorphism. 
In this case, we have an exact sequence 
\begin{equation*}
0 \to 1 + I \to A^{\times} \to B^{\times} \to 0, 
\end{equation*}
and as an abelian group $1+I$ is isomorphic to the $\kappa$-vector space $I$; 
as $|G|$ is invertible in $\kappa$ this implies $\rH^n(BG, 1+I) = 0$ for $n \geq 1$, 
and hence the claim. 

Now consider the case of a general complete local ring $A$. 
Then $A^{\times} = \lim_i A_i^{\times}$ where $A_i = A/\fm_A^i$, 
so we have an exact sequence 
\begin{equation*}
0 \to \rR^1\lim_i \rH^{n-1}(BG, A_i^{\times}) \to \rH^n(BG, A^{\times}) 
\to \lim_i \rH^n(BG, A_i^{\times}) \to 0. 
\end{equation*} 
By the artinian case handled above, it suffices to show that the first term in this sequence vanishes. 
But if $n \geq 2$ then again by the artinian case the transition maps for the system $\rH^{n-1}(BG, A_i^{\times})$ are isomorphisms, while if $n = 1$ then the transition maps are surjective; in either case, the system is Mittag-Leffler and we conclude that their $\rR^1 \lim_i$ vanishes. 
\end{proof}

\begin{proof}[Proof of Lemma~\ref{lemma-etale-vanishing-ring}]
By assumption $\alpha$ dies in $\rH^n(BG, \kappa(\fp)^{\times})$, 
so by Lemma~\ref{lemma-BG-clr} it also dies in $\rH^n(BG, \widehat{A_{\fp}}{^{\times}})$. 
As $A$ is a Grothendieck ring, the composition $A \to A_{\fp} \to \widehat{A_{\fp}}$ is a regular map of noetherian rings, 
so by Popescu's theorem \cite[\href{https://stacks.math.columbia.edu/tag/07GC}{Tag 07GC}]{stacks-project} we can write $\widehat{A_{\fp}} = \colim A_i$ as a filtered colimit of smooth ring maps $A \to A_i$. 
Then $\rH^n(BG, \widehat{A_{\fp}}{^{\times}}) = \colim \rH^n(BG, A_i^{\times})$ 
as $\widehat{A_{\fp}}{^\times} = \colim A_i^{\times}$, and hence $\alpha$ must 
die in $\rH^n(BG, A_i^{\times})$ for some $i$. 
As $\Spec(A_i) \to \Spec(A)$ is smooth and its image contains $\fp$, 
we can find \'{e}tale neighborhood $\Spec(B) \to \Spec(A)$ of $\fp$ over which 
$\Spec(A_i) \to \Spec(A)$ has a section. 
In other words, the map $A \to B$ factors through $A \to A_i$, 
and therefore $\alpha$ dies in $\rH^n(BG, B^{\times})$. 
\end{proof} 

\begin{example}
\label{example-etale-vanishing}
In the conclusion of Lemma~\ref{lemma-etale-vanishing-ring}, ``\'{e}tale neighborhood'' cannot be replaced by ``Zariski neighborhood'' in general. 
Indeed, let $G = \bZ/m$ for $m \geq 2$,  
and let $A = \bC[x,x^{-1}]$. 
Note that $A^{\times}  \cong \bC^{\times} \oplus \bZ$, with $(a,b) \in \bC^{\times} \oplus \bZ$ corresponding to $ax^{b}$. 
If $n> 0$ is even, then $\rH^n(BG, \bC^{\times}) = 0$ and $\rH^{n}(BG, \bZ) = \bZ/m$. 
Hence every element of $\rH^n(BG, A^{\times}) = \bZ/m$ is killed by the 
map $\rH^n(BG, A^{\times}) \to \rH^n(BG, \bC^{\times})$ induced by any closed point $\Spec \bC \to \Spec A$. 
However, it is easy to see that for any $f \in A$, the map to the localization 
$A \to A_f$ induces a split injection $A^{\times} \to (A_f)^{\times}$ on groups of units, 
and hence $\rH^n(BG, A^{\times}) \to  \rH^n(BG, (A_f)^{\times})$ is also a split injection. 
It follows that any $0 \neq \alpha \in \rH^n(BG, A^{\times})$ dies after restriction to any closed point of $\Spec A$, but has nonzero image in $\rH^n(BG, \Gamma(U, \cO_U)^{\times})$ for any Zariski open $U \subset \Spec A$. 
\end{example} 

\subsection{Invariant categories} 
\label{subsection-invariant-categories}
The (co)invariants for a group action can be formulated in the $\infty$-categorical setting as follows. 

\begin{definition}
Let $\ccD$ be an $\infty$-category, let $X \in \ccD$ be an object, and let 
$\phi \colon BG \to \ccD$ be an action of $G$ on $X$. 
The \emph{$G$-invariants $X^G$} and \emph{$G$-coinvariants $X_G$} of the action $\phi$ are defined 
by 
\begin{equation*} 
X^G = \lim(\phi) \qquad \text{and} \qquad X_G = \colim(\phi) , 
\end{equation*} 
provided the displayed limit and colimit exist. 
\end{definition} 

We will be interested in the case where $\ccD = \Cat_S$ is the $\infty$-category of $S$-linear categories over some base scheme $S$, and $G$ acts on an $S$-linear category $\cC \in \Cat_S$. 
Note that $\Cat_S$ has all limits and colimits; 
see for instance~\cite[\S2.1]{akhil} where this result is explained for $\Cat_{\infty}^{\mathrm{st}}$, the $\infty$-category of small stable $\infty$-categories, and it similarly holds for $\Cat_S$. 
%(cf.~\cite[\S2.1]{akhil} where this result is explained for $\Cat^{\mathrm{st}}_{\infty}$ in place of $\Cat$), 
The $G$-invariants $\cC^G$ and coinvariants $\cC_G$ thus always exist in this situation. 
A basic example to keep in mind is that for a scheme $X$ with a $G$-action, $\Dperf(X)^G \simeq \Dperf([X/G])$ where $[X/G]$ is the quotient stack. 

\subsubsection{Base change of $G$-(co)invariants} 
The operations of taking $G$-coinvariants commutes with 
base change, while the same is true for $G$-invariants if the order of $G$ is invertible on the base scheme: 

\begin{lemma}
\label{lemma-invariants-bc}
Let $\cC$ be an $S$-linear category with a $G$-action, 
and let $T \to S$ be a morphism of schemes. 
\begin{enumerate}
\item \label{coinvariants-bc}
There is an equivalence $(\cC_{G})_T \simeq (\cC_T)_G$ of 
$T$-linear categories, where $(\cC_T)_G$ denotes the $G$-coinvariants for the induced $G$-action on the base change $\cC_T$. 
\item \label{invariants-bc} If $|G|$ is invertible on $S$, then there is an equivalence $(\cC^{G})_T \simeq (\cC_T)^G$ of $T$-linear categories. 
\end{enumerate}
\end{lemma}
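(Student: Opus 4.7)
The plan is to prove \eqref{coinvariants-bc} using the fact that base change preserves all colimits in $\Cat_S$, and then to deduce \eqref{invariants-bc} by combining \eqref{coinvariants-bc} with the ambidexterity statement that, when $|G|$ is invertible on $S$, the natural norm map identifies $\cC_G$ with $\cC^G$.

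For \eqref{coinvariants-bc}, I would argue purely formally. The base change functor
\[ (-)_T = - \otimes_{\Dperf(S)} \Dperf(T) \colon \Cat_S \to \Cat_T \]
is defined as a tensor product, and tensor products in the symmetric monoidal $\infty$-category of small idempotent-complete stable $\infty$-categories preserve colimits in each variable (this is a standard property used throughout \cite[\S2.1]{akhil}). Writing the given action as $\phi \colon BG \to \Cat_S$, and noting that the base changed action is literally $\phi$ postcomposed with $(-)_T$, it follows that
\[ (\cC_G)_T = \bigl(\colim_{BG} \phi\bigr) \otimes_{\Dperf(S)} \Dperf(T) \simeq \colim_{BG} (\phi \otimes_{\Dperf(S)} \Dperf(T)) = (\cC_T)_G. \]

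For \eqref{invariants-bc}, the key additional input is the following ambidexterity statement: if $|G|$ is invertible on $S$, then for any $S$-linear category $\cD$ with a $G$-action, the canonical norm map yields an equivalence $\cD_G \xrightarrow{\sim} \cD^G$ in $\Cat_S$. This is the categorical counterpart of Maschke's theorem: the averaging idempotent $e = \frac{1}{|G|} \sum_{g \in G} g$ produces a splitting at the $\infty$-categorical level, so that $BG$-indexed colimits and limits in $\Cat_S$ coincide when $|G|$ is invertible. (This may be deduced either from the general $\infty$-semiadditivity formalism of Hopkins--Lurie, or by constructing the norm map and its inverse explicitly using $e$.) Granting the norm equivalence on both the $S$- and $T$-sides, and noting that the norm equivalence is natural under base change (so the base change of the $S$-side norm equivalence is the $T$-side one, using that $|G|$ is still invertible on $T$), I conclude
\[ (\cC^G)_T \simeq (\cC_G)_T \simeq (\cC_T)_G \simeq (\cC_T)^G, \]
where the middle equivalence is \eqref{coinvariants-bc} and the outer two are applications of the norm equivalence.

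The main obstacle will be rigorously setting up the norm equivalence in the present $\infty$-categorical framework for $S$-linear categories: one must ensure that the natural norm morphism is indeed an equivalence under the hypothesis that $|G|$ is invertible on $S$, and that this equivalence is compatible with base change along $T \to S$ (so that the final chain of equivalences above is well-defined). Once this ambidexterity input is in hand, everything else reduces to the formal compatibility of base change with colimits, which also immediately handles \eqref{coinvariants-bc}.
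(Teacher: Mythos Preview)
Your proposal is correct and follows essentially the same route as the paper. The only minor difference is in the justification of \eqref{coinvariants-bc}: the paper observes that the base change functor $\Cat_S \to \Cat_T$ has a right adjoint (restriction of scalars along $\Dperf(S) \to \Dperf(T)$) and hence preserves colimits, whereas you invoke directly that the tensor product preserves colimits in each variable; both are standard formal arguments to the same effect. For \eqref{invariants-bc} the paper proceeds exactly as you do, citing the norm equivalence $\cC_G \xrightarrow{\sim} \cC^G$ from \cite[Proposition~3.4]{HH} (stated there over a field, but with the same proof over a general base) and then reducing to \eqref{coinvariants-bc}.
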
 

\begin{proof}
\eqref{coinvariants-bc} The base change functor $\Cat_S \to \Cat_T$, $\cC \mapsto \cC_T$ has a right adjoint, given by the functor which regards a $T$-linear category as an $S$-linear category via restriction along $\Dperf(S) \to \Dperf(T)$, and therefore commutes with colimits. 

\medskip \noindent
\eqref{invariants-bc} 
There is a canonical norm functor $\Nm \colon \cC_G \to \cC^G$, 
which under our assumption on $|G|$ is an equivalence; see 
\cite[Proposition 3.4]{HH} where this result is stated for $S$ a field, but 
the same proof works in general. 
Therefore, the claim reduces to~\eqref{coinvariants-bc} proved above. 
\end{proof}

\subsubsection{$G$-invariants of smooth proper categories} 
Passage to $G$-invariants preserves smooth and properness of a category, as long as $|G|$ is invertible on the base scheme: 

\begin{proposition}
\label{proposition-invariants-smooth-proper}
Let $\cC$ be a smooth proper $S$-linear category, where $|G|$ is 
invertible on $S$. 
Then the $G$-invariant category $\cC^{G}$ is also smooth and proper. 
\end{proposition}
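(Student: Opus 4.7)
The plan is to use the characterization (invoked earlier in the paper via \cite[Lemma~4.8]{NCHPD}) that an $S$-linear category is smooth and proper if and only if it is dualizable in $\Cat_S$. Since $\cC$ is dualizable with dual $\cC^{op}$, I want to show $\cC^G$ inherits dualizability. The key input is the invertibility of $|G|$ on $S$, which provides both the $BG$-ambidexterity from Lemma~\ref{lemma-invariants-bc}\eqref{invariants-bc} and an averaging idempotent $\frac{1}{|G|}\sum_{g \in G} \phi(g)$.

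The strategy is to realize $\cC^G$ as the category of modules over a separable dualizable algebra $A = \bigoplus_{g \in G} \phi(g)$ in the monoidal $\infty$-category $\Fun_S(\cC, \cC) \simeq \cC^{op} \otimes_{\Dperf(S)} \cC$; this $A$ is the image of the group algebra under the monoidal action encoding $\phi$. By a monadicity argument, combined with the ambidexterity of Lemma~\ref{lemma-invariants-bc}\eqref{invariants-bc}, we have $\cC^G \simeq \Mod_A(\cC)$. The averaging idempotent provides an $A$-bimodule section of the multiplication $A \otimes A \to A$, making $A$ \emph{separable}; moreover, $A$ is a finite direct sum of invertible objects (the autoequivalences $\phi(g)$), hence dualizable in $\Fun_S(\cC, \cC)$.

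Given that $A$ is a dualizable separable algebra in a smooth proper monoidal $\infty$-category, $\Mod_A(\cC)$ is itself smooth and proper by standard Morita theory. Concretely, \emph{properness} follows because mapping objects between $A$-modules (which are retracts of free modules $A \otimes X$ for compact $X \in \cC$) remain perfect, using both properness of $\cC$ and dualizability of $A$. \emph{Smoothness} follows from separability: the diagonal bimodule ${}_A A_A$ is realized as a retract of the free bimodule $A \otimes A$ via the separability splitting; since $\cC$ is smooth, $A \otimes A$ is compact as a free bimodule on $\id_\cC$, so ${}_A A_A$ is compact in $A$-bimodules, which is the compactness of $\id_{\cC^G}$ required for smoothness of $\cC^G$. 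The \emph{main obstacle} is formalizing this Morita-theoretic transfer of dualizability cleanly in the $\infty$-categorical setting; this is standard but technical, with the essential inputs being the separability splitting and dualizability of $A$, both of which rely on the invertibility of $|G|$ on~$S$.
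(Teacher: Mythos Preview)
Your approach is correct and is a genuine repackaging of the paper's argument in Morita-theoretic language. The paper treats properness and smoothness separately: properness exactly as you do (any $M\in\cC^G$ is a retract of the induced object, so $\cHom_S(M,N)$ is a retract of $\cHom_S(\Forg M,\Forg N)\in\Dperf(S)$), while for smoothness it quotes three lemmas from \cite{HH} to obtain
\[
\Fun_S\bigl(\Ind(\cC)^G,\Ind(\cC)^G\bigr)\simeq \Fun_S\bigl(\Ind(\cC),\Ind(\cC)\bigr)^{G\times G},
\]
under which $\id_{\Ind(\cC)^G}$ corresponds to $\bigoplus_g\Ind(\phi_g)$; then \cite[Lemma~3.7]{HH} (which uses $|G|^{-1}\in\cO_S$) says compactness in the $(G\times G)$-invariants is detected after forgetting, and the dualizability of $\cC$ identifies compacts of $\Fun_S(\Ind\cC,\Ind\cC)$ with $\Fun_S(\cC,\cC)$, which visibly contains $\bigoplus_g\phi_g$. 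Your separable-algebra argument is the same computation in disguise: $\Fun_S(\Ind\cC,\Ind\cC)^{G\times G}$ \emph{is} the category of $A$-bimodules for $A=\bigoplus_g\phi_g$, the object $\bigoplus_g\phi_g$ with its $(G\times G)$-structure \emph{is} the diagonal bimodule ${}_AA_A$, and the paper's forgetful-detects-compactness lemma is exactly the retract ${}_AA_A\hookrightarrow A\otimes A$ coming from separability.

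What your framing buys is a cleaner conceptual picture that generalizes (e.g.\ to separable Hopf-algebra actions); what the paper's buys is that the needed identifications are already written down in \cite{HH}, whereas you leave the Morita identification $\Fun_S(\Ind\cC^G,\Ind\cC^G)\simeq A\text{-bimod}$ and the compatibility $\Ind(\Mod_A\cC)\simeq\Mod_A(\Ind\cC)$ as ``standard but technical''. One minor note: your opening plan to prove dualizability of $\cC^G$ directly is not what you actually execute---you end up checking smoothness and properness separately, just like the paper---so you could drop that sentence or carry it through (e.g.\ via $(\cC^G)^\vee\simeq(\cC^\vee)_G\simeq(\cC^\vee)^G$ using ambidexterity).
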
 

\begin{proof}
Properness of $\cC^G$ is the assertion that for any objects $E, F \in \cC^G$ their mapping object $\cHom_S(E,F) \in \Dqc(S)$ lies in $\Dperf(S)$ (see e.g.~\cite[Lemma 4.7]{NCHPD}). 
This mapping object can be described by the formula 
\begin{equation*}
\cHom_S(E,F) = \cHom_S(\Forg(E), \Forg(F))^G, 
\end{equation*} 
where $\Forg \colon \cC^G \to \cC$ is the forgetful functor, 
and the right side is the group invariants for the induced $G$-action 
on $\cHom_S(\Forg(E), \Forg(F))$  (see e.g. \cite[\S3.1]{HH}). 
The object $\cHom_S(\Forg(E), \Forg(F))$ is perfect by the properness of $\cC$. 
By our assumption on $|G|$ the object $\cHom_S(E,F)$ is a summand of $\cHom_S(\Forg(E), \Forg(F))$, hence also perfect. 
Indeed, more generally if $A \in \Dperf(S)$ is an object equipped with a $G$-action, then $A^G$ is a summand of $A$: if $\phi_g$ denotes the automorphism of $A$ corresponding to $g \in G$, then $\frac{1}{|G|}\sum_{g \in G} \phi_g \colon A \to A^G$ gives the splitting. 
%, hence also perfect. 

Smoothness of $\cC^G$ is the assertion that $\id_{\Ind(\cC^G)} \in \Fun_{S}(\Ind(\cC^G), \Ind(\cC^G))$ 
is a compact object 
(see~\cite[\S4]{NCHPD} for background on ind completions and smoothness of categories). 
To prove this, we use some results from \cite{HH}; we note that while results there are stated for categories linear over a field, 
they also hold relative to a base scheme $S$ by the same arguments. 
First we note that $\Ind(\cC^G) \simeq \Ind(\cC)^G$; indeed, by \cite[Proposition 3.4 and Lemma 3.5]{HH}, the assertion is equivalent to the analogous statement $\Ind(\cC_G) \simeq \Ind(\cC)_G$ for coinvariant categories, which holds by \cite[Lemma 2.3(1)]{HH}. 
%to the assertion for coinvariant categories 
%\cite[Lemma 3.6]{HH} we have $\cC^G \simeq (\Ind(\cC)^G)^c$. 
%On the other hand, by 
%$\Ind(\cC^G) \simeq \Ind(\cC)^G$. 
By \cite[Lemma 4.7]{HH} (or rather the corresponding result for presentable $S$-linear categories, see \cite[Remark 4.6]{HH}), we have an equivalence 
\begin{equation*}
\Fun_S(\Ind(\cC)^G, \Ind(\cC)^G) \simeq \Fun_S(\Ind(\cC), \Ind(\cC))^{G \times G} 
\end{equation*} 
which sends $\id_{\Ind(\cC)^G}$ to the functor 
$\bigoplus_{g \in G} \Ind(\phi_g) \colon \Ind(\cC) \to \Ind(\cC)$, where 
$\phi_g \colon \cC \to \cC$ is the equivalence corresponding to the action of $g \in G$ on $\cC$. 
By \cite[Lemma 3.7]{HH} the functor $\bigoplus_{g \in G} \Ind(\phi_g)$ is compact as an object of 
$\Fun_S(\Ind(\cC), \Ind(\cC))^{G \times G}$ if and only if it is compact as an object of 
$\Fun_S(\Ind(\cC), \Ind(\cC))$. 

The $S$-linear category $\cC$, being smooth and proper, 
is dualizable with dual the opposite category $\cC^{\op}$ (see \cite[Lemma 4.8]{NCHPD}), 
and the presentable $S$-linear category $\Ind(\cC)$ is dualizable with dual $\Ind(\cC^{\op})$  
(see \cite[Lemma 4.3]{NCHPD}). 
Thus we have equivalences 
\begin{align*}
\Fun_S(\Ind(\cC), \Ind(\cC)) & \simeq 
\Ind(\cC^{\op}) \otimes_{\Dqc(S)} \Ind(\cC) \\ 
& \simeq \Ind \left( \cC^{\op} \otimes_{\Dperf(S)} \cC \right) \\ 
& \simeq \Ind \left( \Fun_{S}(\cC, \cC) \right) 
\end{align*} 
where the second line holds by the definition of the tensor product of $S$-linear categories (see \cite[\S2.3]{NCHPD}). 
This shows that for a smooth proper $S$-linear category $\cC$, 
the compact objects of $\Fun_S(\Ind(\cC), \Ind(\cC))$ are precisely those in the image of 
$\Fun_S(\cC, \cC) \to \Fun_S(\Ind(\cC), \Ind(\cC))$. 

In particular, we see that the functor $\bigoplus_{g \in G} \Ind(\phi_g)$ from above is compact as an object of $\Fun_S(\Ind(\cC), \Ind(\cC))$, 
because it is the image of the object $\bigoplus_{g \in G} \phi_g \in \Fun_S(\cC, \cC)$. 
Hence $\cC^G$ is smooth. 
\end{proof} 

%%%%%%%%%%%%%%%%%%%%%%%%%%%%%%%%%%%%%%%%%%%%%%%%%%%%%%

\section{Enriques categories and their CY2 covers} 
\label{section-enriques} 

In this section we work over an algebraically closed field $k$ with $\characteristic(k) \neq 2$. 
We define Enriques and CY2 categories, explain the correspondence 
between them via residual $\bZ/2$-actions, and describe the examples of interest for this paper. 

\subsection{Definitions}
Recall that a smooth proper $k$-linear category $\cC$ admits a \emph{Serre functor}, i.e. an autoequivalence $\rS_{\cC}$ such that there are natural isomorphisms  
\begin{equation*}
\cHom_k(E, \rS_{\cC}(F)) \simeq \cHom_k(F, E)^{\svee} 
\end{equation*}
for $E, F \in \cC$.  
Recall also if $\cC \subset \Db(X)$ is a semiorthogonal component of the derived category of a variety, 
then $\cHom_k(E, F)$ coincides with the classical derived Hom complex $\mathrm{RHom}(E, F)$. 

\begin{definition}
Let $\cC$ be a smooth proper $k$-linear category. 
\begin{enumerate} 
\item $\cC$ is an \emph{Enriques category} if 
it is equipped with a $\bZ/2$-action whose generator $\tau$ is a nontrivial autoequivalence of $\cC$ 
satisfying $\rS_{\cC} \simeq \tau \circ [2]$. 
\item $\cC$ is a \emph{2-Calabi--Yau (CY2) category} if $\rS_{\cC} \simeq [2]$. 
\end{enumerate}
\end{definition} 

\begin{remark}
\label{remark-connected-enriques}
Let $\cC$ be a $k$-linear category, and
suppose $\tau$ is an autoequivalence of $\cC$ 
satisfying $\tau \circ \tau \simeq \id_{\cC}$. 
Then by Corollary~\ref{corollary-obstruction-HH} there is a class 
$\ob(\tau) \in \rH^3(B(\bZ/2), \HH^0(\cC/k)^{\times})$, which 
vanishes if and only if $\tau$ is the generator for a $\bZ/2$-action 
on $\cC$, in which case the set of such actions is a torsor 
under $\rH^2(B(\bZ/2), \HH^0(\cC/k)^{\times})$. 
If $\cC$ is connected, then $\HH^0(\cC/k) = k$ and 
$\rH^3(B(\bZ/2), k^{\times}) \cong \bZ/2$ and $\rH^2(B(\bZ/2), k^{\times}) = 0$ (where the $\bZ/2$-action on $k^{\times}$ is trivial), 
so a $\bZ/2$-action with $\tau$ as a generator is unique if it exists. 
This remark applies to all of the Enriques categories we consider in examples below, as they will all be connected. 
\end{remark} 

\begin{example}
\begin{enumerate}
\item If $S$ is an Enriques surface, then $\Db(S)$ is an Enriques category with $\bZ/2$-action generated by tensoring by $\omega_S$, cf.~Example~\ref{example-enriques-K3}. 
\item If $T$ is a smooth proper surface with $K_T = 0$, i.e. $T$ is a K3 or abelian surface, then $\Db(T)$ is a CY2 category. 
\end{enumerate}
\end{example} 

\begin{remark}
We use the term \emph{K3 category} to mean a CY2 category whose Hochschild homology agrees with that of the derived category of a K3 surface. 
All of the explicit examples of CY2 categories considered in this paper will in fact be K3 categories. Note that a K3 category is automatically connected, because for a CY2 category $\cC$ there is an isomorphism 
$\HH^i(\cC/k) \cong \HH_{2-i}(\cC/k)$. 
\end{remark} 

\subsection{Enriques-CY2 correspondence} 
An interesting feature of invariant categories is that they come equipped with a natural group action. In the $\bZ/2$-case, this leads to an involution on the category of $k$-linear categories equipped with a $\bZ/2$-action. 

\begin{lemma}
\label{lemma-reconstruction} 
Let $\cC$ be a $k$-linear category with a $\bZ/2$-action, 
and let $\ccD = \cC^{\bZ/2}$ be the invariant category. 
Then there is a natural $\bZ/2$-action on $\ccD$, called the \emph{residual $\bZ/2$-action}, such that there is an equivalence 
$\cC \simeq \ccD^{\bZ/2}$. 
%Moreover, the equivalence $\cC \simeq \ccD^{\bZ/2}$ is equivariant with respect to the given $\bZ/2$-action on $\cC$ and the residual $\bZ/2$-action on $\cD^{\bZ/2}$. 
\end{lemma}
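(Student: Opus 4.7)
The plan is to interpret the lemma as Pontryagin duality applied to categorical $G$-actions for finite abelian $G$ with $|G|$ invertible in $k$: taking $G$-invariants produces a category with a canonical $\widehat{G}$-action, and iterating recovers the original. For $G=\bZ/2$ this is self-dual since $\widehat{\bZ/2}\simeq\bZ/2$.

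To construct the residual action, I would exploit the fact that, since $\characteristic(k)\neq 2$, the symmetric monoidal $\infty$-category $\Dperf(B\bZ/2)$ decomposes as $\Dperf(\widehat{\bZ/2})\simeq \Dperf(k)\times\Dperf(k)$, with the two factors corresponding to the trivial and sign characters. A $\bZ/2$-action on an object of $\Cat_k$ is equivalent to a $\Dperf(B\bZ/2)$-module structure on it, and the invariant category $\cC^{\bZ/2}$ retains such a structure; under the above duality this presents $\cC^{\bZ/2}$ as a $\Dperf(\widehat{\bZ/2})$-module, i.e.\ a $\widehat{\bZ/2}\simeq\bZ/2$-graded category, i.e.\ a category with a $\bZ/2$-action. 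Concretely, if $\tau$ is the generating autoequivalence of the original action and one thinks of an object of $\cC^{\bZ/2}$ as a pair $(X,\alpha\colon X\xrightarrow{\sim}\tau X)$ with coherent higher data, then the generator $\sigma$ of the residual action sends $(X,\alpha)$ to $(X,-\alpha)$.

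To verify that $(\cC^{\bZ/2})^{\bZ/2}\simeq\cC$ I would invoke Barr--Beck monadicity. Because $|G|=2$ is invertible in $k$, the forgetful functor $\cC^{\bZ/2}\to\cC$ is both monadic and comonadic with (co)monad $\id_{\cC}\oplus\tau$ (its two adjoints agreeing up to the norm equivalence that underlies Lemma~\ref{lemma-invariants-bc}(b)); iterating with the residual action, the composite forgetful functor $(\cC^{\bZ/2})^{\bZ/2}\to\cC$ is monadic with monad built as the ``tensor product'' of $\id\oplus\tau$ with $\id\oplus\sigma$. The character-theoretic Fourier inversion on $k[\bZ/2]$, equivalently the self-duality $\widehat{\widehat{\bZ/2}}\simeq\bZ/2$, shows that this iterated monad is Morita-trivial on $\cC$, so its module category is just $\cC$ equipped with its original $\bZ/2$-action.

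The main obstacle is the careful execution of Pontryagin duality at the $\infty$-categorical level, rather than just for ordinary $1$-categorical actions where it is classical. The essential input is the upgrade of $\Dperf(B\bZ/2)\simeq\Dperf(\widehat{\bZ/2})$ to an equivalence of symmetric monoidal stable $\infty$-categories, and all uses of $\characteristic(k)\neq 2$ funnel through it; naturality of the entire construction is then automatic from the functoriality of every step in $\cC$ as an object of $\Fun(B\bZ/2,\Cat_k)$.
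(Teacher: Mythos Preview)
Your proposal is correct and takes essentially the same approach as the paper, which simply cites Elagin and records that the residual action is given by tensoring with characters of $\bZ/2$---exactly your $(X,\alpha)\mapsto(X,-\alpha)$. Your Pontryagin-duality framing and the Barr--Beck/Fourier-inversion argument for $(\cC^{\bZ/2})^{\bZ/2}\simeq\cC$ are the $\infty$-categorical rendering of Elagin's proof, so there is no substantive divergence.
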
 

\begin{proof}
This is a special case of \cite{elagin} (where the triangulated version of the result is proved, but a similar argument also works for $k$-linear categories). 
In particular, we note that the residual action on $\ccD = \cC^{\bZ/2}$ is given by tensoring with characters of $\bZ/2$. 
\end{proof} 

Considering the invariant category of the $\bZ/2$-action on an Enriques category leads to a correspondence between Enriques and CY2 categories.  

\begin{lemma}
\label{lemma-K3-cover}  
Let $\cC$ be an Enriques category, 
and let $\ccD = \cC^{\bZ/2}$ be the invariant category for the $\bZ/2$-action. 
Then $\ccD$ is a CY2 category, called the \emph{CY2 cover} of $\cC$.
\end{lemma}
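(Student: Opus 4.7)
The plan is to verify the two defining properties of a CY2 category: smooth properness, and that the Serre functor is isomorphic to $[2]$.

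First, since $\characteristic(k) \neq 2$, the order of $\bZ/2$ is invertible on $\Spec k$, so Proposition~\ref{proposition-invariants-smooth-proper} immediately gives that $\ccD = \cC^{\bZ/2}$ is smooth and proper. In particular $\ccD$ admits a Serre functor $\rS_\ccD$.

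Next, I will identify $\rS_\ccD$ with $[2]$ by comparing it to $\rS_\cC = \tau \circ [2]$ via the forgetful functor $\Forg \colon \ccD \to \cC$. For any object $F \in \ccD$, the equivariant structure supplies a canonical isomorphism $\phi_F \colon \tau(\Forg F) \xrightarrow{\sim} \Forg F$, and therefore
\begin{equation*}
\rS_\cC(\Forg F) \;=\; \tau(\Forg F)[2] \;\simeq\; \Forg F[2].
\end{equation*}
Given $E, F \in \ccD$, Serre duality in $\cC$ yields a natural isomorphism
\begin{equation*}
\cHom_k(\Forg E, \Forg F[2])^{\svee} \;\simeq\; \cHom_k(\Forg F, \Forg E),
\end{equation*}
where I have used the trivialization $\phi_F$ to rewrite $\rS_\cC(\Forg F)$ as $\Forg F[2]$. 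Both sides carry natural $\bZ/2$-actions induced by the equivariant structures on $E$ and $F$ (the diagonal action); the content of the step is to check that this identification is equivariant, which I would verify by tracing through the naturality of Serre duality together with the fact that $\phi_F$ is a morphism of equivariant objects.

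Now take $\bZ/2$-invariants on both sides. On the right, $\cHom_k(\Forg F, \Forg E)^{\bZ/2} = \cHom_k(F, E)$ by definition of the mapping object in $\ccD$. On the left, since $|\bZ/2| = 2$ is invertible in $k$, the norm map identifies invariants with coinvariants, and linear duality exchanges invariants and coinvariants, giving
\begin{equation*}
\bigl( \cHom_k(\Forg E, \Forg F[2])^{\svee} \bigr)^{\bZ/2} \;\simeq\; \cHom_k(E, F[2])^{\svee}.
\end{equation*}
Combining these yields $\cHom_k(E, F[2])^{\svee} \simeq \cHom_k(F, E)$ naturally in $E,F \in \ccD$, which exhibits $[2]$ as a Serre functor on $\ccD$ and hence $\rS_\ccD \simeq [2]$.

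The main technical point I anticipate is the equivariance check in the step above: one must confirm that the isomorphism obtained from Serre duality in $\cC$ together with the trivialization $\phi_F$ is compatible with the $\bZ/2$-actions coming from the equivariant structures on $E$ and $F$. This is essentially a naturality argument, but it is the one place where the specific compatibilities built into the $\bZ/2$-action on $\cC$ (rather than a mere choice of autoequivalence $\tau$ with $\tau \circ \tau \simeq \id$) are actually used; cf.\ Remark~\ref{remark-connected-enriques}.
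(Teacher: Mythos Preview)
Your proof is correct and follows the same line as the paper's. The paper's own proof is a two-line citation: smoothness and properness by Proposition~\ref{proposition-invariants-smooth-proper} (exactly as you do), and the Serre functor computation by \cite[Lemma 6.5]{HH}. Your argument for $\rS_{\ccD} \simeq [2]$ is essentially an unpacking of that cited lemma: use the equivariant trivialization $\tau(\Forg F) \simeq \Forg F$ to rewrite Serre duality in $\cC$, then take $\bZ/2$-invariants and invoke the norm isomorphism (valid since $2$ is invertible in $k$) to commute invariants past the dual. The equivariance check you flag is indeed the one nontrivial point, and it is precisely what the cited reference handles; your naturality sketch is the right idea.
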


\begin{proof}
The category $\ccD$ is smooth and proper by Proposition~\ref{proposition-invariants-smooth-proper} and has Serre functor $[2]$ by \cite[Lemma 6.5]{HH}. 
\end{proof}

\begin{remark}
\label{remark-CYn} 
There is a natural generalization of Lemma~\ref{lemma-K3-cover} in which $\cC$ is assumed to be a smooth proper $k$-linear category whose Serre functor has the form $\rS_{\cC} = \sigma \circ [n]$ where $\sigma$ is the generator of a $\bZ/q$-action. 
In this case, assuming the characteristic of $k$ is coprime to $q$, we get a correspondence between such categories $\cC$ and $n$-dimensional Calabi--Yau categories $\ccD$ equipped with a residual $\bZ/q$-action. 
\end{remark} 

The source of the terminology ``CY2 cover'' in Lemma~\ref{lemma-K3-cover} 
is the following example. 

\begin{example}
\label{example-enriques-K3}
If $S$ is an Enriques surface, then its canonical bundle satisfies $\omega_S^{\otimes 2} \cong \cO_S$. 
The corresponding \'{e}tale double cover $T \to S$ is a K3 surface. 
For the $\bZ/2$-action generated by the involution of $T$ over $S$, we have 
$\Db(T)^{\bZ/2} \simeq \Db(S)$. 
Under this equivalence, the residual $\bZ/2$-action on $\Db(S)$ is generated by tensoring by $\omega_S$, and we have $\Db(S)^{\bZ/2} \simeq \Db(T)$.  
\end{example}

The following observation plays a key role in this paper, as it lets us translate the condition that two Enriques 
categories are equivalent to a statement about their CY2 covers.  

\begin{lemma}
\label{lemma-K3-cover-equivalence} 
Let $\cC_1$ and $\cC_2$ be connected Enriques categories, with CY2 covers $\ccD_1$ and $\ccD_2$. 
Then the following are equivalent: 
\begin{enumerate}
\item \label{C1-C2}
There is an equivalence $\cC_1 \simeq \cC_2$.
\item \label{C1-C2-Z2} 
There is an equivalence $\cC_1 \simeq \cC_2$ which is equivariant for the $\bZ/2$-actions generated by the $(-2)$-shifted Serre functors. 
\item \label{D1-D2-Z2} There is an equivalence $\ccD_1 \simeq \ccD_2$ which is equivariant for the residual $\bZ/2$-actions. 
\end{enumerate} 
\end{lemma}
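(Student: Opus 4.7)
The plan is to treat the implication (2) $\Rightarrow$ (1) as immediate (forgetting the equivariance structure), and then to deduce (1) $\Rightarrow$ (2) from the uniqueness of $\infty$-categorical lifts for connected categories, and (2) $\Leftrightarrow$ (3) from the fact that Lemma \ref{lemma-reconstruction} makes $(-)^{\bZ/2}$ into an involution on $\Fun(B(\bZ/2), \Cat_k)$.

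For (1) $\Rightarrow$ (2) I would start with an arbitrary equivalence $F \colon \cC_1 \xrightarrow{\sim} \cC_2$. Serre functors are canonically preserved by equivalences, so $F$ intertwines the $(-2)$-shifted Serre functors $\tau_i = \rS_{\cC_i} \circ [-2]$; in particular $F \tau_1 F^{-1} \simeq \tau_2$ as autoequivalences of $\cC_2$. Transporting the $\bZ/2$-action $\phi_1$ on $\cC_1$ along $F$ gives a $\bZ/2$-action $F_* \phi_1$ on $\cC_2$ whose underlying $1$-categorical action has generator $F \tau_1 F^{-1}$; this equals, as an element of $\pi_0(\Aut(\cC_2/k))$, the generator $\tau_2$ of $\phi_2$. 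Thus $F_* \phi_1$ and $\phi_2$ are both $\infty$-categorical lifts of the same $1$-categorical action on $\cC_2$. Connectedness of $\cC_2$ gives $\HH^0(\cC_2/k) = k$ and, since $k$ is algebraically closed, $\rH^2(B(\bZ/2), k^\times) = 0$; Corollary \ref{corollary-obstruction-HH} then says this lift is unique up to equivalence, so $F_* \phi_1 \simeq \phi_2$ in $\Fun(B(\bZ/2), \Cat_k)$. Composing this equivalence with the tautological identification $(\cC_1, \phi_1) \simeq (\cC_2, F_* \phi_1)$ yields the required $\bZ/2$-equivariant equivalence.

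For (2) $\Leftrightarrow$ (3) I would use that passage to $\bZ/2$-invariants is a functor on $\Fun(B(\bZ/2), \Cat_k)$, and that by Lemma \ref{lemma-reconstruction} it is essentially an involution: applied a second time one recovers the original category together with its original $\bZ/2$-action through the natural equivalence $\ccD^{\bZ/2} \simeq \cC$ of that lemma. Applying $(-)^{\bZ/2}$ to a $\bZ/2$-equivariant equivalence $(\cC_1, \phi_1) \simeq (\cC_2, \phi_2)$ therefore produces a residually $\bZ/2$-equivariant equivalence $\ccD_1 \simeq \ccD_2$, giving (2) $\Rightarrow$ (3). Conversely, applying $(-)^{\bZ/2}$ to a residually $\bZ/2$-equivariant equivalence $\ccD_1 \simeq \ccD_2$ and using the reconstruction identification $\ccD_i^{\bZ/2} \simeq \cC_i$ returns a $\bZ/2$-equivariant equivalence for the Serre-generated actions.

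The main obstacle is the uniqueness step in (1) $\Rightarrow$ (2): one must verify that the transport construction yields an $\infty$-lift of the intended $1$-categorical action, and then that uniqueness of such lifts in the connected setting is strong enough to upgrade the bare equivalence $F$ to an equivariant one. The cleanest formulation is via the obstruction-theoretic framework of Corollary \ref{corollary-obstruction-HH}, which reduces the issue to a group-cohomology vanishing that is automatic over an algebraically closed field; everything else is formal.
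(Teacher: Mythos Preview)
Your proof is correct and follows essentially the same route as the paper. The paper's argument for (1) $\Leftrightarrow$ (2) is just a terse reference to Remark~\ref{remark-connected-enriques} (which packages exactly the uniqueness-of-$\infty$-lifts statement you invoke via Corollary~\ref{corollary-obstruction-HH} and the vanishing of $\rH^2(B(\bZ/2), k^\times)$), and the paper handles (2) $\Leftrightarrow$ (3) by the same appeal to Lemma~\ref{lemma-reconstruction}; you have simply unpacked these references in more detail.
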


\begin{proof}
Any equivalence $\cC_1 \simeq \cC_2$ automatically commutes with Serre functors (see e.g.~\cite[Lemma 5.4]{HH}) and shifts, and hence by connectedness and Remark~\ref{remark-connected-enriques} it is equivariant for the $\bZ/2$-actions generated by $\rS_{\cC_1}[-2]$ and $\rS_{\cC_2}[-2]$. 
This shows $\eqref{C1-C2} \iff \eqref{C1-C2-Z2}$.  
A $\bZ/2$-equivariant equivalence induces an equivalence of invariant categories, which is equivariant with respect to the residual actions. This shows $\eqref{C1-C2-Z2} \implies \eqref{D1-D2-Z2}$. 
Finally, the implication $\eqref{D1-D2-Z2} \implies \eqref{C1-C2}$ follows from Lemma~\ref{lemma-reconstruction}. 
%; hence the implications $\eqref{C1-C2-Z2} \iff \eqref{D1-D2-Z2}$ 
%follows from Lemma~\ref{lemma-reconstruction}. 
\end{proof} 

\begin{remark}
Lemma~\ref{lemma-K3-cover-equivalence} also admits an obvious generalization to the situation of Remark~\ref{remark-CYn}. 
\end{remark} 

\subsection{Examples} 
One of the main sources of Enriques and CY2 categories in this paper are Kuznetsov components of GM varieties. Generalizing the definition of $3$-dimensional GM varieties from the introduction, we have: 

\begin{definition}
\label{definition-GM} 
An $n$-dimensional \emph{GM variety}, $2 \leq n \leq 6$, is either a smooth intersection 
\begin{equation*}
W = \Gr(2,5) \cap \bP^{n+4} \cap Q
\end{equation*}
of the Pl\"{u}cker embedded Grassmannian $\Gr(2,5) \subset \bP^9$ with a linear subspace $\bP^{n+4} \subset \bP^9$ and a quadric hypersurface $Q \subset \bP^9$, or a smooth double cover 
\begin{equation*}
W \to \Gr(2,5) \cap \bP^{n+3}
\end{equation*}  
branched along $\Gr(2,5) \cap \bP^{n+3} \cap Q$ where $\bP^{n+3} \subset \bP^9$ is a linear subspace and $Q \subset \bP^9$ is a quadric hypersurface. 
We say $W$ is \emph{ordinary} in the first case, and \emph{special} in the second. 
Note that if $n = 6$ then $W$ is necessarily special.   
\end{definition} 

There is a natural correspondence between GM varieties of ordinary and special types. 

\begin{definition}
\label{definition-opposite}
The \emph{opposite} of an ordinary GM variety $W = \Gr(2,5) \cap \bP^{n+4} \cap Q$ 
of dimension $n$ is the $(n+1)$-dimensional special GM variety $W^{\op} \to \Gr(2,5) \cap \bP^{n+4}$ given by the double cover branched along $W$, 
and the \emph{opposite} of a special GM variety 
$W \to \Gr(2,5) \cap \bP^{n+3}$ of dimension $n \geq 3$ is the $(n-1)$-dimensional ordinary GM variety $W^{\op} \subset \Gr(2,5) \cap \bP^{n+3}$ given by the branch locus. 
\end{definition} 

When discussing derived categories of GM varieties, we will 
always assume for simplicity that $\characteristic(k) = 0$, 
as this is done in the references cited below; in fact, for $\characteristic(k)$ sufficiently large, all of the results still hold, but we leave the details to the interested reader. 
The Kuznetsov component of a GM variety is defined by the semiorthogonal decomposition 
\begin{equation}
\label{DbW}
\Db(W) = \llangle \Ku(W), \cU_W, \cO_W, \dots, \cU_W(\dim(W)-3), \cO_W(\dim(W)-3)\rrangle  , 
\end{equation} 
where $\cU_W$ and $\cO_W(1)$ denote the pullbacks to $W$ of the tautological rank $2$ subbundle and Pl\"{u}cker line bundle on $\Gr(2,5)$.  
These categories were extensively studied in \cite{GM-derived}. 
Note that if $W$ is a GM threefold, this agrees with the definition from~\eqref{KuX}. 
Further, if $W$ is a GM surface, then $W$ is a K3 surface of degree $10$ and $\Ku(W) = \Db(W)$. 

\begin{remark}
Instead of the Kuznetsov component as we have defined it, 
\cite{GM-derived} studies a category $\cA_W$ defined by the slightly different semiorthogonal decomposition 
\begin{equation*}
\Db(W) = \llangle \cA_W, \cO_W, \cU^{\svee}_W, \dots, \cO_W(\dim(W) - 3), \cU^{\svee}_W(\dim(W) - 3) \rrangle. 
\end{equation*} 
There is a canonical equivalence $\Ku(W) \simeq \cA_W$ given by $\Phi = \rL_{\cO_W} \circ (- \otimes \cO_W(1))$, where $\rL_{\cO_W}$ is the left mutation functor through $\cO_W$. 
Indeed, if we tensor the defining semiorthogonal decomposition~\eqref{DbW} of $\Ku(W)$ by $\cO_W(1)$ and mutate the object $\cO_W(\dim(W)-2)$ to the far left, we get 
\begin{equation*}
\Db(W) = \llangle \cO_W, \Ku(W) \otimes \cO_W(1), \cU_W(1), \cO_W(1), \dots, \cU_W(\dim(W)-2) \rrangle. 
\end{equation*} 
Using that $\cU_W(1) \cong \cU_W^{\svee}$ and mutating $\Ku(W) \otimes \cO_W(1)$ through $\cO_W$ we obtain 
\begin{equation*}
\Db(W) = \llangle \Phi( \Ku(W) ), \cO_W, \cU_W^{\svee}, \dots, \cO_W(\dim(W) - 3), \cU_W^{\svee}(\dim(W)-3) \rrangle, 
\end{equation*} 
from which the stated equivalence follows, cf.~\cite[Lemma 2.30]{GM-derived}. 
We chose to define $\Ku(W)$ by the decomposition~\eqref{DbW} for consistency with the definition~\eqref{KuX} from \cite{kuznetsov-fano3fold} in the case of a GM threefold. 
\end{remark} 

Recall from~\eqref{KuY} the definition of the Kuznetsov component of a quartic double solid. 

\begin{proposition}{\label{proposition-enriques-examples}}
\begin{enumerate}
\item \label{quartic-double-enriques} 
If $Y$ is a quartic double solid, then $\Ku(Y)$ is a 
connected Enriques category. 
\item \label{GM-enriques} 
If $\characteristic(k) = 0$ and $X$ is an odd-dimensional GM variety, then $\Ku(X)$ is a 
connected Enriques category. 
\item \label{GM-K3} If $\characteristic(k) = 0$ and $W$ is an even-dimensional GM variety, then $\Ku(W)$ is a 
connected K3 category. 
\end{enumerate}
\end{proposition}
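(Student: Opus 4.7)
The three parts share the same structure, and the plan is to verify in each case: (a) smoothness and properness, (b) the required form of the Serre functor (plus the K3 property in part (3)), and (c) connectedness of Hochschild cohomology. For (a), the categories $\Ku(Y)$, $\Ku(X)$, and $\Ku(W)$ are $k$-linear semiorthogonal components of $\Db$ of smooth projective varieties, and hence smooth and proper over $k$ by \cite[Lemma 4.9]{NCHPD}. For (b), I would use Kuznetsov's mutation formula for the Serre functor of an admissible subcategory, expressing $\rS_{\Ku}$ through the Serre functor of the ambient derived category and mutations through the exceptional objects generating the orthogonal complement. In part (1), rotating through $\cO_Y, \cO_Y(H)$ and twisting by $\omega_Y = \cO_Y(-2H)$ yields $\rS_{\Ku(Y)} \simeq \iota^* \circ [2]$ where $\iota$ is the covering involution of $Y \to \bP^3$; the involution $\iota^*$ is nontrivial because $\iota$ acts nontrivially on $\rH^3(Y, \bQ)$. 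For parts (2) and (3), the analogous computations are carried out in \cite{GM-derived}: for $W$ of odd dimension one obtains $\rS_{\Ku(W)} \simeq \tau \circ [2]$ with $\tau$ a nontrivial involutive autoequivalence, while for $W$ of even dimension one obtains $\rS_{\Ku(W)} \simeq [2]$. The K3 property in part (3) — that $\HH_*(\Ku(W))$ matches $\HH_*(\Db(S))$ for a K3 surface $S$ — follows from additivity of Hochschild homology applied to \eqref{DbW} and the HKR isomorphism on $W$, using the Hodge numbers of GM varieties computed in \cite{GM-derived}.

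For (c), I would apply Corollary~\ref{corollary-connected} whenever it is available. In part (1) the relative locally free exceptional collection $\cO_Y, \cO_Y(H)$ has length $n = 2$ while $\dim Y = 3 \ge n+1$, so the corollary applies directly. In part (3), connectedness is automatic from the CY2 property established above: Serre duality for a CY2 category gives $\HH^i(\Ku(W)) \cong \HH_{2-i}(\Ku(W))$, and the Hochschild homology of a K3 category is concentrated in degrees $\{-2, 0, 2\}$ with $\HH_{\pm 2} = k$, so $\HH^0(\Ku(W)) = k$ and $\HH^{i}(\Ku(W)) = 0$ for $i < 0$. In part (2), Corollary~\ref{corollary-connected} applies directly to the pair $\cU_X, \cO_X$ when $\dim X = 3$. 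The only remaining situation is that of a 5-dimensional GM variety, where the collection $\cU_X, \cO_X, \cU_X(1), \cO_X(1), \cU_X(2), \cO_X(2)$ has length $6$, exceeding the bound $\dim X - 1 = 4$ of Corollary~\ref{corollary-connected}. Here I would invoke Proposition~\ref{proposition-heights} after a direct pseudoheight computation of the collection, using Borel--Weil--type vanishing of the relevant $\Ext$-groups inherited from $\Gr(2,5)$; alternatively, one can cite the explicit Hochschild cohomology computation of $\Ku$ of a GM 5-fold in \cite{GM-derived}.

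The main technical obstacle is this pseudoheight/Hochschild computation in the 5-dimensional GM case; every other ingredient either reduces to published Serre-functor and Hochschild-homology calculations in \cite{kuznetsov-fano3fold} and \cite{GM-derived}, or to a direct application of Corollary~\ref{corollary-connected}.
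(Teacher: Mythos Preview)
Your proposal handles smoothness/properness, the Serre functor form, the K3 property, and connectedness essentially correctly, and your route to connectedness via Corollary~\ref{corollary-connected} and the CY2 duality is a reasonable alternative to the paper's approach (which simply cites the explicit Hochschild cohomology computations in \cite[Corollary~2.11 and Proposition~2.12]{GM-derived}).

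There is, however, a genuine gap in part~\eqref{GM-enriques}. In this paper an Enriques category is \emph{equipped with a $\bZ/2$-action} whose generator is the $(-2)$-shifted Serre functor, not merely with an autoequivalence $\tau$ satisfying $\tau^2 \simeq \id$. As emphasized in Remark~\ref{remark-connected-enriques} and developed throughout \S\ref{section-group-actions-cats}, for a connected category there is an obstruction in $\rH^3(B(\bZ/2), k^{\times}) \cong \bZ/2$ to lifting a $1$-categorical involution to an $\infty$-categorical $\bZ/2$-action, and this obstruction does not vanish automatically. Your proposal for~\eqref{quartic-double-enriques} implicitly handles this by identifying $\tau$ with the pushforward $\iota_*$ along the geometric covering involution, which visibly generates a $\bZ/2$-action; the same works in~\eqref{GM-enriques} when $X$ is \emph{special}. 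But for an \emph{ordinary} odd-dimensional GM variety there is no covering involution, and the involutive autoequivalence $\tau \simeq \Phi_X[-1]$ produced by \cite{GM-derived} is a rotation functor built from mutations --- it is not a priori clear that its square-trivialization comes from a $\bZ/2$-action. The paper closes this gap by invoking \cite[\S8.2]{cyclic-covers}: one has $\Ku(X^{\op})^{\bZ/2} \simeq \Ku(X)$ for the $\bZ/2$-action on $\Ku(X^{\op})$ coming from the covering involution of the special GM variety $X^{\op}$, and under this equivalence $\Phi_X[-1]$ is identified with the generator of the \emph{residual} $\bZ/2$-action, which is a genuine group action by construction. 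Without this (or an equivalent argument), your verification of~\eqref{GM-enriques} is incomplete.
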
 

\begin{proof}
By \cite{kuznetsov-CY} and the Hochschild homology computation in \cite[Proposition 2.9]{GM-derived}, the category $\Ku(W)$ in~\eqref{GM-K3} is a K3 category, while the Serre functors in~\eqref{quartic-double-enriques} and~\eqref{GM-enriques} are of the form 
$\tau \circ [2]$ for an involution~$\tau$. 
By \cite[Proposition 2.6]{GM-derived} the involution $\tau$ is nontrivial for $\Ku(X)$, and the same argument applies to $\Ku(Y)$. 
Thus to show that $\Ku(Y)$ and $\Ku(X)$ are Enriques categories, it remains to show that there is a $\bZ/2$-action with generator~$\tau$. 
By Remark~\ref{remark-connected-enriques} there is a potential obstruction to the existence of such an action; we show it vanishes by relating $\tau$ to a geometric $\bZ/2$-action. 

More precisely, in case~\eqref{quartic-double-enriques} or in case~\eqref{GM-enriques} if $X$ is special, \cite{kuznetsov-CY} shows that $\tau$ can be described as the pushforward along the covering involution of $Y$ or $X$; in particular, it follows that $\tau$ is the generator of a $\bZ/2$-action. 
If $X$ is ordinary, then $\tau \simeq \Phi_X[-1]$ where $\Phi_X$ is the ``rotation functor'' defined in~\eqref{rotation-functor} below. 
Note that the Kuznetsov component $\Ku(X^{\op})$ of the opposite variety has a $\bZ/2$-action generated by the covering involution. 
By \cite[\S8.2]{cyclic-covers} there is an equivalence $\Ku(X^{\op})^{\bZ/2} \simeq \Ku(X)$, such that $\Phi_X[-1]$ is the generator of the residual $\bZ/2$-action on $\Ku(X)$; in particular, $\tau$ also corresponds to a $\bZ/2$-action in this case. 

Finally, the connectedness of the categories in~\eqref{GM-enriques} and~\eqref{GM-K3} holds by the computation of their Hochschild cohomology in \cite[Corollary 2.11 and Proposition 2.12]{GM-derived}, and an analogous argument applies to the categories in~\eqref{quartic-double-enriques}.  
\end{proof} 

The CY2 covers of Kuznetsov components of quartic double solids and 
odd-dimensional GM varieties can be described explicitly as follows. 

\begin{theorem}[{\cite{cyclic-covers}}]
\label{theorem-K3-cover-examples} 
\begin{enumerate}
\item \label{quartic-involution}
Let $Y \to \bP^3$ be a quartic double solid with branch locus $Y_{\br}$. 
Then the CY2 cover of $\Ku(Y)$ is equivalent to $\Db(Y_{\br})$. 
Under this equivalence the residual $\bZ/2$-action on $\Db(Y_{\br})$ is generated by the  autoequivalence $\Phi_{Y_{\br}}^2 [-1]$, where 
\begin{equation*}
\Phi_{Y_{\br}} = \rT_{\cO_{Y_{\br}}} \circ (- \otimes \cO_{Y_{\br}}(1))
\end{equation*} 
and $\rT_{\cO_{Y_{\br}}}$ is the spherical twist around $\cO_{Y_{\br}}$. 

\item \label{GM-involution}
Let $W$ be a GM variety and assume $\characteristic(k) = 0$. 
Let 
\begin{equation}
\label{rotation-functor} 
\Phi_{W}  = \rL_{\cU_{W}} \circ \rL_{\cO_{W}}  \circ (- \otimes \cO_{W}(1)) 
\colon \Ku(W) \to \Ku(W)  
\end{equation} 
where $\rL_{\cU_{W}}$ and $\rL_{\cO_{W}}$ denote the left mutation functors 
through $\cU_{W}$ and $\cO_{W}$ if $\dim(W) \geq 3$ 
(in which case these objects are exceptional), or the spherical spherical twists 
around $\cU_{W}$ and $\cO_{W}$ if $\dim(W) = 2$ (in which case 
these objects are spherical). 
Then $\Phi_{W}[-1]$ is an involutive autoequivalence of $\Ku(W)$, such that: 
\begin{itemize}
\item If $W$ is special then $\Phi_{W}[-1] \simeq i_*$ where $i$ is 
the covering involution of $W$. 
\item If $\dim W$ is odd then 
$\Phi_{W}[-1] \simeq \tau$ where $\tau = \rS_{\Ku(W)} [-2]$. 
\end{itemize} 

\item \label{K3-cover-GM} 
Let $W$ be an odd-dimensional GM variety and assume $\characteristic(k) = 0$.
Then the CY2 cover of $\Ku(W)$ is equivalent to $\Ku(W^{\op})$. 
Under this equivalence the residual $\bZ/2$-action on $\Ku(W^{\op})$ is generated by
the autoequivalence $\Phi_{W^{\op}} [-1]$
\end{enumerate} 
\end{theorem}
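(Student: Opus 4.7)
The plan is to address part (2) first, since the rotation functor $\Phi_W$ and its analogues provide the key involutive autoequivalences needed for the CY2 cover identifications in parts (1) and (3). For (2), the involutive nature of $\Phi_W[-1]$ on $\Ku(W)$ is verified by a direct computation using Orlov's formulas for mutations through exceptional objects (for $\dim W \geq 3$), or the standard exact triangles associated with spherical twists (for $\dim W = 2$). The crucial input is the relation $\omega_W \simeq \cO_W(3 - \dim W)$, which, combined with how $\cO_W(1)$ and $\cU_W$ enter the semiorthogonal decomposition~\eqref{DbW}, ensures that applying $\Phi_W$ twice to an object of $\Ku(W)$ recovers it up to a shift by $[2]$. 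For the geometric identifications: when $W$ is special, the covering involution $i$ descends to an autoequivalence $i_* \colon \Ku(W) \to \Ku(W)$, and matching $\Phi_W[-1]$ with $i_*$ reduces to a check on a generator (using that both agree on the pullback of the Pl\"ucker line bundle and on $\cU_W$, which together control $\Ku(W)$). When $\dim W$ is odd, matching $\Phi_W[-1]$ with $\tau = \rS_{\Ku(W)}[-2]$ uses the standard formula expressing the Serre functor of a semiorthogonal component in terms of the global Serre functor and successive mutations dual to those in the definition of $\Phi_W$.

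For part (3), the strategy is to construct an equivalence between the CY2 cover of $\Ku(W)$ and $\Ku(W^{\op})$, intertwining the residual $\bZ/2$-action on the left with the involution $\Phi_{W^{\op}}[-1]$ on the right. When $W$ is ordinary, $W^{\op}$ is a double cover branched along $W$ and carries its covering involution $i$; by Proposition~\ref{proposition-enriques-examples} the category $\Ku(W^{\op})$ is a K3 category, and part (2) applied to $W^{\op}$ identifies $i_*|_{\Ku(W^{\op})}$ with $\Phi_{W^{\op}}[-1]$. The invariant category $\Ku(W^{\op})^{\bZ/2}$ for this action is then matched with $\Ku(W)$ by relating the semiorthogonal decompositions of $\Db(W^{\op})$ and $\Db(W)$ via pushforward and pullback along the cyclic cover $W^{\op} \to \Gr(2,5) \cap \bP^{\dim(W)+4}$. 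The case of special $W$ is dual: $W^{\op}$ is the ordinary GM variety appearing as the branch locus of $W$, and the argument proceeds with the roles reversed. By Lemma~\ref{lemma-K3-cover-equivalence}, it suffices to produce such an equivariant equivalence, which by Lemma~\ref{lemma-reconstruction} is enough to identify the CY2 cover.

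For part (1), the argument parallels (3), with $Y_{\br}$ playing the role of $W^{\op}$ and the spherical twist $\rT_{\cO_{Y_{\br}}}$ (around the spherical object $\cO_{Y_{\br}}$ in the K3 surface $Y_{\br}$) playing the role of the mutation through $\cO_{W^{\op}}$; there is no analogue of $\cU_{W^{\op}}$ here, hence one needs to square the resulting operator to land in the correct ``Enriques-type'' setting, which is why $\Phi_{Y_{\br}}^2[-1]$ (rather than $\Phi_{Y_{\br}}[-1]$) is the correct residual generator. That $\Phi_{Y_{\br}}^2[-1]$ is an involutive autoequivalence follows from the same kind of mutation/twist calculation as in part (2). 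The quartic double cover $Y \to \bP^3$ branched along $Y_{\br}$ then supplies the geometric link, under which $\Db(Y_{\br})^{\bZ/2}$ for the residual action is matched with $\Ku(Y)$.

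The main obstacle in this program is twofold. First, one must verify that each candidate involution truly generates a $\bZ/2$-action on the enhanced category, not just on its homotopy category; here Remark~\ref{remark-connected-enriques} (and its evident CY2 analogue) reduces this to the vanishing of an obstruction in $\rH^3(B(\bZ/2), k^{\times}) \cong \bZ/2$, which can be checked by exhibiting a geometric realization (the covering involution in each case). Second, and more substantial, is the explicit construction of the equivariant equivalence between the CY2 cover (invariant category) and the candidate K3 category: this requires carefully tracking how the semiorthogonal decompositions of the ambient Fano varieties behave under the double cover / branch locus correspondence. This is the heart of the cyclic cover formalism of \cite{cyclic-covers}, where the Fourier--Mukai kernels realizing the equivalences are built directly from the geometry of the double cover.
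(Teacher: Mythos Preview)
The paper's own proof of this theorem is a single sentence: ``This follows from the main results of \cite{cyclic-covers}, as explained in \cite[\S8.1--8.2]{cyclic-covers}.'' In other words, the paper treats this as a quotable result from the literature and does not reprove it. Your proposal, by contrast, is an outline of how one would actually carry out the arguments contained in that reference.

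As a sketch of the strategy in \cite{cyclic-covers}, your outline is broadly accurate: the identifications of the CY2 covers in (1) and (3) do come from the cyclic cover formalism relating semiorthogonal decompositions of a double cover and its branch divisor, and the identifications of $\Phi_W[-1]$ with $i_*$ (in the special case) and with $\tau$ (in odd dimension) do come from Serre functor / mutation computations of the kind you describe. Your explanation of why $\Phi_{Y_{\br}}$ must be squared in part (1) is a bit loose --- the precise reason is that the branch divisor of $Y \to \bP^3$ has degree $4$ while the defining exceptional collection $\cO_Y, \cO_Y(1)$ has length $2$, so the rotation functor on $\Ku(Y)$ has order $4$ rather than $2$; this is the ``$d/n$'' phenomenon in \cite[\S8.1]{cyclic-covers}.

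That said, your proposal remains a plan rather than a proof: the ``explicit construction of the equivariant equivalence'' that you flag as the main obstacle is exactly the substantive content of \cite{cyclic-covers}, and nothing in your outline supplies it. For the purposes of this paper, a citation is the appropriate proof; if you intend to give a self-contained argument, you would need to reproduce the kernel constructions and compatibility checks from \cite[\S7--8]{cyclic-covers}, which is considerably more than what you have sketched.
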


\begin{proof}
This follows from the main results of \cite{cyclic-covers}, as explained in  \cite[\S8.1-8.2]{cyclic-covers}. 
\end{proof}

\begin{remark}
In the upcoming paper \cite{moduli-enriques}, we will use the description of the CY2 covers in these and other examples to describe moduli spaces of Bridgeland stable objects in Enriques categories.  
\end{remark}

%%%%%%%%%%%%%%%%%%%%%%%%%%%%%%%%%%%%%%%%%%%%%%%%%%%%%%

\section{Hodge theory via Kuznetsov components} 
\label{section-Hodge-theory}

In this section we work over the complex numbers. 
After reviewing some facts about Mukai Hodge structures in \S\ref{mukai-HS}, 
we prove Proposition~\ref{proposition-GM-periods} on the categorical description of GM periods in \S\ref{subsection-proof-GM-periods}, and give an application to the period map in \S\ref{subsection-application-periods}. 

\subsection{Mukai Hodge structure} 
\label{mukai-HS}
As explained in \cite{IHC-CY2}, to any $\bC$-linear category $\cC$ occuring as a semiorthogonal component in the derived category of a smooth proper complex variety, one can canonically attach a lattice equipped with a Hodge structure, which is additive under semiorthogonal decompositions. 
If $\cC$ is a CY2 category then we get a 
weight $2$ Hodge structure $\tH(\cC, \bZ)$, called the \emph{Mukai Hodge structure} of $\cC$ (see \cite[Definition~6.4]{IHC-CY2}), which generalizes a construction of Addington--Thomas \cite{addington-thomas} in the case of Kuznetsov components of cubic fourfolds. 
Below we explicitly describe this Hodge structure in the cases of interest 
for this paper. 

\begin{example}
\label{example-tH-K3}
Let $T$ be a complex K3 surface. 
Then $\tH(\Db(T), \bZ)$ coincides with the classical Mukai Hodge structure, defined as 
\begin{equation*}
\tH(T, \bZ) = \rH^0(T, \bZ)(-1) \oplus \rH^2(T, \bZ) \oplus \rH^4(T, \bZ)(1) 
\end{equation*} 
where $(-1)$ and $(1)$ denote Tate twists, 
with pairing $((r,c,s) , (r',c',s') ) = c  c'-r  s'-r'  s$. 
\end{example} 

\begin{example}
Let $W$ be a GM fourfold. In this case, $\tH(\Ku(W), \bZ)$ was originally defined and studied in \cite{pertusi-GM}, and admits the following explicit description.  
As an abelian group, 
\begin{equation*}
\tH(\Ku(W), \bZ) = \set{ v \in \Ktop[0](W) \st \chi_{\rtop}([\cU_W(i)], v) = \chi_{\rtop}([\cO_W(i)], v) = 0 \text{ for } i = 0,1}
\end{equation*} 
where $\Ktop[0](X)$ denotes the complex topological $K$-theory of the space of complex points $X(\bC)$, and $\chi_{\rtop}$ denotes the topological Euler pairing. 
The group $\tH(\Ku(W), \bZ)$ is regarded as a lattice with the symmetric pairing $(-,-) = - \chi_{\rtop}(-,-)$. 
Further, the Chern character induces an embedding  
\begin{equation*}
\tH(\Ku(W), \bZ) \otimes \bQ \to \rH^{\mathrm{even}}(W, \bQ). 
\end{equation*} 
By taking appropriate Tate twists, we regard $ \rH^{\mathrm{even}}(W, \bQ) = \bigoplus_{k = 0}^4 \rH^{2k}(X, \bQ)(k-1)$ as a weight $2$ Hodge structure. 
The Hodge filtration on $\tH(\Ku(W), \bZ) \otimes \bC$ is then the intersection of the corresponding filtration on $
\rH^{\mathrm{even}}(W, \bC)$ along the above embedding. 
\end{example} 

\begin{example}
Let $W$ be a GM sixfold. 
Then there is a similar description of the Mukai Hodge structure: as an abelian group 
\begin{equation*}
\tH(\Ku(W), \bZ) = \set{ v \in \Ktop[0](W) \st \chi_{\rtop}([\cU_W(i)], v) = \chi_{\rtop}([\cO_W(i)], v) = 0 \text{ for } i = 0,1,2,3} , 
\end{equation*} 
with Hodge structure induced by the one on $\rH^{\mathrm{even}}(W, \bQ)$. 
\end{example} 

The above description shows that for a GM fourfold or sixfold, 
$\tH(\Ku(W), \bZ)$ is rationally quite closely related to the usual middle-degree cohomology $\rH^{\dim(W)}(W, \bZ)$. 
We will need the following integral relation. 

\begin{proposition}[\cite{pertusi-GM}]
\label{proposition-primitive-cohomology} 
Let $W$ be a GM variety of dimension $n=4$ or $6$. 
\begin{enumerate}
\item \label{A12} There is a canonical rank $2$ sublattice
\begin{equation*}
A_1^{\oplus 2} = \begin{pmatrix}
2 & 0 \\ 
0 & 2 
\end{pmatrix}   \subset \tH(\Ku(W), \bZ)
\end{equation*} 
which is the image of the map $\rK_0(\Gr(2,5)) = \Ktop[0](\Gr(2,5)) \to \tH(\Ku(W), \bZ)$ given by 
pulling back classes on $\Gr(2,5)$ and projecting into $\Ku(W)$. 

\item \label{Hprim} 
Let $\tH(\Ku(W), \bZ)_0$ denote the orthogonal sublattice to $A_1^{\oplus 2} \subset 
\tH(\Ku(W), \bZ)$, and let $\rH^n(W, \bZ)_0$ denote the orthogonal sublattice to  
$\rH^n(\Gr(2,5), \bZ) \hookrightarrow \rH^n(W, \bZ)$. Then the Chern class 
$c_{n/2} \colon \Ktop[0](W) \to \rH^n(W, \bZ)$ induces an isometry of 
weight $2$ Hodge structures 
\begin{equation*}
\tH(\Ku(W), \bZ)_0 \cong \rH^n(W, \bZ)_0(\tfrac{n}{2}-1), 
\end{equation*} 
where $(\frac{n}{2}-1)$ on the right denotes a Tate twist. 
\end{enumerate}
\end{proposition}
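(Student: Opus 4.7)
The plan is to follow the strategy of \cite{pertusi-GM}, breaking the proof into an explicit K-theoretic computation for part~\eqref{A12} and a comparison of Hodge structures via the Chern class for part~\eqref{Hprim}.

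For \eqref{A12}, the first step is to describe the image of the pullback map $\rK_0(\Gr(2,5)) \to \Ktop[0](W)$, and to compute its further projection to $\tH(\Ku(W), \bZ)$ modulo the sublattice generated by the exceptional collection appearing in the semiorthogonal decomposition~\eqref{DbW}. Since $\rK_0(\Gr(2,5))$ has rank $10$, and $\{[\cU_W(i)], [\cO_W(i)]\}_{i=0}^{n-3}$ gives $2(n-2)$ classes that die after projection to $\Ku(W)$, one expects the image to have rank at most $10 - 2(n-2)$. For $n=6$ this is already $2$ on the nose; for $n=4$, further relations among pulled-back Schubert classes in $\Ktop[0](W)$ (arising from the fact that $W$ is a quadric section of a linear section of $\Gr(2,5)$, so cohomology of the ambient restricts non-injectively) must be identified to cut the rank down to $2$. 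I would extract two explicit generators $\lambda_1, \lambda_2$, for example as projections of Schur functor classes $\mathbb{S}^\mu(\cU_W)$ for partitions $\mu$ lying outside the window defining the exceptional collection, and then compute $\chi_{\rtop}(\lambda_i, \lambda_j)$ via Hirzebruch--Riemann--Roch on $\Gr(2,5)$ to check the pairing matrix is $-\mathrm{diag}(2,2)$ (so the Mukai pairing $(-,-) = -\chi_{\rtop}$ yields $A_1^{\oplus 2}$).

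For \eqref{Hprim}, the Chern character $\ch \colon \Ktop[0](W)_{\bQ} \to \rH^{\mathrm{even}}(W,\bQ)$ induces a map of Hodge structures after suitable Tate twists. For $v \in \tH(\Ku(W),\bZ)_0$, orthogonality with $[\cO_W]$ forces $\rk(v)=0$, and orthogonality with the explicit generators of $A_1^{\oplus 2}$ together with the exceptional collection should force $c_k(v) = 0$ for $k < n/2$; the verification amounts to showing these Chern classes lie in the image of $\rH^*(\Gr(2,5),\bQ) \to \rH^*(W,\bQ)$ (spanned by restrictions of pullbacks from the Grassmannian), and then vanish because the Euler-pairing constraints eliminate precisely this image. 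Consequently $c_{n/2}$ produces a well-defined map landing in $\rH^n(W,\bZ)_0$, which is a morphism of weight $2$ Hodge structures after applying the Tate twist $(\tfrac{n}{2}-1)$.

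To upgrade this to an \emph{integral} isometry, I would verify the map is injective (by noting that $v \in \tH(\Ku(W),\bZ)_0$ is determined by $c_{n/2}(v)$, since all lower Chern classes are already zero and the rank is zero, and since the top Chern class is zero for dimensional reasons), and then a rank count against $\rk \rH^n(W,\bZ)_0 = b_n(W) - \rk \rH^n(\Gr(2,5),\bZ)$ together with a discriminant comparison yields bijectivity and the isometry property; the Mukai pairing $-\chi_{\rtop}$ is expressible via Hirzebruch--Riemann--Roch in terms of the cup-product on cohomology, and once the lower Chern classes vanish this reduces (up to sign and the Tate twist) to the intersection form on $\rH^n(W,\bZ)_0$. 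The main obstacle I anticipate is precisely this integrality: while the rational statement follows cleanly from GRR, showing that $c_{n/2}$ identifies the integral lattices requires carefully handling torsion and demonstrating that no integral class of $\rH^n(W,\bZ)_0$ is missed, which is most naturally done by exhibiting an explicit inverse on generators (or equivalently, by showing that the cokernel has trivial discriminant using the computation of the discriminant of $\tH(\Ku(W),\bZ)$ from part~\eqref{A12}).
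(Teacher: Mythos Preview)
Your strategy is essentially the one carried out in the references the paper cites (the paper itself gives no argument beyond pointing to \cite[Lemma~2.27]{GM-derived} for part~\eqref{A12} and \cite[Proposition~3.1]{pertusi-GM} for part~\eqref{Hprim}), so you are reconstructing rather than diverging.

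Two small inaccuracies in your sketch are worth flagging. First, orthogonality with $[\cO_W]$ alone does not force $\rk(v)=0$: the Euler pairing $\chi_{\rtop}(\cO_W,v)$ is a single linear condition involving all graded pieces of $\ch(v)$ against $\td(W)$. What actually happens is that the six conditions together (orthogonality to the four exceptional classes plus the two $A_1^{\oplus 2}$ generators) impose six independent linear constraints that kill precisely the six-dimensional subspace of $\Ktop[0](W)_\bQ$ mapping to $\rH^0 \oplus \rH^2 \oplus \rH^4(\Gr(2,5),\bQ) \oplus \rH^{2n-2} \oplus \rH^{2n}$; you should state it that way. Second, the phrase ``the top Chern class is zero for dimensional reasons'' is not correct: the component of $\ch(v)$ in $\rH^{2n}(W,\bZ)$ is not automatically zero --- it vanishes because of the orthogonality constraints, not for any dimensional reason. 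Once these six constraints are shown to be independent and to cut out exactly the non-primitive part, the rest of your outline (injectivity of $c_{n/2}$, rank count, and the GRR computation reducing $-\chi_{\rtop}$ to the cup product on $\rH^n(W,\bZ)_0$) goes through as in \cite{pertusi-GM}.
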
 

\begin{remark}
Proposition~\ref{proposition-tH-A12} below 
implies that our notation $\tH(\Ku(W), \bZ)_0$ above is consistent with that of Proposition~\ref{proposition-GM-periods}. 
\end{remark} 

\begin{proof}
Part~\eqref{A12} follows from \cite[Lemma 2.27]{GM-derived}. 
Part~\eqref{Hprim} for $n = 4$ holds by \cite[Proposition~3.1]{pertusi-GM}, 
and the $n = 6$ case holds by an analogous argument. 
\end{proof}

For later use in \S\ref{section-nonexistence}, we record a lift of the isomorphism in Proposition~\ref{proposition-primitive-cohomology}\eqref{Hprim} to the level of quotient groups. 
We only consider the $4$-dimensional case, as that is the one we shall need, but a 
similar statement holds in dimension $6$. 
\begin{proposition}[\cite{pertusi-GM}]
\label{proposition-tH}
Let $W$ be a GM fourfold. 
There is an isomorphism of abelian groups 
\begin{equation*}
\frac{\tH(\Ku(W), \bZ)}{A_1^{\oplus 2}} \cong \frac{\rH^4(W, \bZ)}{\rH^4(\Gr(2,5), \bZ)} 
\end{equation*} 
induced by the second Chern class $c_2 \colon \tH(\Ku(W), \bZ) \to \rH^4(W, \bZ)$. 
Moreover, under the resulting correspondence between sublattices 
\begin{equation*}
\rH^4(\Gr(2,5), \bZ) \subset K \subset \rH^4(W, \bZ) \quad \text{and} \quad 
A_1^{\oplus 2} \subset K' \subset \tH(\Ku(W), \bZ) 
\end{equation*} 
we have: 
\begin{enumerate}
\item \label{K-primitive} $K$ is primitive if and only if $K'$ is primitive. 
\item \label{K-disc} $K$ is non-degenerate if and only if $K'$ is nondegenerate, 
in which case  
$K$ has signature $(r,s)$ if and only if $K'$ has signature $(s+2, r-2)$ 
and $\disc(K) = (-1)^{\rk K}\disc(K')$. 
\item \label{K-hodge} $K \subset \rH^{4}(W, \bZ)$ consists of Hodge classes if and only if $K' \subset \tH(\Ku(W), \bZ)$ consists of Hodge classes. 
\end{enumerate}
\end{proposition}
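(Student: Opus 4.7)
The plan is to promote the integral isometry $\tH(\Ku(W), \bZ)_0 \cong \rH^4(W, \bZ)_0(1)$ of Proposition~\ref{proposition-primitive-cohomology}\eqref{Hprim} to an isomorphism of the full quotients in the proposition, from which the sublattice correspondences follow formally.

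First I would verify that $c_2$ induces a well-defined homomorphism
\[
\overline{c_2}\colon \tH(\Ku(W), \bZ)/A_1^{\oplus 2} \longrightarrow \rH^4(W, \bZ)/\rH^4(\Gr(2,5), \bZ).
\]
The Whitney formula gives $c_2(v + w) = c_2(v) + c_2(w) + c_1(v) \cup c_1(w)$, so $c_2$ is not additive on the nose; however, the orthogonality conditions defining $\tH(\Ku(W), \bZ) \subset \Ktop[0](W)$ constrain the rank and first Chern class of its elements to lie in the rank-$2$ sublattice $A_1^{\oplus 2}$ (essentially the content of Proposition~\ref{proposition-primitive-cohomology}\eqref{A12}), so the correction term $c_1(v) \cup c_1(w)$ lies in the span of products of $\Gr(2,5)$-pullback classes, and hence in $\rH^4(\Gr(2,5), \bZ)$. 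Thus the composition with the projection to the quotient is additive and kills $A_1^{\oplus 2}$, giving the claimed $\overline{c_2}$.

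Next, I would show $\overline{c_2}$ is an isomorphism. Rationally, the orthogonal decompositions on both sides identify $\overline{c_2} \otimes \bQ$ with the rational version of Proposition~\ref{proposition-primitive-cohomology}\eqref{Hprim} and hence an isomorphism. For integrality, I would compare the orders of the finite glue subgroups
\[
\tH(\Ku(W), \bZ)/(A_1^{\oplus 2} \oplus \tH(\Ku(W), \bZ)_0) \quad \text{and} \quad \rH^4(W, \bZ)/(\rH^4(\Gr(2,5), \bZ) \oplus \rH^4(W, \bZ)_0).
\]
Since $\tH(\Ku(W), \bZ)$ is the unimodular Mukai lattice of a K3-type category and $\rH^4(W, \bZ)$ is unimodular by Poincar\'{e} duality (there is no torsion on a GM fourfold), these glue groups are controlled respectively by $\disc A_1^{\oplus 2} = 4$ and by $\disc \rH^4(\Gr(2,5), \bZ)$; a direct Schubert calculation using $[W] = 2(\sigma_2 + \sigma_{1,1})$ together with $\int_W \alpha \cup \beta = \int_{\Gr(2,5)} \alpha \cup \beta \cup [W]$ yields a Gram matrix of discriminant $4$ for the Schubert sublattice of $\rH^4(W, \bZ)$, so the glue indices match and $\overline{c_2}$ is an integral isomorphism.

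Finally, I would derive (a)--(c). Claim (a) follows because a subgroup is primitive iff the ambient quotient is torsion-free, a property transported by any isomorphism of groups. For (b), decompose $K'$ rationally as $A_1^{\oplus 2} \oplus (K' \cap \tH(\Ku(W), \bZ)_0)$ and analogously $K$; under $\overline{c_2}$ the orthogonal complement $K' \cap \tH(\Ku(W), \bZ)_0$ corresponds to $K \cap \rH^4(W, \bZ)_0$ via the isomorphism $\tH(\Ku(W), \bZ)_0 \cong \rH^4(W, \bZ)_0(1)$, which is anti-isometric on underlying integer bilinear pairings (the Tate twist flips signs). Thus a signature $(p, q)$ of $K \cap \rH^4(W, \bZ)_0$ corresponds to $(q, p)$ on $K' \cap \tH(\Ku(W), \bZ)_0$; summing with the $(2, 0)$ signatures of the two ambient rank-$2$ sublattices yields $\mathrm{sig}(K) = (p+2, q)$ and $\mathrm{sig}(K') = (q+2, p)$, i.e.\ $(r, s) \leftrightarrow (s+2, r-2)$, and the discriminant sign $(-1)^{\rk K}$ arises from the same anti-isometry on a rank-$(\rk K - 2)$ complement. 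Claim (c) holds because $A_1^{\oplus 2}$ and $\rH^4(\Gr(2,5), \bZ)$ both consist of algebraic (hence Hodge) classes, so the Hodge property for $K$ or $K'$ reduces to that of the orthogonal complements, which correspond under the Hodge-structure isomorphism of Proposition~\ref{proposition-primitive-cohomology}\eqref{Hprim}. The main obstacle will be the integrality in the second step, i.e.\ establishing by the explicit discriminant comparison that $\overline{c_2}$ is an integral, not merely rational, isomorphism; the other steps are formal consequences of Proposition~\ref{proposition-primitive-cohomology}.
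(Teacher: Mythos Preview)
The paper does not give its own proof: it cites \cite[Propositions~3.2 and Lemma~3.4]{pertusi-GM} for the isomorphism and for parts~\eqref{K-primitive}--\eqref{K-disc}, and invokes Proposition~\ref{proposition-primitive-cohomology}\eqref{Hprim} only for part~\eqref{K-hodge}. So you are effectively attempting to reconstruct Pertusi's argument, and your outline is close in spirit to what is done there. The reduction of \eqref{K-primitive} and \eqref{K-hodge} to the quotient isomorphism and Proposition~\ref{proposition-primitive-cohomology}\eqref{Hprim} is correct, and your signature/discriminant computation for \eqref{K-disc} via the anti-isometry on primitive parts is the right idea.

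There are, however, two genuine gaps. First, your well-definedness step is muddled: you argue that the Whitney correction $c_1(v)\cup c_1(w)$ lands in $\rH^4(\Gr(2,5),\bZ)$, but you never verify that $c_2$ sends the generators of $A_1^{\oplus 2}$ themselves into $\rH^4(\Gr(2,5),\bZ)$. Those generators are \emph{projections} of pullback classes into $\Ku(W)$, not pullbacks on the nose, so this requires an explicit computation (carried out in \cite{pertusi-GM}); your appeal to Proposition~\ref{proposition-primitive-cohomology}\eqref{A12} does not supply it. Second, your integrality argument hinges on $\tH(\Ku(W),\bZ)$ being unimodular, which you assert as ``the unimodular Mukai lattice of a K3-type category''. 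This is true but is itself one of the main results of \cite{pertusi-GM}, established by an explicit lattice computation; it is not a formal consequence of $\Ku(W)$ being a K3 category. Moreover, even granting unimodularity, your glue-group count presumes primitivity of $A_1^{\oplus 2}\subset\tH(\Ku(W),\bZ)$, which in the paper is \emph{deduced} from this very proposition (see the proof of Proposition~\ref{proposition-tH-A12}); you would need to establish it independently to avoid circularity. These gaps are fixable by the explicit calculations in \cite{pertusi-GM}, but they are precisely the substantive content of the cited reference.
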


\begin{proof} 
The isomorphism is \cite[Propositions 3.2]{pertusi-GM}, 
\eqref{K-primitive} and~\eqref{K-disc} hold by \cite[Lemma 3.4]{pertusi-GM} (taking into account that our lattice $\tH(\Ku(W), \bZ)$ is by definition the negative of the one considered there), and 
\eqref{K-hodge} 
follows from Proposition~\ref{proposition-primitive-cohomology}\eqref{Hprim}. 
\end{proof} 

\subsection{Proof of Proposition~\ref{proposition-GM-periods}} 
\label{subsection-proof-GM-periods}
Proposition~\ref{proposition-primitive-cohomology} reduces 
Proposition~\ref{proposition-GM-periods} to the following result.  
Note that the construction of Hodge structures for categories from \cite{IHC-CY2} 
is functorial; in particular, a $\bZ/2$-action on a CY2 category does indeed 
induce a $\bZ/2$-action on its Mukai Hodge structure.

\begin{proposition}
\label{proposition-tH-A12}
Let $W$ be a GM variety of dimension $4$ or $6$. 
Then the invariant sublattice $\tH(\Ku(W), \bZ)^{\bZ/2} \subset  \tH(\Ku(W), \bZ)$ for the residual $\bZ/2$-action equals the canonical sublattice $A_1^{\oplus 2} \subset \tH(\Ku(W), \bZ)$. 
\end{proposition}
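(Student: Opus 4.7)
By Theorem~\ref{theorem-K3-cover-examples}\eqref{K3-cover-GM} applied to the odd-dimensional GM variety $W^{\op}$, the Kuznetsov component $\Ku(W)$ is the CY2 cover of the Enriques category $\Ku(W^{\op})$, and the residual $\bZ/2$-action on $\Ku(W)$ is generated by the involution $\sigma := \Phi_W[-1]$, inducing an involutive isometry $\sigma_*$ on $\tH(\Ku(W), \bZ)$. My plan is first to show the containment $A_1^{\oplus 2} \subset \tH(\Ku(W), \bZ)^{\bZ/2}$, and then to obtain equality by showing that $\sigma_*$ acts as $-1$ on the orthogonal complement $\tH(\Ku(W), \bZ)_0$; for the latter I will treat the special case geometrically and reduce the ordinary case to it by deformation.

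For the containment, recall from Proposition~\ref{proposition-primitive-cohomology}\eqref{A12} that $A_1^{\oplus 2}$ is the image of the map $\rK_0(\Gr(2,5)) \to \tH(\Ku(W), \bZ)$ given by restriction and projection into $\Ku(W)$. Since $\Phi_W = \rL_{\cU_W} \circ \rL_{\cO_W} \circ (- \otimes \cO_W(1))$ is built entirely from operations and objects pulled back from $\Gr(2,5)$, the induced action on $K$-theory --- via the mutation formula $[\rL_E F] = [F] - \chi(E, F)[E]$ through exceptional objects --- preserves this sublattice, and a direct check on the two generators using topological Euler pairings on the Grassmannian shows that $\sigma_*$ acts trivially on $A_1^{\oplus 2}$.

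For the reverse inclusion, suppose first that $W$ is special, so $\pi \colon W \to G_1 := \Gr(2,5) \cap \bP^{n+3}$ is the double cover with covering involution $i$ and $\sigma = i_*$ by Theorem~\ref{theorem-K3-cover-examples}\eqref{GM-involution}. Under the Chern-class isomorphism $c_{n/2} \colon \tH(\Ku(W), \bZ)_0 \xrightarrow{\sim} \rH^n(W, \bZ)_0(\tfrac{n}{2} - 1)$ of Proposition~\ref{proposition-primitive-cohomology}\eqref{Hprim}, the action of $\sigma_*$ corresponds to $i^*$ on middle cohomology (using $i^* = i_*$ since $i$ is an isomorphism), and I claim this acts as $-1$. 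Indeed, the $+1$-eigenspace of $i^*$ on $\rH^n(W, \bQ)$ is $\pi^* \rH^n(G_1, \bQ)$: when $n = 6$ we tautologically have $G_1 = \Gr(2,5)$, while for $n = 4$ the target $G_1$ is a degree-$5$ del Pezzo fourfold with Hodge diamond concentrated on the diagonal and $h^{2,2}(G_1) = 2$, so $\rH^4(\Gr(2,5), \bQ) \xrightarrow{\sim} \rH^4(G_1, \bQ)$. In both cases the $+1$-eigenspace coincides with the image of $\rH^n(\Gr(2,5), \bQ)$ in $\rH^n(W, \bQ)$, so the orthogonal $\rH^n(W, \bZ)_0$ is contained in the $-1$-eigenspace.

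This finishes the $n = 6$ case, since all GM sixfolds are special. For $n = 4$ the ordinary case follows from the special one by a deformation argument: the moduli of GM fourfolds is irreducible and contains both types, while the topological Mukai lattice $\tH(\Ku(W), \bZ) \subset \Ktop[0](W)$ and the action of $\sigma_*$ depend only on $K$-theoretic data that is locally constant in a smooth family, so the identification of the invariant sublattice with $A_1^{\oplus 2}$ propagates from any special fiber to every fiber. I expect the main technical obstacle to be the explicit verification in the second paragraph that $\sigma_*$ fixes the generators of $A_1^{\oplus 2}$, which requires a careful unraveling of the mutation formulas; by contrast, the conceptual core of the argument --- the vanishing of primitive middle cohomology on the del Pezzo fourfold $G_1$ --- makes the sign computation on $\tH(\Ku(W), \bZ)_0$ work cleanly once that step is in hand.
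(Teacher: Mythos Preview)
Your strategy matches the paper's: reduce by deformation to the special case, and there identify the residual involution with the covering involution $i_*$, whose $+1$-eigenspace on $\rH^n(W,\bQ)$ is the image of $\rH^n(\Gr(2,5),\bQ)$. The paper makes the deformation step precise via a relative autoequivalence $\Phi_{\cW}[-1]$ on a family (Lemma~\ref{lemma-relative-KuGM}), but your local-constancy sketch is the same idea.

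There is, however, a genuine gap. Your two ingredients --- the containment $A_1^{\oplus 2} \subset \tH(\Ku(W),\bZ)^{\bZ/2}$ and the fact that $\sigma_*$ acts as $-1$ on $\tH(\Ku(W),\bZ)_0$ --- only yield that the invariant sublattice equals the \emph{saturation} of $A_1^{\oplus 2}$, not $A_1^{\oplus 2}$ itself. Indeed, any vector fixed by $\sigma_*$ lies in $A_1^{\oplus 2}\otimes\bQ$, but integrality is not automatic. The paper closes this gap by invoking primitivity of $A_1^{\oplus 2} \subset \tH(\Ku(W),\bZ)$, which follows from primitivity of $\rH^n(\Gr(2,5),\bZ)\subset\rH^n(W,\bZ)$ (known from \cite{DIM4fold}) via the lattice correspondence of Proposition~\ref{proposition-tH}\eqref{K-primitive}. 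You should add this step.

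A smaller point: your containment argument relies on a ``direct check'' that $\sigma_*$ fixes the two generators of $A_1^{\oplus 2}$, which you do not carry out; the isometry group of $A_1^{\oplus 2}$ has several involutions, so this is not free. The paper sidesteps this computation entirely by proving the containment only in the special case, where it is immediate: classes pulled back from $\Gr(2,5)$ are tautologically fixed by the covering involution, hence so is their projection to $\Ku(W)$. Since you are deforming to the special case anyway, you may as well adopt this cleaner route.
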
 

\begin{remark} 
The residual $\bZ/2$-action on $\tH(\Ku(W), \bZ)$ is by isometries, so its $-1$-eigenspace must be exactly the orthogonal complement $\tH(\Ku(W), \bZ)_0$ of $A_1^{\oplus 2}$. 
%Since the residual $\bZ/2$-action induces isometries on cohomology, its $-1$-eigenspace must be exactly the orthogonal complement $\tH(\Ku(W), \bZ)_0$ of $A_1^{\oplus 2}$.
In particular, the residual $\bZ/2$-action on $\Ku(W)$ is \emph{antisymplectic}, 
in the sense that it acts by $-1$ on $\tH^{2,0}(\Ku(W))$.  
This induces antisymplectic involutions of Bridgeland moduli spaces of objects 
in $\Ku(W)$ with class in $A_1^{\oplus 2}$, 
giving a categorical interpretation for the antisymplectic involutions from \cite[Proposition 5.16]{GM-stability} whose 
existence was guaranteed there lattice theoretically. 
The geometry of the fixed loci of these involutions will be described in the upcoming work \cite{moduli-enriques}. 
\end{remark} 

We will prove Proposition~\ref{proposition-tH-A12} by showing the claim is 
deformation invariant, and then checking it for a specific $W$ where the claim 
is easy. 
For this, we will need to consider families of GM varieties and their 
Kuznetsov components. 
By a family of GM varieties over a base $S$, we mean a smooth proper morphism 
$\pi \colon \cW \to S$ equipped with a line bundle $\cO_{\cW}(1)$ on $\cW$, such that for every point $s \in S$ the pair $(\cW_s, \cO_{\cW_s}(1))$ is a GM variety with the Pl\"{u}cker polarization. 

The results of \cite{DebKuz:birGM} show that for any family of GM varieties, there is a canonical rank $5$ vector bundle $\cV_5$ on $S$ and a morphism $\cW \to \Gr_S(2, \cV_5)$ which on fibers recovers the usual map to $\Gr(2,5)$. 
We denote by $\cU_{\cW}$ the pullback to $\cW$ of the tautological rank $2$ 
subbundle on $\Gr_S(2,\cV_5)$. 

\begin{lemma}
\label{lemma-relative-KuGM}
Let $\pi \colon \cW \to S$ be a family of $n$-dimensional GM varieties. 
\begin{enumerate}
\item \label{relative-KuGM}
There is an $S$-linear semiorthogonal decomposition 
\begin{multline*}
\Dperf(\cW)  = \langle \Ku(\cW),  \pi^*(\Dperf(S)) \otimes \cU_{\cW}, \pi^*(\Dperf(S)) \otimes \cO_{\cW}, 
\dots \\ 
\dots, \pi^*(\Dperf(S)) \otimes \cU_{\cW}(n-3), \pi^*(\Dperf(S)) \otimes \cO_{\cW}(n-3) \rangle 
\end{multline*} 
such that the fiber of $\Ku(\cW)$ over any point $s \in S$ satisfies $\Ku(\cW)_s \simeq \Ku(\cW_s)$, 
where the right side is defined by~\eqref{DbW}. 
\item \label{relative-OW}
Let 
\begin{equation*}
\Phi_{\cW} = \rL_{\cU_{\cW}/S} \circ \rL_{\cO_{\cW}/S}  \circ (- \otimes \cO_{\cW}(1)) 
\colon \Ku(\cW) \to \Ku(\cW) 
\end{equation*} 
where for $E = \cU_{W}$ or $\cO_{\cW}$ the functor $\rL_{E/S}$ is defined by 
the exact triangle 
\begin{equation*}
\pi^* \cHom_S(E, F) \otimes E \to F \to \rL_{E/S}(F) 
\end{equation*} 
for $F \in \Dperf(\cW)$. 
Then $\Phi_{\cW}$ is an autoequivalence of $\Ku(\cW)$, whose fiber over 
any $s \in S$ recovers the autoequivalence $\Phi_{\cW_s}$ from Theorem~\ref{theorem-K3-cover-examples}\eqref{GM-involution} under 
the identification $\Ku(\cW)_s \simeq \Ku(\cW_s)$. 
\end{enumerate}
\end{lemma}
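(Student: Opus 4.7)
\medskip

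\noindent\textbf{Proof plan for Lemma~\ref{lemma-relative-KuGM}.} The plan is to deduce both statements from the corresponding fiberwise results by using the standard machinery of base change for $S$-linear semiorthogonal decompositions and for functors between smooth proper $S$-linear categories.

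For part~\eqref{relative-KuGM}, the first step is to verify that
\[
\cU_{\cW}, \cO_{\cW}, \ldots, \cU_{\cW}(n-3), \cO_{\cW}(n-3)
\]
is a relative exceptional collection in $\Dperf(\cW)$ in the sense of \S\ref{section-HH-lincats}. Since $\pi$ is smooth and proper and the fiberwise exceptionality is known (it defines $\Ku(\cW_s)$ via~\eqref{DbW}), cohomology and base change shows that for $E, F$ in this collection the relative mapping object $\cHom_S(E,F) = \pi_*\cHom_{\cW}(E,F)$ is either zero, $\cO_S[0]$, or supported in the expected degrees; this gives relative exceptionality. The general theory of $S$-linear semiorthogonal decompositions generated by a relative exceptional collection (as in \cite{kuznetsov-bc} and reviewed in \S\ref{section-HH-lincats}) then produces the decomposition with the stated orthogonal complement $\Ku(\cW)$, and base change for $S$-linear SODs identifies the fiber $\Ku(\cW)_s$ with the Kuznetsov component $\Ku(\cW_s)$ defined by~\eqref{DbW}.

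For part~\eqref{relative-OW}, the relative left mutation $\rL_{E/S}$ through an object $E$ of a relative exceptional collection is a well-defined $S$-linear functor on $\Dperf(\cW)$, constructed exactly by the triangle in the statement; it takes values in the right orthogonal to $\pi^*\Dperf(S) \otimes E$. Composing the tensor with $\cO_{\cW}(1)$ with the two mutations $\rL_{\cO_{\cW}/S}$ and $\rL_{\cU_{\cW}/S}$ gives a functor that, starting from $F \in \Ku(\cW)$, lands in the right orthogonal of the last $2(n-3)+2$ objects of the collection, hence again in $\Ku(\cW)$ after an appropriate shift of grading; this is the standard ``rotation along an SOD'' construction. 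Base change for mapping objects (Remark~\ref{remark-HH-bc}, or more directly the K\"unneth formula of \cite[Lemma~2.10]{NCHPD}) shows that the defining triangle for $\rL_{E/S}$ restricts on each fiber to the triangle defining the absolute left mutation through $E|_{\cW_s}$, so $(\Phi_{\cW})_s$ is identified with $\Phi_{\cW_s}$.

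It remains to prove $\Phi_{\cW}$ is an autoequivalence. The main observation is that $\Phi_{\cW}$ has both left and right adjoints, since $\Ku(\cW)$ is smooth and proper over $S$ and its fibers are. The unit and counit of either adjunction are morphisms of $S$-linear endofunctors of $\Ku(\cW)$; by part~\eqref{relative-OW} combined with Theorem~\ref{theorem-K3-cover-examples}\eqref{GM-involution}, these become isomorphisms on each fiber of $S$. The cones of the unit and counit are thus $S$-linear endofunctors whose fibers vanish; since $\Ku(\cW)$ is smooth and proper over $S$, these cones correspond to objects of a perfect $S$-linear category that vanish fiberwise, hence vanish. The main obstacle in this plan is precisely this last step, where one must upgrade fiberwise vanishing of a natural transformation to global vanishing; this is handled using the equivalence $\Fun_S(\cC,\cC) \simeq \cC^{\op} \otimes_{\Dperf(S)} \cC$ for smooth proper $S$-linear categories and the fact that perfect complexes on $S$ that vanish on all fibers vanish globally, both recalled in \S\ref{section-HH-lincats}.
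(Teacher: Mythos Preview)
Your plan for part~\eqref{relative-KuGM} matches the paper's argument: relative exceptionality on fibers plus base change for $S$-linear semiorthogonal decompositions.

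For part~\eqref{relative-OW}, your route to the autoequivalence claim differs from the paper's. You argue indirectly: construct adjoints to $\Phi_{\cW}$, verify via Theorem~\ref{theorem-K3-cover-examples}\eqref{GM-involution} that the unit and counit are fiberwise isomorphisms, and then promote this to a global statement by viewing the cones as objects of the smooth proper $S$-linear category $\Fun_S(\Ku(\cW),\Ku(\cW)) \simeq \Ku(\cW)^{\op} \otimes_{\Dperf(S)} \Ku(\cW)$ (which sits inside $\Dperf(\cW \times_S \cW)$) and invoking Nakayama. This works, but the paper instead observes directly that each factor of $\Phi_{\cW}$ is already an equivalence between admissible pieces of $\Dperf(\cW)$: for $n \geq 3$ the functor $\rL_{E/S}$ is the left mutation through the admissible subcategory $\pi^*\Dperf(S)\otimes E$, while for $n = 2$ it is the spherical twist around the spherical functor $\Dperf(S) \to \Dperf(\cW)$, $F \mapsto \pi^*F \otimes E$. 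Since tensoring by $\cO_{\cW}(1)$ is also an autoequivalence, the composition is an equivalence onto its image, which one checks is $\Ku(\cW)$ by the usual rotation argument.

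One point your write-up glosses over is exactly this $n = 2$ case: you speak throughout of ``an object $E$ of a relative exceptional collection'', but for a family of GM surfaces the collection defining $\Ku(\cW)$ is empty (so $\Ku(\cW) = \Dperf(\cW)$) and $\cU_{\cW}, \cO_{\cW}$ are fiberwise spherical rather than exceptional. Your fiberwise-to-global argument is insensitive to this and still goes through, but the ``relative mutation'' framing you use to show $\Phi_{\cW}$ lands in $\Ku(\cW)$ and has adjoints does not literally apply; you should note that the defining triangle for $\rL_{E/S}$ is then that of a relative spherical twist. The paper's structural identification handles both cases uniformly and avoids the detour through fiberwise vanishing.
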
 

\begin{remark}
\label{remark-involution-fibers}
Note that we do not claim $\Phi_{\cW}[-1]$ is necessarily an involution, but it is fiberwise an involution by 
Theorem~\ref{theorem-K3-cover-examples}\eqref{GM-involution}. 
\end{remark} 

\begin{proof} 
The semiorthogonal decomposition in~\eqref{relative-KuGM} follows from the 
decomposition \eqref{DbW} on fibers, cf.~\cite[Lemma 3.22 and 3.25]{stability-families} and \cite{samokhin}, and the statement about the fibers of $\Ku(\cW)$ is a consequence of the compatibility of base change with semiorthogonal decompositions. 
For $E = \cU_W$ or $\cO_W$, the functor 
$\rL_{E/S}$ is nothing but the left mutation through the admissible subcategory $\pi^*(\Dperf(S)) \otimes E$ if $n \geq 3$, and the spherical twist around the spherical functor 
$\Dperf(S) \to \Dperf(\cW), F \mapsto \pi^*(F) \otimes E$ if $n = 2$; 
it follows in particular that $\Phi_{\cW}$ is indeed an autoequivalence of $\Ku(\cW)$. 
The final claim about the fibers of $\Phi_{\cW}$ is immediate from the definitions. 
\end{proof}

\begin{proof}[Proof of Proposition~\ref{proposition-tH-A12}]
Suppose $\pi \colon \cW \to S$ is a family of GM varieties of dimension $n = 4$ or $6$. 
By \cite{IHC-CY2} the Mukai Hodge structures $\tH(\Ku(\cW_s), \bZ)$, $s \in S(\bC)$, form the 
fibers of a local system $\tH(\Ku(\cW)/S, \bZ)$ on the analytification $S^{\an}$. 
By functoriality the autoequivalence $\Phi_{\cW}[-1]$ induces an automorphism 
of the local system $\tH(\Ku(\cW)/S, \bZ)$, which is necessarily an involution because it is so on fibers (see Remark~\ref{remark-involution-fibers}). 
Therefore, we have a $\bZ/2$-action on the local system 
$\tH(\Ku(\cW)/S, \bZ)$ which fiberwise recovers the residual 
$\bZ/2$-action on the Mukai Hodge structures. 
So it suffices to prove the proposition for any particular fiber of the family $\cW \to S$. 

We may thus assume $W$ is a special GM variety. 
Then by Theorem~\ref{theorem-K3-cover-examples} 
the residual $\bZ/2$-action on $\Ku(W)$ is induced by 
the covering involution of $W$. 
By construction $\tH(\Ku(W), \bZ)$ is a summand of $\Ktop[0](W)$, 
and by the previous remark the inclusion $\tH(\Ku(W), \bZ) \subset \Ktop[0](W)$ is $\bZ/2$-equivariant, where $\bZ/2$ acts by the residual action on the left and the covering involution on the right. 
By the description of $A_1^{\oplus 2} \subset  \tH(\Ku(W), \bZ)$ from Proposition~\ref{proposition-primitive-cohomology}\eqref{A12}, 
it follows that $A_1^{\oplus 2} \subset \tH(\Ku(W), \bZ)^{\bZ/2}$. 
Moreover, as $\rH^4(\Gr(2,5), \bZ) \subset \rH^4(W, \bZ)$ is primitive by \cite[\S5.1]{DIM4fold}, it follows from Proposition~\ref{proposition-tH}\eqref{K-primitive} that 
$A_1^{\oplus 2} \subset \tH(\Ku(W), \bZ)$ is also primitive. 
Therefore, to finish the proof it suffices to show the inclusion $A_1^{\oplus 2} \subset \tH(\Ku(W), \bZ)^{\bZ/2}$ is rationally an isomorphism. 
By applying the Chern character, 
it is enough to show $\rH^*(\Gr(2,5), \bQ)$ surjects onto the invariants of 
$\rH^*(W, \bQ)$. 
If $\dim(W) = 6$ this is true because $W \to \Gr(2,5)$ is a double cover. 
If $\dim(W) = 4$ then $W \to \Gr(2,5) \cap \bP^8$ is a double cover, so it is enough to observe that 
$\rH^*(\Gr(2,5), \bQ)$ surjects onto $\rH^*(\Gr(2,5) \cap \bP^8, \bQ)$; this follows, for instance, from the semiorthogonal decompositions 
\begin{align*}
\Db(\Gr(2,5)) & =  \llangle \cO, \cU^{\svee}, \dots, \cO(4), \cU^{\svee}(4) \rrangle , \\  
\Db(\Gr(2,5) \cap \bP^8 ) & = \llangle \cO, \cU^{\svee}, \dots, \cO(3), \cU^{\svee}(3) \rrangle , 
\end{align*}
which hold by \cite[\S6.1]{kuznetsov-hyperplane}. 
\end{proof} 

\subsection{Application to periods} 
\label{subsection-application-periods}
In \cite{DebKuz:birGM} Debarre and Kuznetsov classified 
GM varieties in terms of linear algebraic data, 
by constructing for any GM variety $W$ a \emph{Lagrangian data set} $(V_6(W), V_5(W), A(W))$, where 
\begin{itemize} 
\item $V_6(W)$ is a $6$-dimensional vector space, 
\item $V_5(W) \subset V_6(W)$ is a hyperplane, and 
\item $A(W) \subset \wedge^3 V_6(W)$ is a Lagrangian subspace with respect to the wedge product, 
\end{itemize} 
and proving that $W$ is completely determined by its dimension and this data.  
Surprisingly, many properties of $W$ only depend on $A(W)$. 
Recall that GM varieties $W_1$ and $W_2$ with $\dim(W_1) \equiv \dim(W_2) \pmod 2$ 
are called \emph{generalized partners} if there exists an isomorphism $V_6(W_1) \cong V_6(W_2)$ identifying $A(W_1) \subset \wedge^3 V_6(W_1)$ with $A(W_2) \subset \wedge^3V_6(W_2)$, and  
\emph{generalized duals} if there exists an isomorphism $V_6(W_1) \cong V_6(W_2)^{\svee}$ identifying $A(W_1) \subset \wedge^3 V_6(W_1)$ with $A(W_2)^{\perp} \subset \wedge^3V_6(W_2)^{\svee}$. 
In the case where $\dim(W_1) = \dim(W_2)$, these definitions specialize to the notions of \emph{period partners} and \emph{duals} originally introduced in \cite{DebKuz:birGM}. 

The main result of \cite{DebKuz:periodGM} shows that $A(W)$ determines the period of $W$ in dimensions $4$ and $6$, 
which implies the period map factors through the moduli space of EPW sextics studied by O'Grady \cite{OG-EPW}. More precisely: 

\begin{theorem}[\cite{DebKuz:periodGM}]
\label{theorem-periods-DK}
Let $W_1$ and $W_2$ be GM varieties of (possibly unequal) dimensions $n_1, n_2 \in \{4,6\}$. 
Assume $W_1$ and $W_2$ are generalized partners or duals. Then there is an isometry of Hodge structures 
\begin{equation*}
\rH^{n_1}(W_1, \bZ)_0(\tfrac{n_1}{2}) \cong \rH^{n_2}(W_2, \bZ)_0(\tfrac{n_2}{2}) . 
\end{equation*} 
\end{theorem}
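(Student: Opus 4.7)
The plan is to deduce the theorem from Proposition~\ref{proposition-GM-periods} together with the Kuznetsov--Perry theory relating Kuznetsov components of generalized partners and duals. Since $\tH(\Ku(W), \bZ)_0$ and its $\bZ/2$-action depend only on $\Ku(W)$ together with its residual $\bZ/2$-action coming from the presentation as CY2 cover of the Enriques category $\Ku(W^{\op})$, the theorem will follow once we have a $\bZ/2$-equivariant equivalence $\Ku(W_1) \simeq \Ku(W_2)$.

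First I would invoke the Kuznetsov--Perry theorem \cite{GM-derived}: for generalized partners or duals $W_1$ and $W_2$ of GM varieties of arbitrary (possibly distinct) dimensions in $\{3,4,5,6\}$, there is an equivalence of $\bC$-linear categories induced by the common Lagrangian datum. Applied to the odd-dimensional opposite varieties $W_i^{\op}$, which inherit the same Lagrangian data as $W_1$ and $W_2$, this produces an equivalence $\Ku(W_1^{\op}) \simeq \Ku(W_2^{\op})$ of Enriques categories; by Proposition~\ref{proposition-enriques-examples}\eqref{GM-enriques} these are connected, so Lemma~\ref{lemma-K3-cover-equivalence} promotes this equivalence for free to a $\bZ/2$-equivariant equivalence of the CY2 covers, which by Theorem~\ref{theorem-K3-cover-examples}\eqref{K3-cover-GM} is precisely a $\bZ/2$-equivariant equivalence $\Ku(W_1) \simeq \Ku(W_2)$. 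By the functoriality of the construction in \cite{IHC-CY2}, this induces a $\bZ/2$-equivariant isometry $\tH(\Ku(W_1), \bZ) \cong \tH(\Ku(W_2), \bZ)$ of weight-two Hodge structures; and by Proposition~\ref{proposition-tH-A12}, equivariance forces it to carry the canonical sublattice $A_1^{\oplus 2}$ onto $A_1^{\oplus 2}$, and hence to restrict to an isometry $\tH(\Ku(W_1), \bZ)_0 \cong \tH(\Ku(W_2), \bZ)_0$. Finally, applying Proposition~\ref{proposition-GM-periods} on each side and twisting by $\bZ(1)$ delivers the desired isometry
\[ \rH^{n_1}(W_1, \bZ)_0\bigl(\tfrac{n_1}{2}\bigr) \cong \rH^{n_2}(W_2, \bZ)_0\bigl(\tfrac{n_2}{2}\bigr). \]

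The main obstacle is marshalling the Kuznetsov--Perry equivalence in the requisite generality --- in particular for partners or duals of unequal dimensions --- and setting up the opposite variety bookkeeping so that the Lagrangian data governing $\Ku(W_i^{\op})$ indeed matches between $i = 1$ and $i = 2$. Once this is in place the argument is essentially formal: the Enriques--K3 correspondence of \S\ref{section-enriques} converts an equivalence of Enriques categories into an equivariant equivalence of K3 categories with no extra work, and Proposition~\ref{proposition-tH-A12} converts this categorical equivariance into the lattice-level statement that pins down the orthogonal complement of $A_1^{\oplus 2}$, which is exactly the primitive cohomology by Proposition~\ref{proposition-primitive-cohomology}\eqref{Hprim}.
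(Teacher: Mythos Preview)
Your proposal is correct and follows essentially the same route as the paper: pass to the odd-dimensional opposites $W_i^{\op}$ (which share Lagrangian data with $W_i$, hence are themselves generalized partners or duals), invoke the equivalence $\Ku(W_1^{\op}) \simeq \Ku(W_2^{\op})$, use Lemma~\ref{lemma-K3-cover-equivalence} and Theorem~\ref{theorem-K3-cover-examples}\eqref{K3-cover-GM} to upgrade to a $\bZ/2$-equivariant equivalence $\Ku(W_1) \simeq \Ku(W_2)$, and then read off the isometry of primitive cohomology via Proposition~\ref{proposition-tH-A12} and Proposition~\ref{proposition-primitive-cohomology}\eqref{Hprim}. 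The only correction is bibliographic: the equivalence of Kuznetsov components for arbitrary generalized partners or duals is \cite[Theorem~1.6]{categorical-cones}, not \cite{GM-derived} (where it was only conjectured in full generality); this is exactly the input the paper uses in its Theorem~\ref{theorem-duality}.
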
 

This is proved in \cite{DebKuz:periodGM} by intricate geometric arguments, 
but here we note a simple categorical proof. 
This relies on the following slight enhancement of \cite[Theorem 1.6]{categorical-cones}. 

\begin{theorem}
\label{theorem-duality} 
Let $W_1$ and $W_2$ be GM varieties which are not special GM surfaces. 
If $W_1$ and $W_2$ are generalized partners or duals, then there is an 
equivalence $\Ku(W_1) \simeq \Ku(W_2)$ which is equivariant for the 
canonical $\bZ/2$-actions described in Theorem~\ref{theorem-K3-cover-examples}\eqref{GM-involution}. 
\end{theorem}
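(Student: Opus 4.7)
The underlying $\bC$-linear equivalence $F \colon \Ku(W_1) \simeq \Ku(W_2)$ is already supplied by \cite[Theorem~1.6]{categorical-cones}, so what remains is to enhance $F$ to an equivalence of $\bC$-linear categories equipped with the $\bZ/2$-actions generated by $\Phi_{W_i}[-1]$. The plan is to argue by splitting into cases according to the common parity of $\dim(W_1)$ and $\dim(W_2)$, which must agree by the definition of generalized partner or dual.

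When $\dim(W_i)$ is odd, Theorem~\ref{theorem-K3-cover-examples}\eqref{GM-involution} identifies $\Phi_{W_i}[-1]$ with the shifted Serre functor $\rS_{\Ku(W_i)}[-2]$, and Proposition~\ref{proposition-enriques-examples}\eqref{GM-enriques} exhibits $\Ku(W_i)$ as a connected Enriques category. Lemma~\ref{lemma-K3-cover-equivalence}, and more precisely the proof that (1) implies (2) given there, shows that \emph{any} $\bC$-linear equivalence between two such categories is automatically equivariant for the $\bZ/2$-actions generated by the shifted Serre functors. Consequently $F$ itself is the desired enhancement, with no additional work required.

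When $\dim(W_i)$ is even, $\Ku(W_i)$ is instead a connected K3 category (Proposition~\ref{proposition-enriques-examples}\eqref{GM-K3}) and $\Phi_{W_i}[-1]$ is the residual involution attached to the CY2 cover description, which is not an intrinsic invariant of $\Ku(W_i)$ in the way $\rS[-2]$ is. Here I would proceed in two steps. First, I would establish $1$-categorical equivariance of $F$, namely an isomorphism $F \circ \Phi_{W_1}[-1] \simeq \Phi_{W_2}[-1] \circ F$ of $\bC$-linear autoequivalences. Both $\Phi_{W_i}$ and the duality equivalence $F$ of \cite{categorical-cones} are constructed from mutations, twists by $\cO_{W_i}(1)$, and Fourier--Mukai kernels attached to the Pl\"ucker geometry encoded in the Lagrangian data set $(V_6, V_5, A)$; since generalized partners and duals by definition share this ambient datum (up to the identification $V_6 \cong V_6^\vee$ in the dual case), one expects $F$ to interchange the respective Pl\"ucker structures and hence to intertwine the defining recipes of $\Phi_{W_i}$. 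Second, I would promote this $1$-categorical equivariance to an $\infty$-categorical one. By connectedness of $\Ku(W_i)$, we have $\HH^i(\Ku(W_i)/\bC) = 0$ for $i < 0$ and $\HH^0(\Ku(W_i)/\bC) = \bC$, so Corollary~\ref{corollary-obstruction-HH} applies with trivial $\bZ/2$-action on $\bC^\times$: equivalence classes of $\infty$-lifts of a given $1$-categorical $\bZ/2$-action form a torsor over $\rH^2(B\bZ/2, \bC^\times) = 0$, while the obstruction to existence lies in $\rH^3(B\bZ/2, \bC^\times) \cong \bZ/2$ and vanishes a priori because Theorem~\ref{theorem-K3-cover-examples}\eqref{GM-involution} already exhibits an $\infty$-lift on each side. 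Hence the $\infty$-action on $\Ku(W_2)$ transported by $F$ must agree up to equivalence with the given one, yielding the sought equivariance.

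The main obstacle is the first step in the even-dimensional case: carrying out the comparison $F \circ \Phi_{W_1}[-1] \simeq \Phi_{W_2}[-1] \circ F$ requires extracting from \cite[Theorem~1.6]{categorical-cones} a description of the kernel of $F$ that is functorial enough to track its interaction with the Pl\"ucker mutations and twists defining $\Phi_{W_i}$. The $\infty$-categorical upgrade in the second step is by contrast a purely formal consequence of the obstruction-theoretic machinery of Section~\ref{section-group-actions-cats}, dependent only on the vanishing $\rH^2(B\bZ/2, \bC^\times) = 0$ and the prior existence of $\infty$-lifts on each side.
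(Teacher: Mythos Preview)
Your odd-dimensional case is exactly the paper's argument. In the even-dimensional case, however, you have identified but not resolved a genuine obstacle: verifying $F \circ \Phi_{W_1}[-1] \simeq \Phi_{W_2}[-1] \circ F$ directly from the construction in \cite{categorical-cones} would require unwinding the homological projective duality and categorical joins machinery there, and your sketch (``one expects $F$ to interchange the respective Pl\"ucker structures'') does not constitute a proof. Your second step, the obstruction-theoretic upgrade, is sound but moot without the first.

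The paper circumvents this entirely by running the Enriques--CY2 correspondence in the reverse direction. By Theorem~\ref{theorem-K3-cover-examples}\eqref{K3-cover-GM}, when $\dim W_i$ is even the category $\Ku(W_i)$ with its residual $\bZ/2$-action is precisely the CY2 cover of the connected Enriques category $\Ku(W_i^{\op})$. Lemma~\ref{lemma-K3-cover-equivalence} (the equivalence of \eqref{C1-C2} and \eqref{D1-D2-Z2}) then says that a $\bZ/2$-equivariant equivalence $\Ku(W_1) \simeq \Ku(W_2)$ exists if and only if there is \emph{any} equivalence $\Ku(W_1^{\op}) \simeq \Ku(W_2^{\op})$. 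Since a GM variety and its opposite share the same Lagrangian data, $W_1^{\op}$ and $W_2^{\op}$ are themselves odd-dimensional generalized partners or duals, and \cite[Theorem~1.6]{categorical-cones} applies to them directly. This reduces the even case to a second invocation of the black-box equivalence and never requires tracking any kernel through the rotation functors $\Phi_{W_i}$.
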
 

\begin{proof}
By \cite[Theorem 1.6]{categorical-cones} there exists an equivalence 
$\Ku(W_1) \simeq \Ku(W_2)$, not a priori $\bZ/2$-equivariant. 
If $W_1$ and $W_2$ are odd-dimensional, then by Lemma~\ref{lemma-K3-cover-equivalence} this equivalence is necessarily $\bZ/2$-equivariant. 
If $W_1$ and $W_2$ are even-dimensional, then by Theorem~\ref{theorem-K3-cover-examples}\eqref{K3-cover-GM} combined with Lemma~\ref{lemma-K3-cover-equivalence}, 
it suffices to show there is an equivalence $\Ku(W_1^{\op}) \simeq \Ku(W_2^{\op})$. 
But by construction the Lagrangian data of a GM variety and its opposite GM variety are the same, so $W_1^{\op}$ and $W_2^{\op}$ are odd-dimensional generalized partners or duals, and the equivalence holds again by \cite[Theorem 1.6]{categorical-cones}. 
\end{proof} 

\begin{proof}[Proof of Theorem~\ref{theorem-periods-DK}]
The $\bZ/2$-equivariant equivalence $\Ku(W_1) \simeq \Ku(W_2)$ of Theorem~\ref{theorem-duality} 
induces a $\bZ/2$-equivariant isomorphism $\tH(\Ku(W_1), \bZ) \cong \tH(\Ku(W_2), \bZ)$, 
which by Proposition~\ref{proposition-tH-A12} must identify the canonical copy of $A_1^{\oplus 2}$ on each side. 
Now the result follows from Proposition~\ref{proposition-primitive-cohomology}\eqref{Hprim}. 
\end{proof}

%%%%%%%%%%%%%%%%%%%%%%%%%%%%%%%%%%%%%%%%%%%%%%%%%%%%%%

\section{Birational categorical Torelli} 
\label{section-torelli}

In this section we work over the complex numbers. 
After reviewing some facts about the period map for GM fourfolds, 
we prove Theorem~\ref{main-theorem-3} in \S\ref{section-proof-main-theorem-3}. 

\subsection{The period morphism}
\label{section-period-morphism}
Let $\cN$ denote the moduli stack of GM fourfolds. 
This is a smooth irreducible $24$-dimensional Deligne--Mumford stack of finite type over $\bC$ (see \cite[Proposition A.2]{GM-derived} or \cite{DebKuz:moduli}). 
We denote by 
\begin{equation*}
p \colon \cN \to \cD 
\end{equation*}
the period map, where the period domain $\cD$ is the $20$-dimensional quasi-projective variety classifying Hodge structures on the middle cohomology $\rH^4(W_0, \bZ)$ of a fixed GM fourfold $W_0$ for which the canonical rank $2$ sublattice $\rH^4(\Gr(2,5),\bZ) \subset \rH^4(W_0, \bZ)$ consists of Hodge classes (see \cite{DIM4fold} for details). 
We note that $\cD$ is equipped with a canonical involution, denoted $r_{\cD}$ (see the discussion preceding \cite[Corollary 6.3]{DIM4fold}). 

We will also need to consider the related moduli space
$\cN^{\EPW}$ of EPW sextics \cite{OGrady-moduli}. 
Recall that, if we fix $V_6$ a $6$-dimensional vector space, then 
$\cN^{\EPW}$ is the GIT quotient by $\PGL(V_6)$ of the space of Lagrangians $A \subset \wedge^3 V_6$ 
containing no decomposable vectors. The space $\cN^{\EPW}$ has a natural involution $r^{\EPW}$, induced by sending $A \subset \wedge^3 V_6$ to its orthogonal $A^{\perp} \subset \wedge^4 V_6^{\svee}$. 
O'Grady constructed a period morphism 
\begin{equation*}
p^{\EPW} \colon \cN^{\EPW} \to \cD,
\end{equation*}
sending $A$ to the period of the associated double EPW sextic \cite{ogrady2015periods}. 

The results of \cite{DebKuz:birGM, DebKuz:moduli} show that there is a surjective morphism 
\begin{equation*}
\pi \colon \cN \to \cN^{\EPW},
\end{equation*} 
sending a GM fourfold $W$ to its Lagrangian $A(W)$ (as in \S\ref{subsection-application-periods}). 
In particular, by definition GM fourfolds $W_1$ and $W_2$ are period partners if and only if 
$\pi(W_1) = \pi(W_2)$, and duals if and only if $\pi(W_1) = r^{\EPW}(\pi(W_2))$. 

We will need the following two ingredients in our proof of Theorem~\ref{main-theorem-3}. 

\begin{theorem}[{\cite{DebKuz:periodGM}}]
\label{theorem-period-factorization}
There is a factorization  $p = p^{\EPW} \circ \pi$. 
\end{theorem} 

\begin{proof}
This follows from \cite{DebKuz:periodGM}, cf. \cite[Proposition 6.1]{DebKuz:moduli} and Theorem~\ref{theorem-periods-DK} above. 
\end{proof} 

\begin{theorem}
\label{theorem-EPW-torelli}
The morphism $p^{\EPW} \colon \cN^{\EPW} \to \cD$ is an open embedding and commutes with the natural involutions, i.e. $p^{\EPW} \circ r^{\EPW} = r_{\cD} \circ p^{\EPW}$. 
\end{theorem} 

\begin{proof}
That $p^{\EPW}$ is an open embedding follows from Verbitsky's Torelli theorem; see \cite[Theorem 1.3]{ogrady2015periods} for a more precise statement. 
That $p^{\EPW}$ commutes with the involutions is proved in \cite{OG:dualEPW}. 
\end{proof} 

\subsection{Proof of Theorem~\ref{main-theorem-3}} 
\label{section-proof-main-theorem-3}
By the discussion preceding the statement of Theorem~\ref{main-theorem-3}, 
we only need to prove the forward implication. 
So let $X_1$ and $X_2$ be GM threefolds such that $\Ku(X_1) \simeq \Ku(X_2)$. 
Note that if $X$ is a special GM threefold, then we may find an ordinary GM threefold $X'$ which is a period partner of $X$ (see e.g. \cite[Lemma 3.8]{GM-derived}), and hence satisfies   
$\Ku(X) \simeq \Ku(X')$ by \cite[Theorem 1.6]{categorical-cones}. 
Therefore, we may assume that $X_1$ and $X_2$ are both ordinary. 
By Proposition~\ref{proposition-enriques-examples}, Lemma~\ref{lemma-K3-cover-equivalence}, and Theorem~\ref{theorem-K3-cover-examples}, passing to CY2 covers gives an equivalence 
$\Ku(X_1^{\op}) \simeq \Ku(X_2^{\op})$ which is equivariant for the residual $\bZ/2$-actions. 
This induces a $\bZ/2$-equivariant Hodge isometry  
\begin{equation*}
\tH(\Ku(X_1^{\op}), \bZ) \xrightarrow{\, \sim \,} \tH(\Ku(X_2^{\op}), \bZ),  
\end{equation*}
and hence by Proposition~\ref{proposition-GM-periods} a Hodge isometry 
$\rH^4(X_1^{\op}, \bZ)_0 \cong \rH^4(X_2^{\op}, \bZ)_0$. 
By the definition of the period morphism, it follows that either $p(X_1^{\op}) = p(X_2^{\op})$ or 
$p(X_1^{\op}) = r_{\cD}(p(X_2^{\op}))$, cf. \cite[Lemma 5.26]{DebKuz:periodGM}. 
By Theorems~\ref{theorem-period-factorization} and~\ref{theorem-EPW-torelli}, this 
means that either $\pi(X_1^{\op}) = \pi(X_2^{\op})$ or $\pi(X_1^{\op}) = r^{\EPW}(\pi(X_2^{\op}))$, i.e. 
either $X_1^{\op}$ and $X_2^{\op}$ are period partners or duals. 
As these relations are preserved under passing to opposite GM varieties, we conclude the same is true of $X_1$ and $X_2$. \qed

%%%%%%%%%%%%%%%%%%%%%%%%%%%%%%%%%%%%%%%%%%%%%%%%%%%%%%

\section{Nonexistence of equivalences} 
\label{section-nonexistence}

In this section we work over the complex numbers. 
After reviewing some restrictions on the periods of GM fourfolds and surfaces, 
we prove Theorem~\ref{main-theorem} in \S\ref{section-proof-main-theorem}. 

\subsection{Restrictions on periods} 
As in our discussion of the period map $p \colon \cN \to \cD$ for GM fourfolds in \S\ref{section-period-morphism}, let $W_0$ be a fixed GM fourfold. 
Let $K \subset \rH^4(W_0, \bZ)$ be a rank $3$ primitive positive definite sublattice which contains $\rH^4(\Gr(2,5), \bZ)$. 
We consider the locus in $\cD$ parameterizing Hodge structures on 
$\rH^4(W_0, \bZ)$ for which $K \subset \rH^4(W_0, \bZ)$ consists of 
Hodge classes. 
By \cite{DIM4fold} this locus only depends on the discriminant $d>0$ of $K$, 
so we denote it by $\cD_d$. 
Moreover, the locus $\cD_d$ is nonempty for $d \equiv 0, 2, 4 \pmod 8$, 
an irreducible divisor for $d \equiv 0 , 4 \pmod 8$, and the union of two irreducible divisors for $d \equiv 2 \pmod 8$. 

The following restriction on the image of the period morphism plays a crucial role in our proof of Theorem~\ref{main-theorem} below. 

\begin{theorem}[{\cite{ogrady2015periods, DebKuz:periodGM}}]
\label{theorem-image-period-morphism} 
The image of the period morphism $p \colon \cN \to \cD$ is contained in the complement $\cD \setminus (\cD_{2} \cup \cD_{4} \cup \cD_{8})$. 
\end{theorem}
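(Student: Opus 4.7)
The plan is to translate Theorem~\ref{theorem-image-period-morphism} into a lattice-theoretic problem about the Mukai Hodge structure $\tH(\Ku(W), \bZ)$ and then rule out a finite list of possibilities using the geometry of GM fourfolds.

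First, I would apply Proposition~\ref{proposition-tH} to replace a rank~$3$ primitive positive definite sublattice $\rH^4(\Gr(2,5), \bZ) \subset K \subset \rH^4(W, \bZ)$ of discriminant $d$ consisting of Hodge classes with an equivalent rank~$3$ primitive sublattice $A_1^{\oplus 2} \subset K' \subset \tH(\Ku(W), \bZ)$ of signature $(2,1)$ and discriminant $-d$ consisting of Hodge classes. Such a $K'$ is determined by an auxiliary primitive element $v \in K'$, and by elementary discriminant calculations (using that $A_1^{\oplus 2}$ has discriminant group $(\bZ/2)^{\oplus 2}$), the possible pairs (image of $v$ in the discriminant group, value of $v^2$) giving discriminant $-d$ for $d \in \{2, 4, 8\}$ form a short finite list.

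Second, for each candidate $(v, v^2)$ on this list I would show that no actual Hodge class $v \in \tH^{1,1}(\Ku(W)) \cap \tH(\Ku(W), \bZ)$ realizes it. Passing through the Chern character embedding, the image $c_2(v) \in \rH^4(W, \bZ)/\rH^4(\Gr(2,5), \bZ)$ is a Hodge class of type $(2,2)$; by either direct cycle-theoretic arguments (as in \cite{ogrady2015periods}, exploiting the classification of low-degree algebraic surfaces on smooth GM fourfolds such as $\sigma$-planes, $\tau$-quadrics, and cubic scrolls), or via the Debarre--Kuznetsov factorization of the period map through the moduli space of EPW sextics, one checks that a smooth GM fourfold admits no cycle with the prescribed numerical invariants.

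The main obstacle is the case analysis for $d = 4$ and $d = 8$, where the candidate classes are closest to being realizable and the argument requires smoothness of $W$ in an essential way. A clean approach is to use the Lagrangian data $(V_6(W), V_5(W), A(W))$ of \cite{DebKuz:birGM} to reformulate the condition ``$K$ consists of Hodge classes'' in terms of the position of $A(W) \subset \wedge^3 V_6(W)$, and then observe that Lagrangians corresponding to $d \in \{2, 4, 8\}$ force degenerations (such as containment of specific decomposable trivectors) incompatible with smoothness of $W$, as carried out in \cite{DebKuz:periodGM}.
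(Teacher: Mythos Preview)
Your proposal is more elaborate than what the paper actually does. The paper treats this theorem as a result imported from the literature: its entire proof is the observation that by \cite{DebKuz:periodGM} the period map $p \colon \cN \to \cD$ factors through the period map for double EPW sextics, and by \cite[Theorem~1.3]{ogrady2015periods} the latter has image contained in $\cD \setminus (\cD_2 \cup \cD_4 \cup \cD_8)$. That is it --- two citations. Your ``clean approach'' in the final paragraph is precisely this route, so you do identify the argument the paper uses, but you present it as a fallback rather than as the proof.

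The longer strategy you outline first --- passing to the Mukai lattice via Proposition~\ref{proposition-tH}, enumerating candidate classes $v$, and excluding each by cycle-theoretic arguments --- is a plausible reconstruction of how O'Grady's result is proved, but it is not what this paper does, and the detour through $\tH(\Ku(W),\bZ)$ is not buying you anything here: the statement is already phrased in $\rH^4(W,\bZ)$, and you immediately propose passing back via $c_2$ to argue with surfaces on $W$. You could have stayed in $\rH^4$ throughout. More importantly, the ``main obstacle'' you flag (the case analysis for $d=4,8$) is exactly the hard geometric content of \cite{ogrady2015periods}, so your sketch ultimately bottoms out in the same references the paper cites directly.
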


\begin{proof}
By Theorem~\ref{theorem-period-factorization}, 
$p \colon \cN \to \cD$ factors through the period morphism for double EPW sextics, 
which by \cite[Theorem 1.3]{ogrady2015periods} has image contained in $\cD \setminus (\cD_{2} \cup \cD_{4} \cup \cD_{8})$.
\end{proof} 

\begin{remark}
Conjecturally, the image of $p \colon \cN \to \cD$ is exactly $\cD \setminus (\cD_{2} \cup \cD_{4} \cup \cD_{8})$. 
Some partial progress on this problem was made in 
\cite{DIM4fold}, where it is shown that $p$ is dominant and 
$p^{-1}(\cD_d)$ is nonempty for $d \geq 10$. 
\end{remark}

Next we give a similar but much easier result restricting the  
periods of ordinary GM surfaces. 
We will use this in one of two proofs given below for Theorem~\ref{main-theorem} in the case of special GM threefolds. 

\begin{lemma} \label{lem:GMK3periods}
Let $W = \Gr(2,5) \cap \bP^6 \cap Q$ be an ordinary GM surface. 
Then $\Pic(W)$ does not contain a
rank two lattice with intersection form given by 
\[
\begin{pmatrix} 10  & 1 \\ 1 & 0 \end{pmatrix}
\quad \text{or} \quad 
\begin{pmatrix} 10  & 3 \\ 3 & 0 \end{pmatrix},
\]
with the first basis vector corresponding to the restriction of the Pl\"ucker polarization.
\end{lemma}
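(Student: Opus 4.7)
The plan is to argue by contradiction. Suppose $\Pic(W)$ contains such a rank-two sublattice; equivalently, there is a class $C\in\Pic(W)$ with $C^2=0$ and $H\cdot C\in\{1,3\}$, where $H$ denotes the Pl\"ucker polarization. The idea is to extract from $C$ a smooth elliptic curve $F\subset W$ whose degree $H\cdot F$ in the Pl\"ucker embedding $W\hookrightarrow\bP^6$ is incompatible with $W$ being a GM surface.

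First I would carry out a standard Weyl reduction on the K3 surface $W$: as long as $C$ is not nef, choose an effective $(-2)$-class $R$ with $C\cdot R<0$ and replace $C$ by the reflected class $s_R(C)=C+(C\cdot R)R$. Isotropy is preserved, and $H\cdot C$ strictly decreases, because $H\cdot R>0$ (by ampleness of $H$ and effectivity of $R$) while $C\cdot R<0$; on the other hand the value $H\cdot C$ must remain a positive integer, since the class stays nonzero and in the closure of the positive cone. Hence after finitely many steps we obtain a nef isotropic class $C^*\in\Pic(W)$ with $0<H\cdot C^*\leq H\cdot C$. By Saint-Donat's theorem on K3 surfaces, $C^*=mF$ for some $m\geq 1$ and some primitive class $F$ that is represented by a smooth irreducible elliptic curve (and $|F|$ realizes an elliptic fibration $W\to\bP^1$). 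Since $F$ is a smooth genus-one curve embedded in $\bP^6$, the restriction $H|_F$ is very ample; any very ample line bundle on an elliptic curve has degree at least $3$, so $H\cdot F\geq 3$ and therefore $H\cdot C^*\geq 3$.

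The case $H\cdot C=1$ is now immediate: the inequality $H\cdot C^*\leq 1<3$ gives a contradiction. For $H\cdot C=3$ the bounds force $m=1$ and $H\cdot F=3$, so $F$ is a smooth plane cubic spanning some $\Pi=\bP^2\subset\bP^6$. This remaining case is the main obstacle, and it is where the specific geometry of $W$ as a GM surface enters. The key input is that any quadric hypersurface in $\bP^6$ containing the irreducible cubic $F$ must contain the entire plane $\Pi$, because its restriction to $\Pi$ is a conic and a conic cannot contain an irreducible cubic. Applied to each of the Pl\"ucker quadrics cutting out $\Gr(2,5)$ and to the defining quadric $Q$, this forces $\Pi\subset\Gr(2,5)\cap Q$, and hence $\Pi\subset\Gr(2,5)\cap\bP^6\cap Q=W$. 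Since $W$ is a smooth irreducible K3 surface of dimension two, the inclusion $\Pi\subset W$ implies $W=\Pi=\bP^2$, contradicting the fact that $W$ is a K3.
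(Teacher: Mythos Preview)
Your proof is correct and follows essentially the same route as the paper's. The geometric core---that a very ample restriction to an elliptic curve must have degree at least $3$, and that a plane cubic in $W$ forces its spanning $\bP^2$ to lie inside $W$ because $\Gr(2,5)$ (and $Q$) are cut out by quadrics---is identical to the paper's argument. The only difference is that the paper asserts in one line that the isotropic basis vector is ``the class of a genus $1$ curve,'' whereas you supply the standard Weyl-chamber reduction and invoke Saint-Donat to guarantee that, after reflecting in $(-2)$-curves, the class is a multiple of a smooth elliptic fiber. This makes your version more careful but not substantively different.
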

\begin{proof} This is a special case of \cite[Lemma 2.8]{GreerLiTian:PicardMukai}; since a direct proof is short and easy, we give one here.
In either case, the second basis vector is effective, and thus the class of a genus 1 curve $C$. The Pl\"ucker polarization would restrict to a very ample divisor on $C$ of degree 1 or 3, respectively. This is immediately a contradiction in the first case. In the second case, the Pl\"ucker polarization would embed $C$ as a plane cubic curve. However, as the equations of $\Gr(2,5)$ are quadratic, $W$ would contain the entire $\bP^2$ spanned by $C$, a contradiction.
\end{proof}

\subsection{Proof of Theorem~\ref{main-theorem}} 
\label{section-proof-main-theorem}
Let $Y$ be a quartic double solid and $X$ a GM threefold. 
Assume for sake of contradiction that there is an equivalence $\Ku(Y) \simeq \Ku(X)$. 
Combining Proposition~\ref{proposition-enriques-examples}, Lemma~\ref{lemma-K3-cover-equivalence}, and Theorem~\ref{theorem-K3-cover-examples}, 
we obtain an equivalence $\Db(Y_{\br}) \simeq \Ku(X^{\op})$ which is equivariant 
for the residual $\bZ/2$-actions, where $Y_{\br} \subset \bP^3$ is the branch locus of $Y \to \bP^3$ and $X^{\op}$ is the opposite GM variety. 
This induces a $\bZ/2$-equivariant Hodge isometry  
\begin{equation}
\label{theta}
\theta \colon \tH(Y_{\br}, \bZ) \xrightarrow{\, \sim \,} \tH(\Ku(X^{\op}), \bZ) . 
\end{equation} 
To derive a contradiction, we will use a description of the 
$\bZ/2$-invariants on each lattice. 
The case where $X$ is ordinary so that $X^{\op}$ is a fourfold was 
already addressed in Proposition~\ref{proposition-tH-A12}. 
Note that $Y_{\br}$, as well as $X^{\op}$ when $X$ is special, is a K3 surface, 
so its Mukai lattice up to sign is just the full integral cohomology (Example~\ref{example-tH-K3}). 

\begin{lemma} \label{lem:Z2invariantlattice}
The invariant sublattice $\tH(Y_{\br}, \bZ)^{\bZ/2}$, as well as 
$\tH(X^{\op}, \bZ)^{\bZ/2}$ when $X$ is a special GM threefold, are isomorphic to $A_1^{\oplus 2}$ and given as follows.
\begin{enumerate}
\item \label{item:Yeigenspace} 
$\tH(Y_{\br}, \bZ)^{\bZ/2} = \llangle (1, -A, 1), (1, 0, -1) \rrangle$, where 
$A \in \Pic(Y_{\br})$ is the degree $4$ polarization. 

\item \label{item:K3eigenspace} 
$\tH(X^{\op}, \bZ)^{\bZ/2} = \llangle (1, -B, 4), (2, -B, 2) \rrangle$, 
where $B \in \Pic(X^{\op})$ is the degree $10$ Pl\"{u}cker polarization. 
\end{enumerate}
\end{lemma}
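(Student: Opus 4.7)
My plan is to compute the residual involutions' actions on the Mukai Hodge structures explicitly using the descriptions in Theorem~\ref{theorem-K3-cover-examples}, and reduce each identification of invariants to a rank-$3$ linear algebra calculation.

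The first observation is that both involutions preserve the saturated rank-$3$ sublattice
\[
\Lambda = \bZ\cdot (1, 0, 0) \oplus \bZ\cdot [\text{pol}] \oplus \bZ\cdot (0, 0, 1) \subset \tH,
\]
where $[\text{pol}]$ is $A$ or $B$ (saturation follows from primitivity of the polarization in $\rH^2$, which is itself a direct summand of $\tH$), as well as the orthogonal complement $[\text{pol}]^{\perp} \subset \rH^2$. Each building block of $\Phi_{Y_{\br}}$ and $\Phi_{X^{\op}}$---tensoring with a line bundle, the spherical twist through $v(\cO) = (1,0,1)$, and (for the GM case) the spherical twist through $v(\cU_{X^{\op}})$---acts as the identity on $[\text{pol}]^{\perp}$, since the relevant reflection vector lies in $\Lambda$ and the tensor shift lands in $\Lambda$ on classes orthogonal to $[\text{pol}]$. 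Combined with the shift $[-1]$, which negates Mukai vectors, the residual involution acts as $-1$ on $[\text{pol}]^{\perp}$ and contributes no invariants there. Saturation of $\Lambda$ in $\tH$ then forces the integral invariants to lie in $\Lambda$, reducing the problem to identifying the invariant sublattice of an explicit rank-$3$ integer representation.

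For part~\eqref{item:Yeigenspace}, using $A^2 = 4$, I would write the matrix of $\Phi_{Y_{\br}}$ on $\Lambda$ by composing the tensor action $(r,c,s) \mapsto (r, c+rA, s+ c\cdot A + 2r)$ with the reflection $(r,c,s)\mapsto(-s,c,-r)$ through $v(\cO) = (1,0,1)$. Squaring and negating yields a matrix whose difference with the identity is rank $1$, cut out by the single linear equation $a + 2b + c = 0$ in coordinates $(a,b,c)$ on $\Lambda$. Primitive generators of its kernel are $(1,-A,1)$ and $(1,0,-1)$, and a direct Mukai pairing check gives square $2$ for each and pairing $0$, so the invariant sublattice is $A_1^{\oplus 2}$. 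For part~\eqref{item:K3eigenspace}, the same strategy applies once $v(\cU_{X^{\op}})$ is in hand: combined with $c_1(\cU_{X^{\op}}) = -B$ and $B^2 = 10$, the identity $c_2(\cU|_{X^{\op}}) = 4$---which follows either from Mukai's rigidity of the rank-$2$ Mukai bundle on a genus-$6$ K3 or from a short Schubert computation on $\Gr(2,5)$ using $[X^{\op}] = 2\sigma_1^4$---gives the spherical Mukai vector $v(\cU_{X^{\op}}) = (2, -B, 3)$. Writing out the matrix of $\Phi_{X^{\op}} = \rT_{\cU_{X^{\op}}}\circ\rT_{\cO_{X^{\op}}}\circ(-\otimes\cO_{X^{\op}}(B))$ on $\Lambda$ and negating produces another rank-$1$ difference with the identity, whose kernel is primitively generated by $(1,-B,4)$ and $(2,-B,2)$; these again yield an $A_1^{\oplus 2}$.

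The main obstacle is bookkeeping in part~\eqref{item:K3eigenspace}: fixing the sign contributed by the shift $[-1]$ (which turns the autoequivalences $\Phi_{Y_{\br}}^2[-1]$ and $\Phi_{X^{\op}}[-1]$ into the Mukai-lattice operations $-\Phi_{Y_{\br}}^2$ and $-\Phi_{X^{\op}}$) and correctly computing $v(\cU_{X^{\op}})$ via the Schubert calculation or Mukai's identity. Once these are set up, the final identification of invariant sublattices is purely mechanical integer linear algebra on $\bZ^3$.
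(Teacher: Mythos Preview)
Your proposal is correct and follows essentially the same strategy as the paper: both arguments show the residual involution acts as $-1$ on the orthogonal complement of the rank-$3$ sublattice $\Lambda$ spanned by $(1,0,0)$, the polarization, and $(0,0,1)$, and then identify the invariants inside $\Lambda$. The only minor difference is presentational: the paper verifies directly that the two named vectors are fixed (computing e.g.\ $(\Phi_{Y_{\br}})_*(1,-A,1)=(1,0,-1)$ and $(\Phi_{Y_{\br}})_*(1,0,-1)=-(1,-A,1)$, and similarly in the GM case using $v(\cU_{X^{\op}})=(2,-B,3)$) and then argues by rank that nothing more can be invariant, whereas you write down the full $3\times 3$ matrix and read off the kernel of $(\text{involution}-\mathrm{id})$ --- but this is the same computation packaged differently.
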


\begin{proof}
Recall that a spherical twist $\rT_\cE$ acts on cohomology by the reflection $\rho_{v(\cE)}$, defined by
\begin{equation} \label{eq:rhovE}
\rho_{v(\cE)} (v) = v + v(\cE) \cdot \bigl(v(\cE), v\bigr),
\end{equation}
while tensoring with a line bundle $\cL$ acts by multiplication with $\ch(\cL)$.
In case \eqref{item:Yeigenspace}, this shows
\begin{equation} \label{eq:PhiYbraction}
(\Phi_{Y_{\br}})_*  (1, -A, 1)  = (1, 0, -1) \quad \text{and} \quad
(\Phi_{Y_{\br}})_* (1, 0, -1)  = -(1, -A, 1).
\end{equation}
Hence the two vectors are preserved by the action of $\Phi_{Y_{\br}}^2[-1]$, i.e.~by Theorem~\ref{theorem-K3-cover-examples}\eqref{quartic-involution} they are $\bZ/2$-invariant. In case \eqref{item:K3eigenspace}, we instead apply Theorem~\ref{theorem-K3-cover-examples}\eqref{K3-cover-GM}; using $v(\cU_{X^\op}) = (2, -B, 3)$ it is a straightforward computation that both classes are invariant under $\Phi_{X^\op}[-1]$. One also sees immediately that $\bZ/2$ acts by $-1$ on the orthogonal complements of the sublattices 
\begin{align} 
\label{LYbr}
L_{Y_{\br}} & = \rH^0(Y_{\br}, \bZ) \oplus A \bZ \oplus \rH^4(Y_{\br}, \bZ) \subset \tH(Y_{\br}, \bZ)  ,  \\ 
\label{LXop}
L_{X^{\op}} & = \rH^0(X^{\op}, \bZ) \oplus B \bZ \oplus \rH^4(X^{\op}, \bZ) \subset \tH(X^{\op}, \bZ). 
\end{align} 
As $\bZ/2$ evidently does not act as the identity on these rank $3$ sublattices, this leaves the claimed rank $2$ lattices as the only possibility for the invariants. 
\end{proof}

Now we can finish the proof of Theorem~\ref{main-theorem}. 
We break the proof into two cases, depending on whether $X$ is ordinary or special. 

\begin{proof}[Assume $X$ is ordinary]
The primitive sublattice $L_{Y_{\br}} \subset \tH(Y_{\br}, \bZ)$ from~\eqref{LYbr} has  signature $(2,1)$, contains 
the $\bZ/2$-invariant sublattice, 
consists of Hodge classes, and has discriminant $-4$. 
Therefore, its image $K' \subset \tH(\Ku(X^{\op}), \bZ)$ under the $\bZ/2$-equivariant 
Hodge isometry~\eqref{theta} has the same properties. 
By Proposition~\ref{proposition-tH-A12} the $\bZ/2$-invariants of $\tH(\Ku(X^{\op}), \bZ)$ is the canonical $A_1^{\oplus 2}$ sublattice. 
So by Proposition~\ref{proposition-tH}, $K'$ corresponds to a rank $3$ primitive positive definite sublattice $K \subset \rH^4(X^{\op}, \bZ)$ which contains $\rH^4(\Gr(2,5),\bZ)$, consists of Hodge classes, and has discriminant~$4$. 
This means the period of $X^{\op}$ lies in the divisor $\cD_{4}$, which contradicts Theorem \ref{theorem-image-period-morphism}.
\end{proof}

\medskip \noindent 
\emph{Assume $X$ is special.} 
We give two proofs. 
The first is shorter but relies on a hard result from \cite{categorical-cones} to reduce to the ordinary case, while the second uses only the easy Lemma~\ref{lem:GMK3periods} and is the starting point for our proof of Theorem \ref{main-theorem-2} in \S\ref{section-deformation-equivalence}. 

\begin{proof}[Proof 1]
The description of generalized partners from \cite[Lemma 3.8]{GM-derived} shows that there exists an ordinary GM threefold $X'$ which is a generalized partner of $X$. 
Then by \cite[Theorem 1.6]{categorical-cones} we have an equivalence $\Ku(X) \simeq \Ku(X')$, so we are reduced to the ordinary case handled above. 
\end{proof} 

\begin{proof}[Proof 2] 
The rank $3$ lattices $L_{Y_{\br}}$ and $L_{X^{\op}}$ from~\eqref{LYbr} and~\eqref{LXop} are given explicitly by 
\[
\begin{pmatrix}
2 & 0 & -1 \\
0 & 2  & -1 \\
-1 & -1 & 0
\end{pmatrix}
\quad \text{and} \quad
\begin{pmatrix}
2 & 0 & -1 \\
0 & 2  & -2 \\
-1 & -2 & 0
\end{pmatrix},
\]
where the first two basis vectors are the $\bZ/2$-invariant ones from Lemma \ref{lem:Z2invariantlattice}, and the third is the class of a point. 
As the isometry $\theta$ in~\eqref{theta} is $\bZ/2$-equivariant, 
it sends the basis of the invariant lattice $A_1^{\oplus 2} \subset L_{Y_{\br}}$ to the basis of $A_1^{\oplus 2} \subset L_{X^{\op}}$, up to sign and permutation. By \eqref{eq:PhiYbraction}, one can  precompose with an appropriate power of  $\Phi_{Y_{\br}}$ to ensure it sends the first vector to the first vector. As $\theta$ respects the standard orientation of positive definite $2$-planes by \cite[Corollary 4.10]{HMS:Orientation}, it  also sends the second basis vector to the second basis vector.

Thus, the rank $4$ lattice in $\tH(X^{\op},\bZ)$ generated by $\theta(L_{Y_{\br}})$ and $L_{X^{\op}}$ has intersection form 
\[ \begin{pmatrix}
2 & 0 & -1 & -1 \\
0 & 2 & -2 & -1 \\
-1 & -2 & 0 & -r \\
-1 & -1 & -r & 0
\end{pmatrix}
\]
for some $r \in \bZ$. The discriminant of this lattice, which has to be positive, is
$-4r^2 - 12 r + 1$, and hence the only possibilities are $r = -3, -2, -1, 0$. The fourth basis vector is a class of the form $(r, D, s)$ with 
\[ D^2 = 2rs, \quad -s - D.B - 4r = -1 \quad -2s - D.B -2r = -1. 
\]
Solving for $s$ shows that $B$ and $D$ span a lattice with intersection form
\[ \begin{pmatrix}
10 & 1-6r \\
1-6r & 4r^2 
\end{pmatrix}.
\]
For $r = 0$ this immediately contradicts Lemma \ref{lem:GMK3periods}. For $r = -1$, we get
$(B-D)^2 = 0$ and $B.(B-D) = 3$, contradicting the second case of Lemma \ref{lem:GMK3periods}. Similarly, for $r = -2$ we have $(D-B)^2 = 0$ and $B.(D-B) = 3$, and for $r = -3$ we have $(2B-D)^2 = 0$  and $B.(2B-D) = 1$, in all cases contradicting  Lemma \ref{lem:GMK3periods}.
\end{proof}

\begin{remark} By \cite[Proposition 5.23]{IHC-CY2} the Kuznetsov component $\Ku(Y)$ of a quartic double solid determines its intermediate Jacobian $J(Y)$, and 
by the classical Torelli theorem proved in \cite{Debarre:doublequarticTorelli}, $J(Y)$ determines $Y$. As a corollary, categorical Torelli holds for quartic double solids: $\Ku(Y)$ determines $Y$.

The methods used in the second proof for the case where $X$ is special above also lead to a short direct proof of this categorical Torelli statement, which we sketch briefly. Given an equivalence $\Ku(Y_1) \simeq \Ku(Y_2)$, one considers the associated $\bZ/2$-equivariant Hodge isometry 
\[ \theta\colon \tH\left(Y_{1, \br}, \bZ \right) \xrightarrow{\, \sim \,}  \tH\left(Y_{2, \br}, \bZ \right).\]
If $\theta $ sends the class of a point to a class in the lattice $L_{Y_2, \br}$, then up to composition with the involution induced by $\Phi_{Y_2, \br}$ it sends it to the class of a point; in this case, an easy argument shows that $\theta$ induces a Hodge isometry $\rH^2(Y_{1, \br}, \bZ) \xrightarrow{\, \sim \,} \rH^2(Y_{2, \br}, \bZ)$ preserving the polarizations. By the Torelli theorem for K3 surfaces there is a polarized isomorphism $Y_{1, \br} \to Y_{2, \br}$, and thus an isomorphism $Y_1 \cong Y_2$. Otherwise, one considers again the rank $4$ lattice spanned by $L_{Y_2, \br}$ and the image of the class of a point; a computation exactly as in the proof above shows that $\rH^2({Y_{2, \br}}, \bZ)$ contains a square zero class of degree $2$, which cannot exist on a quartic K3.
\end{remark}

%%%%%%%%%%%%%%%%%%%%%%%%%%%%%%%%%%%%%%%%%%%%%%%%%%%%%%

\section{Deformation equivalence} 
\label{section-deformation-equivalence}

We continue to work over the complex numbers. 
In this section, we prove  Theorem \ref{main-theorem-2}, which says that Kuznetsov components of quartic double solids and of GM threefolds are deformation equivalent. The idea is based on the second proof of Theorem~\ref{main-theorem} in the case where the GM threefold is special. Namely, in the case $r = -1$ we used the fact that there is no GM K3 surface with Picard lattice $\begin{pmatrix} 10 & 3 \\ 3 & 0 \end{pmatrix}$. However, there is such a polarized K3 surface $S$ --- indeed, as we will see it is given by a very general quartic containing a line. 

This directly suggest our strategy: we extend the $\bZ/2$-action on GM K3 surfaces, given by Theorem \ref{theorem-K3-cover-examples}\eqref{GM-involution}, to one acting also on $S$, in such a way that it becomes conjugate with the $\bZ/2$-action on $S$ as a quartic K3 surface given by Theorem \ref{theorem-K3-cover-examples}\eqref{quartic-involution}. 
To construct this as an enhanced $\bZ/2$-action in a family, we apply the general results from  \S\ref{section-group-actions-cats}. Taking the associated $\bZ/2$-invariant category then proves Theorem~\ref{main-theorem-2}.

\subsection{Quartics containing a line}

From now on, let $S \subset \bP^3$ be a quartic K3 surface that is very general among quartics containing a line $L \subset S$; in particular, $S$ has Picard rank two.  The projection from the line induces an elliptic fibration $S \to \bP^1$; let $E$ be the class of a fiber, a plane cubic.  
The Picard lattice of $S$ is given by $\begin{pmatrix} 4 & 3 \\ 3 & 0 \end{pmatrix}$, with respect to the basis given by $D$, the class of the hyperplane section, and $E$. Both $E$ and the class $L = D-E$ of the line are classes of curves that can be contracted, and so generate extremal rays of the Mori cone. A simple computation then confirms the following result.
 
\begin{lemma}
The Mori cone of $S$ is given by $\langle E, D-E\rangle$, and the nef cone by 
$\langle E, 3D - E \rangle$. 
\end{lemma}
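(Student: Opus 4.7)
Since $S$ has Picard rank two, the Mori cone is closed, polyhedral, and generated by (at most) two extremal rays. I will exhibit two distinct contractible classes, which must therefore be the extremal generators, and then compute the nef cone as the dual.

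First I would verify that $E$ and $L = D - E$ each generate an extremal ray. Using the intersection form $D^2 = 4$, $D \cdot E = 3$, $E^2 = 0$, a direct computation gives $L^2 = -2$, so $L$ is a smooth rational $(-2)$-curve, and by adjunction its ray is contracted by the small contraction to the node. For $E$, we already know from the construction that the projection from $L$ realizes $S \to \bP^1$ as an elliptic fibration with $E$ as the fiber class; since $E^2 = 0$ and $E$ moves in a base-point-free pencil, the ray $\bR_{\geq 0} \cdot E$ is extremal and contracted by this fibration. The classes $E$ and $D-E$ are linearly independent in $\Pic(S) \otimes \bR$, so these are the two extremal rays of the Mori cone, giving $\overline{\mathrm{NE}}(S) = \langle E, D - E \rangle$.

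For the nef cone, I would simply dualize with respect to the intersection form. Writing a class as $\alpha D + \beta E$, the conditions $(\alpha D + \beta E) \cdot E \geq 0$ and $(\alpha D + \beta E) \cdot (D - E) \geq 0$ translate into $3 \alpha \geq 0$ and $\alpha + 3 \beta \geq 0$. The extremal rays of this cone are obtained by saturating one inequality at a time: $\alpha = 0$ gives the ray spanned by $E$, and $\alpha + 3 \beta = 0$ gives the ray spanned by $3D - E$. Hence the nef cone is $\langle E, 3D - E \rangle$.

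The only nontrivial input is the fact that $E$ and $L$ are genuine effective curve classes on $S$ with the expected geometric interpretations, which follows from the hypothesis that $S$ is very general among quartics containing the line $L$ (so in particular $\Pic(S)$ is spanned by $D$ and $E$ as stated). Everything else is a routine intersection-theoretic computation on a Picard rank two surface.
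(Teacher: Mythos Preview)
Your proof is correct and follows the same approach as the paper, which simply notes that $E$ and $L=D-E$ are both classes of contractible curves and hence extremal, leaving the dualization to the reader; you have filled in these details. One terminological slip: contracting the $(-2)$-curve $L$ on a surface is a divisorial contraction (to an $A_1$ singularity), not a ``small contraction'', but this does not affect the argument.
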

In particular, $H := D +E$ is a polarization of degree 10. Moreover, with respect to the basis $H, E$, the intersection matrix becomes $\begin{pmatrix} 10 & 3 \\ 3 & 0 \end{pmatrix}$, one of the possibilities we had to exclude for GM K3s in Lemma \ref{lem:GMK3periods} for the second proof of Theorem~\ref{main-theorem}. As the general degree 10 K3 surface is obtained as a GM surface \cite{Mukai:newdevelopment}, $S$ is a degeneration of GM K3 surfaces.

\subsection{Stable rank $2$ bundles on $S$}
In our arguments, we will sometimes have to show that rank $2$ bundles on $S$ that we  construct are stable. In all cases, this will follow from the following classical lemma due to Mukai.

\begin{lemma}[{\cite[Corollary 2.8]{Mukai:BundlesK3}}] \label{lemma:Mukai}
Let $0\to \cF \to \cE \to \cG \to 0$ be a short exact sequence of sheaves on a K3 surface with $\Hom(\cF, \cG) = 0$. Then
\[\dim \Ext^1(\cF, \cF) + \dim \Ext^1(\cG, \cG) \le \dim \Ext^1(\cE, \cE).
\]
\end{lemma}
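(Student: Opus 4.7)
The strategy is to derive a four-term exact sequence computing $\Hom(\cE, \cE)$ in terms of $\Hom$s between $\cF$ and $\cG$, and then feed it into the Serre-duality relations available on a K3 surface to convert the resulting $\Hom$-inequality into the desired $\Ext^1$-inequality. Throughout I write $h^i(A, B) = \dim \Ext^i(A, B)$.

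The key observation is that the hypothesis $\Hom(\cF, \cG) = 0$ forces every endomorphism $\phi \colon \cE \to \cE$ to preserve the subsheaf $\cF$, since the composition $\cF \hookrightarrow \cE \xrightarrow{\phi} \cE \twoheadrightarrow \cG$ vanishes. This yields a well-defined ``restriction'' map
\[
r \colon \Hom(\cE, \cE) \to \Hom(\cF, \cF) \oplus \Hom(\cG, \cG), \quad \phi \mapsto (\phi|_{\cF}, \bar\phi),
\]
whose kernel is $\Hom(\cG, \cF)$ (those endomorphisms killed by $r$ are precisely the compositions $\cE \twoheadrightarrow \cG \to \cF \hookrightarrow \cE$). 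If $\xi \in \Ext^1(\cG, \cF)$ denotes the extension class of the given short exact sequence, a standard diagram chase identifies the image of $r$ with the kernel of
\[
d \colon \Hom(\cF, \cF) \oplus \Hom(\cG, \cG) \to \Ext^1(\cG, \cF), \quad (\alpha, \beta) \mapsto \alpha \cdot \xi - \xi \cdot \beta.
\]
I thereby obtain an exact sequence
\[
0 \to \Hom(\cG, \cF) \to \Hom(\cE, \cE) \xrightarrow{r} \Hom(\cF, \cF) \oplus \Hom(\cG, \cG) \xrightarrow{d} \Ext^1(\cG, \cF).
\]
Counting dimensions and using $\dim \operatorname{image}(d) \le h^1(\cG, \cF)$ yields
\[
h^0(\cE, \cE) \ge h^0(\cF, \cF) + h^0(\cG, \cG) + h^0(\cG, \cF) - h^1(\cG, \cF).
\]

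Now I invoke that the surface is a K3: the triviality of the canonical bundle gives Serre duality $\Ext^i(A, B) \cong \Ext^{2-i}(B, A)^{\svee}$, hence $h^2(A, A) = h^0(A, A)$ and the symmetry $\chi(A, B) = \chi(B, A)$; in particular $h^1(A, A) = 2 h^0(A, A) - \chi(A, A)$, $h^1(\cG, \cF) = h^1(\cF, \cG)$, and $h^2(\cF, \cG) = h^0(\cG, \cF)$. Additivity along the short exact sequence gives $\chi(\cE, \cE) = \chi(\cF, \cF) + \chi(\cG, \cG) + 2 \chi(\cF, \cG)$, and the hypothesis $\Hom(\cF, \cG) = 0$ combined with Serre duality gives $\chi(\cF, \cG) = h^0(\cG, \cF) - h^1(\cF, \cG)$. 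Substituting these identities into the displayed inequality transforms it exactly into $h^1(\cE, \cE) \ge h^1(\cF, \cF) + h^1(\cG, \cG)$, completing the proof.

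The only nontrivial step is setting up the four-term exact sequence above, and in particular identifying $\operatorname{image}(r)$ with $\ker(d)$ via the compatibility $\alpha \xi = \xi \beta$; everything else is mechanical Euler-characteristic bookkeeping using Serre duality.
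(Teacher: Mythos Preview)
The paper does not give its own proof of this lemma; it simply cites Mukai's original result \cite[Corollary 2.8]{Mukai:BundlesK3}. Your argument is correct and is essentially the standard proof (and Mukai's): the four-term sequence you write down is exactly the relevant piece of the long exact sequence/spectral sequence computing $\Hom(\cE,\cE)$ from the filtration, and the conversion via Serre duality and the Euler-characteristic bilinearity is the routine endgame. Nothing to add.
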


Recall from Example~\ref{example-tH-K3} the Mukai Hodge structure $\tH(S, \bZ)$ associated to the K3 surface~$S$. The Mukai vector of an object $\cE \in \Db(S)$ is defined by 
\[ v(\cE) = \ch(\cE) \cdot \sqrt{\td(S)} \in  \tH^{1,1}(S, \bZ) = \rH^0(S, \bZ) \oplus \rH^{1, 1}(S, \bZ) \oplus \rH^4(S, \bZ) . 
\]
The Mukai pairing is defined by
$$\bigl((r, c, s), (r', c', s') \bigr) = cc' - rs' -r's$$ 
and satisfies $- \chi(\cE, \cF) = \left( v(\cE), v(\cF)\right)$. 
Given a polarization $A$, we define $\mu_A$-(semi)stability via the slope function
\[ \mu_A(\cE) = \frac{A.c_1(\cE)}{A^2 \rk(\cE)}.\]

\begin{proposition}\label{prop:stablerk2bundles}
Let $S$ be a very general quartic K3 surface containing a line, and let $A$ be a polarization on $S$. 
\begin{enumerate}
    \item \label{item:sphericalstable}
    Let $\cV$ be a rank $2$ spherical vector bundle on $S$. Then $\cV$ is $\mu_A$-stable. 
    \item \label{item:semirigidstable}
    Let $\cV$ be a rank $2$ vector bundle on $S$ such that $\Hom(\cV, \cV) = \bC$ and $\Ext^1(\cV, \cV) = \bC^2$. Assume that $c_1(\cV) - D$ is divisible by $2$. 
    Then $\cV$ is $\mu_A$-stable  unless it is destabilized by $\cO(B)$ for
    \[ 
    B = \frac{c_1(\cV) \pm (D-2E)}2. 
    \]
    If $A.(D - 2E) \neq 0$ (e.g. if $A = D$) and $\cV$ is unstable, then more precisely the destabilizing object is $\cO(B)$ where the sign is chosen such that $\mu_A(\cO(B)) > \mu_A(c_1(\cV))$; 
    moreover, in this case 
    the spherical twist $\rT_{\cO(B)}\cV$ is a $\mu_A$-stable vector bundle of the same Mukai vector. 
\end{enumerate}
\end{proposition}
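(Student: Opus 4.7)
The plan is to reduce both parts to arithmetic constraints on the Picard lattice $\Pic(S) \simeq \begin{pmatrix} 4 & 3 \\ 3 & 0 \end{pmatrix}$ via Mukai's Lemma~\ref{lemma:Mukai}. Suppose for contradiction that $\cV$ is not $\mu_A$-stable, and take a saturated rank $1$ subsheaf $\cF = \cO(C) \subset \cV$ with $\mu_A(\cF) \ge \mu_A(\cV)$; the quotient has the form $\cG = \cO(C') \otimes I_Z$ for some $C' \in \Pic(S)$ and some $0$-dimensional subscheme $Z$. A direct Mukai-vector computation yields
\begin{equation} \label{eq:Mukaivecformula}
v(\cV)^2 = -(C-C')^2 - 8 + 4\ell(Z),
\end{equation}
and writing $C - C' = xD + yE$ will translate this into a diophantine equation in $(x,y)$. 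Whenever $\mu_A(\cF) > \mu_A(\cG)$, or the slopes are equal with $\Hom(\cF, \cG) = 0$, Mukai's Lemma combined with $\Ext^1(\cO(C), \cO(C)) = 0$ gives $2\ell(Z) = \dim \Ext^1(\cG, \cG) \le \dim \Ext^1(\cV, \cV)$. In the remaining case $\Hom(\cF, \cG) \neq 0$, ampleness of $A$ forces $C = C'$ and $Z = \emptyset$, so \eqref{eq:Mukaivecformula} yields $v(\cV)^2 = -8$, contradicting both the spherical and the semi-rigid hypotheses.

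For part \eqref{item:sphericalstable}, $\Ext^1(\cV, \cV) = 0$ forces $\ell(Z) = 0$, reducing \eqref{eq:Mukaivecformula} to $(C-C')^2 = -6$, i.e.~$x(2x + 3y) = -3$, which has no integer solutions. For part \eqref{item:semirigidstable}, $\dim \Ext^1(\cV, \cV) = 2$ gives $\ell(Z) \le 1$; the case $\ell(Z) = 1$ leads to $(C-C')^2 = -4$, i.e.~$x(2x+3y) = -2$, which again has no integer solutions, so $\cG$ is a line bundle and $(C-C')^2 = -8$. The equation $x(2x+3y) = -4$ has integer solutions $(x,y) \in \{(\pm 1, \mp 2), (\pm 2, \mp 2), (\pm 4, \mp 3)\}$, but the parity hypothesis $c_1(\cV) - D \in 2\Pic(S)$ forces $x$ odd and $y$ even, isolating $C - C' = \pm(D-2E)$ and hence $B = C = (c_1(\cV) \pm (D-2E))/2$. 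When $A \cdot (D-2E) \neq 0$, the strict inequality $\mu_A(\cO(B)) > \mu_A(\cV)$ becomes $\pm A \cdot (D-2E) > 0$, uniquely determining the sign.

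For the final spherical twist claim, $\cO(B)$ is automatically spherical as a line bundle on a K3, and the identity $c_1(\cV)^2 = 4s$ forced by $v(\cV)^2 = 0$ (which is also what ensures $B$ is integral) yields $\langle v(\cO(B)), v(\cV)\rangle = 0$, so $\rT_{\cO(B)}\cV$ has the same Mukai vector as $\cV$. To see that $\rT_{\cO(B)}\cV$ is a genuine sheaf rather than a complex, I will analyze its defining triangle using the cohomology of $\cV(-B)$; the crucial input is that neither $\pm(D-2E)$ is effective on $S$, which follows from the Mori cone description $\langle E, D - E\rangle$. Stability of $\rT_{\cO(B)}\cV$ will then follow from the classification above together with a second application of Mukai's Lemma ruling out the only other potential destabilizer $\cO(B \mp (D-2E))$. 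This last verification --- that the spherical twist genuinely produces a $\mu_A$-stable vector bundle --- will be the main technical obstacle and requires careful cohomology computations for line bundles of square $-8$ on the very general quartic K3 containing a line.
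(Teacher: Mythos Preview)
Your argument for the main classification in parts \eqref{item:sphericalstable} and \eqref{item:semirigidstable} is correct and essentially identical to the paper's: set up the destabilizing sequence, apply Mukai's Lemma to bound $\ell(Z)$, and solve the resulting diophantine equation $x(2x+3y) = c$ in the Picard lattice. Your case analysis for when Mukai's Lemma applies (separating out $\Hom(\cF,\cG)\neq 0$) is slightly more explicit than the paper's but equivalent.

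The gap is in your treatment of the spherical twist claim. Your plan is to compute $H^\bullet(\cV(-B))$ from the twisted destabilizing sequence $0 \to \cO \to \cV(-B) \to \cO(\mp(D-2E)) \to 0$ using that $\pm(D-2E)$ is not effective. That gives $H^0=\bC$, but it does not determine $H^1$ and $H^2$: the connecting map $H^1(\cO(\mp(D-2E))) \cong \bC^2 \to H^2(\cO_S)\cong \bC$ is not controlled by non-effectiveness alone. The missing input is $\Ext^2(\cO(B),\cV)=\Hom(\cV,\cO(B))^\vee=0$, which follows from simplicity of $\cV$ together with $H^0(\cO(\pm(D-2E)))=0$. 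Once you have $\Hom^\bullet(\cO(B),\cV)=\bC\oplus\bC[-1]$, the twist fits into a four-term exact sequence
\[
0 \to \cO(B) \to \cV \to \rT_{\cO(B)}\cV \to \cO(B) \to 0,
\]
so $\rT_{\cO(B)}\cV$ is a rank $2$ bundle and automatically satisfies $\Hom(\cO(B),\rT_{\cO(B)}\cV)=0$ (e.g.\ because $\rT_{\cO(B)}^{-1}\cO(B)\simeq\cO(B)[1]$).

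Your proposed ``second application of Mukai's Lemma ruling out the other potential destabilizer $\cO(B\mp(D-2E))$'' is unnecessary and slightly confused: under the hypothesis $A\cdot(D-2E)\neq 0$ the sign of the destabilizer is already uniquely determined by the slope inequality, so after the twist the only candidate destabilizer of $\rT_{\cO(B)}\cV$ (which has the same Mukai vector) is again $\cO(B)$, and this is excluded by the vanishing $\Hom(\cO(B),\rT_{\cO(B)}\cV)=0$ just noted. This is exactly how the paper closes the argument.
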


\begin{proof}
If $\cV$ is not stable, then there is a short exact sequence 
\[ 0 \to \cL_1 \to \cV \to \cL_2 \otimes I_Z \to 0
\]
where $\cL_1, \cL_2$ are line bundles with $\mu_A(\cL_1) \ge \mu_A(\cL_2)$, and $Z$ is a zero-dimensional subscheme of $S$. Since $\Hom(\cV, \cV) = \bC$ we have $\cL_1 \neq \cL_2$, and hence $\Hom(\cL_1, \cL_2) = 0$.  Mukai's Lemma, Lemma \ref{lemma:Mukai}, shows that $Z$ is empty in case \eqref{item:sphericalstable}, and that $Z$ is either empty or a single point in case \eqref{item:semirigidstable}. We write $C_i = c_1(L_i)$ for $i = 1, 2$, and so $v(\cL_i) = \bigl(1, C_i, \frac 12 C_i^2 + 1\bigr)$. 

From 
\[ -2 = v(\cL_1)^2,  \quad -2 + 2\cdot \mathrm{length}(Z) = v(\cL_2 \otimes I_Z)^2 \quad \text{and} \quad  v(\cV)^2 = \bigl(v(\cL_1) + v(\cL_2 \otimes I_Z)\bigr)^2
\]
we obtain
\begin{align} 4 + v(\cV)^2 - 2\cdot \mathrm{length}(Z) &= 2\bigl(v(\cL_1),  v(\cL_2 \otimes I_Z)\bigr) = 2 C_1 C_2 - C_1^2 - 2 - C_2^2 -2 + 2\cdot\mathrm{length}(Z)
\label{eq:C1-C2} \nonumber \\ 
(C_1 - C_2)^2 & = -8 + 4 \cdot \mathrm{length}(Z) - v(\cV)^2. 
\end{align}
We write $C_1 - C_2 = dD + eE$, which gives $(C_1 - C_2)^2 = 4d^2 + 6de$.

In case \eqref{item:sphericalstable} we have $v(\cV)^2 = -2$ and $\mathrm{length}(Z) = 0$, and thus \eqref{eq:C1-C2} becomes $4d^2 + 6de = -6$. Hence $d$ is divisible by 3. But then $4d^2 + 6de$ is divisible by $9$ while $-6$ is not, a contradiction. 
%which leads to a contradiction modulo 9.

Now consider case \eqref{item:semirigidstable}, where $v(\cV)^2 = 0$. When $Z$ is a point, we get $ 4d^2 + 6de = -4$, a contradiction modulo 3.
Thus $Z$ has to be empty, in which case \eqref{eq:C1-C2} is equivalent to $4d^2 + 6de = -8$, i.e.~$d(2d+3e) = -4$. 
 The possibilities $d = \pm 1, \pm2, \pm 4$ lead to
\[
C_1 - C_2 = \pm (D- 2E), \pm (2D-2E), \pm (4D-3E),
\]
respectively. Since we assume that $c_1(\cV)-D$ is divisible by two, the first pair of solutions is the only one with $C_1$ and $C_2$ integral. This proves the first part of the claim. 

Finally, if $\cV$ is not stable, we have seen that its Harder-Narasimhan filtration is
\[ 0 \to \cO(B) \to \cV \to \cO(c_1(\cV)-B) \to 0 \]
with $\Hom(\cO(B), \cO(c_1(\cV)-B)) = 0$ and hence $\Hom(\cO(B), \cV) = \bC$.
Since $\cV$ is simple, $\Ext^2(\cO(B), \cV) = \Hom(\cV, \cO(B))^\vee = 0$. Using \eqref{eq:C1-C2} once more we obtain 
\[ \chi(\cO(B), \cV) = - v(\cO(B))^2 - \bigl(v(\cO(B)), v(\cO(c_1(\cV)-B))\bigr) = 2 - 2 = 0.
\]
Thus
$\Hom^\bullet(\cO(B), \cV) = \bC \oplus \bC[-1]$, which gives a $4$-term short exact sequence
\[
0 \to \cO(B) \to \cV \to \rT_{\cO(B)} \cV \to \cO(B) \to 0.
\]
This shows that $\rT_{\cO(B)} \cV $ is a rank $2$ vector bundle with $\Hom(\cO(B), \rT_{\cO(B)} \cV ) = 0$. Since $\rT_{\cO(B)}$ is an equivalence, it is also a simple vector bundle with $\Ext^1(\rT_{\cO(B)} \cV , \rT_{\cO(B)} \cV ) = \bC^2$. 
Applying the previous results of the proposition shows that  $\rT_{\cO(B)} \cV$ is stable as claimed. 
\end{proof}

Let $\cU$ be the $\mu_D$-stable spherical vector bundle  with $v(\cU) = (2, -D-E, 3)$, whose existence and uniqueness were first proved in \cite[Theorem 2.1]{Kuleshov-existence} and \cite[Corollary 3.5]{Mukai:BundlesK3}, respectively.
Then by Proposition~\ref{prop:stablerk2bundles}, $\cU$ is stable with respect to any polarization.  Moreover: 

\begin{lemma} \label{lem:Ufiberstable}
Let $\cU$ be the slope-stable spherical vector bundle with $v(\cU) = (2, -D-E, 3)$. Then the restriction of $\cU$ to any fiber of the elliptic fibration induced by $E$ is stable.
\end{lemma}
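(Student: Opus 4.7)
The strategy is to describe $\cU(D)^\vee$ explicitly as a Lazarsfeld--Mukai bundle on $S$. Since $c_1(\cU(D)) = -D - E + 2D = L$, a direct Chern class computation gives $v(\cU(D)^\vee) = (2, -L, 0)$. The Lazarsfeld--Mukai construction applied to $(L, \cO_L(1))$, using a basis $s_0, s_1 \in H^0(L, \cO_L(1))$, produces a rank-$2$ locally free sheaf $\cE$ fitting in
\[
0 \to \cE \to \cO_S^{\oplus 2} \to i_{L*} \cO_L(1) \to 0,
\]
where $i_L \colon L \hookrightarrow S$ denotes the inclusion. This $\cE$ has Mukai vector $(2, -L, 0)$, is locally free (since $\cO_L(1)$ is globally generated), and is a stable spherical bundle by Proposition~\ref{prop:stablerk2bundles}\eqref{item:sphericalstable}. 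By the uniqueness, up to isomorphism, of the stable spherical bundle with prescribed Mukai vector on a K3 surface (which follows from the standard spherical cohomology computation combined with stability), we conclude $\cU(D)^\vee \cong \cE$, and the displayed short exact sequence presents $\cU(D)^\vee$.

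For any fiber $F \in |E|$, since $L$ is not a component of $F$, multiplication by a local equation of $F$ is injective on $i_{L*} \cO_L(1)$, yielding $\mathrm{Tor}_1^{\cO_S}(i_{L*} \cO_L(1), \cO_F) = 0$. Restricting the resolution to $F$ gives
\[
0 \to \cU(D)^\vee|_F \to \cO_F^{\oplus 2} \to \cO_{L \cap F}(1) \to 0,
\]
with $L \cap F$ a length-$3$ subscheme of $F$ since $L \cdot E = 3$. Stability of $\cU|_F$ is equivalent to stability of $\cU(D)^\vee|_F$ by twisting with a line bundle and dualizing, so it suffices to prove the latter.

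Assume for contradiction that $\cU(D)^\vee|_F$ is unstable, with $F$ a smooth fiber. Since $\gcd(2,3) = 1$, Atiyah's classification forces a destabilizing sub-line-bundle $M \subset \cU(D)^\vee|_F$ with $\deg M \geq -1$. The composition $M \hookrightarrow \cO_F^{\oplus 2}$ yields two elements $\sigma_0, \sigma_1 \in H^0(F, M^\vee)$, while the further composition $M \to \cO_F^{\oplus 2} \to \cO_{L \cap F}(1)$ vanishes. Since $\deg M^\vee \leq 1$, we have $\dim H^0(F, M^\vee) \leq 1$, so $(\sigma_0, \sigma_1) = (\lambda_0, \lambda_1) \sigma$ for a single section $\sigma$ and a nonzero pair $(\lambda_0, \lambda_1)$. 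At each of the three points $p$ of $L \cap F$, the vanishing condition reads $\sigma(p) \cdot \bigl(\lambda_0 s_0(p) + \lambda_1 s_1(p)\bigr) = 0$. But $\sigma$ has at most one zero on $F$, forcing $\lambda_0 s_0 + \lambda_1 s_1 \in H^0(L, \cO_L(1))$ to vanish at two or more points of $L \cap F$, contradicting $\deg \cO_L(1) = 1$.

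The main obstacle is adapting the argument to the finitely many singular (nodal) fibers of the elliptic fibration, and to fibers for which $L \cap F$ is non-reduced. For very general $S$, $L \cap F$ remains in the smooth locus of every fiber $F$, and the argument extends to pure rank-$1$ subsheaves of $\cU(D)^\vee|_F$ on the integral curve $F$ by the same Euler characteristic bound $\chi(M) \geq -1$ restricting the zeros of a section of $\cHom_{\cO_F}(M, \cO_F)$. Alternatively, one can reduce stability on a special fiber to stability on nearby smooth fibers via semicontinuity, invoking that $\gcd(\mathrm{rank}, \deg) = 1$ rules out proper semistability.
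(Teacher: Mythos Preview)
Your Lazarsfeld--Mukai description of $\cU(D)^\vee$ and the resulting section-counting argument on smooth fibers is correct and pleasantly explicit; it is a genuinely different route from the paper's. The paper instead argues uniformly for all fibers via a Bogomolov-type inequality following Langer: from Proposition~\ref{prop:stablerk2bundles} one deduces that $\cU$ is $\mu_A$-stable for $A = E + \epsilon D$, hence $E$-stable (as $E.c_1(\cU) = -3$ is odd); then if $\cU|_C$ were unstable for some fiber $C$, one takes a destabilizing quotient $\cU|_C \twoheadrightarrow L_C$ of degree $d \le -2$, forms the elementary modification $\cK = \ker(\cU \to L_C)$, observes $\cK$ is again $E$-stable, and computes $v(\cK)^2 = (D+2E)^2 - 4(3-d) \le -4$, a contradiction. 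This requires no explicit presentation of $\cU$ and no case analysis on the fiber.

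Your handling of singular fibers, however, has a genuine gap. Approach (b) is backwards: for a flat family of sheaves on a flat projective family of curves, (semi)stability is an \emph{open} condition on the base, not a closed one (the Harder--Narasimhan polygon is upper semicontinuous under specialization). Thus stability on nearby smooth fibers says nothing about a singular fiber, and coprimality of rank and degree does not help --- a family of stable rank-$2$ degree-$3$ bundles can perfectly well specialize to an unstable bundle. Approach (a) may be salvageable but is only sketched: the genericity claim that $L$ misses all nodes of the $24$ singular fibers needs an argument, and on a nodal curve a saturated rank-$1$ subsheaf of a locally free sheaf need not itself be a line bundle, so the bound on zeros of a section of $M^\vee$ requires more care. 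A minor additional point: to invoke Proposition~\ref{prop:stablerk2bundles}\eqref{item:sphericalstable} you must first check that your bundle $\cE$ is simple (hence spherical, since $v(\cE)^2 = -2$), which you assert but do not verify.
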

\begin{proof}
By the previous observation, $\cU$ is stable with respect to the ample polarization $E + \epsilon D$ for $\epsilon > 0$, and thus at least semistable with respect to the nef polarization $E$. As $E.c_1(\cU) = -3$ is odd, this means it is stable with respect to $E$ (in the sense that any saturated subsheaf of $\cU$ has strictly smaller slope).

Now we can follow the proof of \cite[Theorem 5.2]{Lang:ssposchar}.
Assume that there is a curve $C \subset S$ of class $E$ such that $\cU_C$ is unstable. Then there exists a line bundle $L_C$ on $C$ of degree $d \le -2$ and a surjection $\cU_C \twoheadrightarrow L_C$. Let $\cK$ be the kernel of the composition $\cU \twoheadrightarrow \cU_C \twoheadrightarrow L_C$. Since $c_1(\cK).E = c_1(\cU).E$, it is also slope-stable with respect to $E$. Since $v(L_C) = (0, E, d)$ we have $v(\cK) = (2, -D-2E, 3 - d)$ and hence
\[
v(\cK)^2 = (D + 2E)^2 - 2\cdot 2\cdot (3-d) = 16 - 12 + 4d \le -4,
\]
a contradiction.
\end{proof}

\begin{remark}
As pointed out by Kuznetsov, the bundle $\cU$ can be described explicitly as follows. 
Note that $\Hom(\cO(D), \cO_L(2)) = \bC^2$ and the corresponding map $\cO(D)^{\oplus 2} \to \cO_L(2)$ is surjective. 
Let $\cF$ be the vector bundle defined by the short exact sequence 
\begin{equation*}
    0 \to \cF \to \cO(D)^{\oplus 2} \to \cO_L(2) \to 0. 
\end{equation*}
A computation shows that $\Hom(\cF, \cF) = \bC$ and 
$v(\cF^{\svee}) = (2, - D-E, 3)$. Therefore, $\cF^{\svee}$ is a rank $2$ spherical vector bundle, and hence by Proposition~\ref{prop:stablerk2bundles}.\ref{item:sphericalstable} it is $\mu_D$-stable. 
We conclude that $\cU \cong \cF^{\svee}$. 
\end{remark}

\subsection{Conjugate autoequivalences}
Since we can consider $S$ either as a quartic K3 surface, or as a degeneration of a GM K3 surface, there are two natural autoequivalences associated to it by Theorem~\ref{theorem-K3-cover-examples}: 
\begin{align*}
\Phi^{\mathrm{quartic}} &= \left(\rT_\cO  \circ\bigl( \blank \otimes \cO(D) \bigr)\right)^2[-1]  \\
\Phi^{\mathrm{GM}} & = \rT_\cU \circ \rT_\cO \circ \bigl(\blank \otimes \cO(D+E) \bigr)[-1], 
\end{align*}
where $\rT_\cO$ and $\rT_{\cU}$ are the spherical twists around $\cO$ and $\cU$. 

We will show that $\rT_{\cO(-D)}^2 \Phi^{\mathrm{GM}}$ and $\Phi^{\mathrm{quartic}}$ are conjugate to each other. Since by Theorem~\ref{theorem-K3-cover-examples}\eqref{K3-cover-GM} the latter generates the residual $\bZ/2$-action on $\Db(S)$ as the CY2 cover of the Kuznetsov component of the associated quartic double solid, it will follow that the former also generates a $\bZ/2$-action on $\Db(S)$.

The following lemma will allow us to prove our identity by computing the images of skyscraper sheaves of points.
\begin{lemma} \label{lem:howtoshowidentityinDbS}
Let $S$ be a smooth projective K3 surface, and let $F_1, F_2$ be two autoequivalences of $\Db(S)$. Assume that:
\begin{enumerate}
    \item \label{item:Hactionsame} $F_1$ and $F_2$ have the same action on $\tH(S, \bZ)$.
    \item \label{item:skyscraperssame} Applying $F_1$ and $F_2$ to skyscraper sheaves of points gives the same set of objects:
    $$\{F_1(\cO_s)\}_{s \in S(\bC)} = \{F_2(\cO_s)\}_{s \in S(\bC)}.$$
\end{enumerate}
Then $F_1$ and $F_2$ are isomorphic functors. 
\end{lemma}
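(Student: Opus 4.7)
The plan is to reduce to showing that $F := F_2^{-1} \circ F_1$ is isomorphic to the identity. Applying $F_2^{-1}$ to the set-theoretic equality in hypothesis~\eqref{item:skyscraperssame} rewrites both hypotheses in terms of $F$ alone: there is a bijection $\sigma \colon S(\bC) \to S(\bC)$ with $F(\cO_s) \cong \cO_{\sigma(s)}$ for every $s \in S(\bC)$, and $F$ acts as the identity on $\tH(S, \bZ)$. The target then becomes $F \cong \id_{\Db(S)}$.

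The second step is to classify $F$ using the fact that it sends skyscraper sheaves of closed points to skyscraper sheaves of closed points (not shifts thereof). Writing $F = \Phi_K$ with $K \in \Db(S \times S)$ via Orlov's theorem, the hypothesis translates to $K|_{\{s\} \times S} \cong \cO_{\sigma(s)}$ for every $s$. A standard argument --- the kernel must be concentrated in a single cohomological degree, supported on a closed subscheme which is finite of degree one over the first factor --- then forces $K$ to be a coherent sheaf isomorphic to the pushforward of a line bundle $L \in \Pic(S)$ along the graph embedding of an automorphism $f \in \Aut(S)$. Equivalently,
\begin{equation*}
F \cong ({-} \otimes L) \circ f_*
\end{equation*}
for some $f \in \Aut(S)$ and $L \in \Pic(S)$.

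The third step extracts $L$ and $f$ from the triviality of $F$ on $\tH(S, \bZ)$. Evaluating on $(1, 0, 0) \in \rH^0(S, \bZ)$, on which $f_*$ acts as the identity, gives $\ch(L) = (1, 0, 0)$, so $c_1(L) = 0$; since for a K3 surface the first Chern class is injective on $\Pic(S)$, this yields $L \cong \cO_S$. Then $F \cong f_*$ must act trivially on $\rH^2(S, \bZ)$, and the strong global Torelli theorem --- which asserts the injectivity of the natural map $\Aut(S) \to O(\rH^2(S, \bZ))$ --- forces $f = \id_S$, hence $F \cong \id_{\Db(S)}$ and therefore $F_1 \cong F_2$.

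The main obstacle is the classical structure theorem invoked in the second step, for Fourier--Mukai kernels whose restrictions to fibers of the first projection are skyscraper sheaves of closed points; this I would simply quote from the standard references on Fourier--Mukai theory for K3 surfaces. The remaining two steps are essentially formal: a reduction to the composite $F$ and a short calculation on the Mukai lattice combined with global Torelli.
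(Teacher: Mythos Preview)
Your proof is correct and essentially identical to the paper's: reduce to the composite $F_2^{-1}\circ F_1$ (the paper uses $F_1^{-1}\circ F_2$), invoke Huybrechts' Corollary~5.23 to write it as $f_*$ composed with tensoring by a line bundle, then use the trivial action on $\tH(S,\bZ)$ together with Torelli to conclude. The only cosmetic difference is that the paper reads off triviality of the line bundle from preservation of $v(\cO_S)=(1,0,1)$ rather than of $(1,0,0)$.
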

\begin{proof}
Assumption \eqref{item:skyscraperssame} implies that $F_1^{-1} \circ F_2$ sends skyscraper sheaves of points to skyscraper sheaves of points. By \cite[Corollary 5.23]{Huybrechts:FM}, it is the composition of the  pushforward along an automorphism $f$ of $S$ and tensoring with a line bundle $M$. By \eqref{item:Hactionsame}, $F_1^{-1} \circ F_2$ acts trivially on cohomology. So  $F_1^{-1} \circ F_2$ preserves the class $v(\cO_S)$ and $M$ is trivial. But then the action of $f_*$ on $\rH^2(S, \bZ)$ is trivial, and thus $f$ is the identity by the Torelli theorem for K3 surfaces.
\end{proof}

\begin{remark} \label{rem:bijectionfromstability}
Condition \eqref{item:skyscraperssame} holds automatically when the $F_i(\cO_s)$ are, up to the same shift, slope-stable vector bundles for the same polarization. Indeed, both $F_1$ and $F_2$ induce an injective map from $S$ to the moduli space of vector bundles of class $F_i(\cO_s)$; since this moduli space is two-dimensional and irreducible, both maps are bijections on closed points. 
\end{remark}

We first prove that the actions of $\Phi^{\mathrm{quartic}}$ and $\Phi^\mathrm{GM}$ on cohomology are conjugate.

\begin{lemma} \label{lem:sameHaction}
Let $\Psi \colon \Db(S) \to \Db(S)$ be the autoequivalence given by 
\[ \Psi(\blank) = \rT_{\cO(-D)} \bigl(\blank \otimes \cO(-E) \bigr) . \]
Then $\Phi^{\mathrm{quartic}}$ and 
$\Psi^{-1} \circ \rT_{\cO(-D)}^2 \Phi^{\mathrm{GM}} \circ \Psi$ have the same action on $\tH(S, \bZ)$. 
\end{lemma}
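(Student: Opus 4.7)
The plan is to reduce the claim to a finite matrix computation on the algebraic part of $\tH(S, \bZ)$.

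To begin, I observe that every autoequivalence appearing on either side is a composition of shifts, tensor products with line bundles whose first Chern class lies in $\Pic(S) = \bZ D \oplus \bZ E$, and spherical twists around the objects $\cO$, $\cU$, and $\cO(-D)$, whose Mukai vectors are algebraic. For any transcendental class $\alpha \in \rH^2(S, \bZ)$ one has $D \cdot \alpha = E \cdot \alpha = 0$, and the cohomological formulas for tensor products with line bundles (namely multiplication by $\ch(\cL)$) and for spherical twists (namely the reflection \eqref{eq:rhovE}) immediately show that the transcendental complement of $\tH^{1,1}(S, \bZ) \subset \tH(S, \bZ)$ is preserved and acted on trivially by each of these factors. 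The shift $[-1]$ acts by $-\id$, and since it appears exactly once on each side (within $\Phi^{\mathrm{quartic}}$ on the left, and within $\Phi^{\mathrm{GM}}$ on the right), both sides act by $-\id$ on the transcendental lattice. So it suffices to prove the identity on the rank-$4$ algebraic lattice $\tH^{1,1}(S, \bZ)$, which in the basis $(1,0,0), (0,D,0), (0,E,0), (0,0,1)$ is a statement about $4 \times 4$ integer matrices.

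On the algebraic lattice, the key further simplification is that $(\rT_{\cO(-D)})^2$ acts as the identity on cohomology. Indeed, for any spherical object $\cE$, the reflection $\rho_{v(\cE)}$ in \eqref{eq:rhovE} satisfies $\rho_{v(\cE)}^2 = \id$, using $(v(\cE), v(\cE)) = -2$. The desired identity thus reduces to
\begin{equation*}
\Phi^{\mathrm{GM}}_* \circ \Psi_* = \Psi_* \circ \Phi^{\mathrm{quartic}}_*
\end{equation*}
on the algebraic lattice.

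The remaining verification is a routine matrix computation, and the main obstacle is purely one of bookkeeping. Using the intersection numbers $D^2 = 4$, $D \cdot E = 3$, $E^2 = 0$ together with $v(\cU) = (2, -D-E, 3)$, one writes down the $4 \times 4$ matrices representing $\otimes \cO(D)$, $\otimes \cO(D+E)$, $\otimes \cO(-E)$, $\rT_\cO$, $\rT_\cU$, $\rT_{\cO(-D)}$, and $[-1]$ in the chosen basis. Multiplying these in the orders prescribed by $\Psi_* \circ \Phi^{\mathrm{quartic}}_*$ and $\Phi^{\mathrm{GM}}_* \circ \Psi_*$ respectively, one checks that the two resulting matrices agree entrywise. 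As consistency checks during the computation, one verifies that $\Phi^{\mathrm{GM}}_*$ fixes both generators $(1, -B, 4)$ and $(2, -B, 2)$ of the expected invariant sublattice, where $B = D + E$ is the degree $10$ polarization (matching Lemma~\ref{lem:Z2invariantlattice}\eqref{item:K3eigenspace}), and that $\Phi^{\mathrm{quartic}}_*$ fixes both $(1, -D, 1)$ and $(1, 0, -1)$ (matching \eqref{eq:PhiYbraction}).
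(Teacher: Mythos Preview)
Your proposal is correct and follows essentially the same approach as the paper: both arguments split $\tH(S,\bZ)$ into the algebraic rank-$4$ sublattice and its transcendental complement, observe that each side acts by $-\id$ on the latter, and reduce the claim to a direct matrix computation on the former. Your additional observation that $(\rT_{\cO(-D)})^2$ acts trivially on cohomology (so the identity simplifies to $\Phi^{\mathrm{GM}}_* \circ \Psi_* = \Psi_* \circ \Phi^{\mathrm{quartic}}_*$) is a minor streamlining; the paper instead records the explicit images of the basis vectors under both sides, which you could include to make the verification self-contained.
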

\begin{proof}
The lemma follows by direct computation, similar to the proof of Lemma \ref{lem:Z2invariantlattice}.
More precisely, we find that $\Phi^{\mathrm{quartic}}$ and
$\Psi^{-1} \circ \rT_{\cO(-D)}^2 \Phi^{\mathrm{GM}} \circ \Psi$ both act on a basis of the algebraic part of $\tH(S, \bZ)$ as follows:  
\begin{align*}
  (1,0,0) & \mapsto (-1,D,-2) , \\
  (0,D,0) & \mapsto (-4,3D,-4) , \\
  (0,E,0) & \mapsto (-3,3D-E,-3) , \\
  (0,0,1) & \mapsto (-2,D,-1) .
\end{align*}
Moreover, they each act by multiplication by $-1$ on the orthogonal complement of $D$ and $E$ in $\rH^2(S, \bZ)$, and thus they agree on all of $\tH(S, \bZ)$. 
\end{proof}

\begin{proposition} \label{prop:conjugateZ2subgroups}
The two autoequivalences $\Phi^{\mathrm{quartic}}$ and 
$\Psi^{-1} \circ \rT_{\cO(-D)}^2 \Phi^{\mathrm{GM}} \circ \Psi $ of $\Db(S)$ are isomorphic functors.
\end{proposition}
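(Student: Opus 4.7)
The plan is to apply Lemma~\ref{lem:howtoshowidentityinDbS}, the first of whose hypotheses is already Lemma~\ref{lem:sameHaction}. For the second hypothesis, by Remark~\ref{rem:bijectionfromstability} it suffices to show, for both $F_1 = \Phi^{\mathrm{quartic}}$ and $F_2 = \Psi^{-1} \circ \rT_{\cO(-D)}^2 \circ \Phi^{\mathrm{GM}} \circ \Psi$, that $F_i(\cO_s)$ is isomorphic to the shift $\cV_s^{(i)}[1]$ of a $\mu_D$-stable vector bundle with Mukai vector $(2, -D, 1)$ for $s$ ranging over a dense open of $S$. Once this is known, Remark~\ref{rem:bijectionfromstability} identifies each $F_i$ with a bijection from $S$ onto the two-dimensional irreducible moduli space of such bundles, so the image sets coincide and hypothesis \eqref{item:skyscraperssame} of Lemma~\ref{lem:howtoshowidentityinDbS} is verified.

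For $F_1$ this is a direct computation. Using $\cO_s \otimes \cO(D) \cong \cO_s$ and $\rT_\cO(\cO_s) \cong I_s[1]$, then twisting by $\cO(D)$ and applying $\rT_\cO$ once more, one obtains $F_1(\cO_s) \cong \cV_s^{(1)}[1]$, where $\cV_s^{(1)}$ is the rank $2$ vector bundle fitting into the exact sequence
\[
0 \to \cV_s^{(1)} \to \cO_S^{\oplus 3} \to I_s(D) \to 0
\]
given by evaluation on a basis of $\rH^0(S, I_s(D)) = \bC^3$. Since $\cV_s^{(1)}$ is a subsheaf of $\cO_S^{\oplus 3}$ it is simple, so together with $v(\cV_s^{(1)})^2 = 0$ it satisfies the hypotheses of Proposition~\ref{prop:stablerk2bundles}\eqref{item:semirigidstable}. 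Stability with respect to $D$ then reduces to checking that neither $\cO(-E)$ nor $\cO(-D+E)$ injects into $\cV_s^{(1)}$ for general $s$; this is a direct syzygy computation from the displayed exact sequence, ruled out by a dimension count on $\rH^0(\cO(E))$ and $\rH^0(\cO(D-E))$ against a basis of $\rH^0(I_s(D))$.

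The main obstacle is the analogous analysis for $F_2$, since the spherical twist $\rT_\cU$ is considerably less transparent than $\rT_\cO$. The strategy is to unwind $F_2(\cO_s)$ step by step, using the explicit description of $\cU$ together with Lemma~\ref{lem:Ufiberstable} to control the geometry of $\rT_\cU$ applied to sheaves meeting the fibers of the elliptic fibration. This should identify $F_2(\cO_s)$ with $\cV_s^{(2)}[1]$ for a rank $2$ sheaf of Mukai vector $(2,-D,1)$ satisfying the hypotheses of Proposition~\ref{prop:stablerk2bundles}\eqref{item:semirigidstable}. If $\cV_s^{(2)}$ turns out to be $\mu_D$-unstable, Proposition~\ref{prop:stablerk2bundles}\eqref{item:semirigidstable} provides a stable representative $\rT_{\cO(B)}(\cV_s^{(2)})$ of the same Mukai vector; but since the cohomological action in Lemma~\ref{lem:sameHaction} already matches $F_1$, the expected outcome is generic stability on the nose. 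With both $F_i(\cO_s)$ thereby recognised generically as $[1]$-shifts of $\mu_D$-stable bundles with common Mukai vector, the argument of the first paragraph applies to finish the proof.
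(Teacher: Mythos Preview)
Your strategy is exactly the paper's: verify the hypotheses of Lemma~\ref{lem:howtoshowidentityinDbS} using Lemma~\ref{lem:sameHaction} and Remark~\ref{rem:bijectionfromstability}. Your treatment of $F_1$ is essentially correct, though ``subsheaf of $\cO_S^{\oplus 3}$, hence simple'' is false (take $\cO_S^{\oplus 2}$); simplicity follows instead from $F_1$ being an autoequivalence. The real gap is that you do not carry out the $F_2$ computation, which is the entire substance of the proof. The paper first absorbs $\Psi$, $\Psi^{-1}$, and the common shift to rewrite the claim as an explicit identity of composites of twists and line bundle tensors; applying the right-hand side to $\cO_s$ step by step one reaches $I_{s/E_s}[1]$ (the ideal of $s$ in its elliptic fibre), and then the crucial step---requiring Lemma~\ref{lem:Ufiberstable}---is that $\Hom^\bullet(\cU, I_{s/E_s}) = \bC$ with surjective evaluation, so that $\rT_\cU(I_{s/E_s}) = \cV_s[1]$ for a rank~$2$ bundle $\cV_s$ with $v(\cV_s) = (2,-D-2E,4)$. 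One then checks via Proposition~\ref{prop:stablerk2bundles}\eqref{item:semirigidstable} that $\cV_s$ is destabilised by $\cO(-D)$ exactly when $s \in L$, and that the remaining $\rT_{\cO(-D)}$ in the composite is precisely the stabilising twist of that proposition, so the final output is a $\mu_D$-stable bundle for \emph{every} $s$.

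Two further points. First, your fallback ``if $\cV_s^{(2)}$ is unstable, replace it by $\rT_{\cO(B)}\cV_s^{(2)}$'' is not usable, since $\rT_{\cO(B)}\cV_s^{(2)}$ is not $F_2(\cO_s)[-1]$ and hence says nothing about the image set of $F_2$; what actually happens is that the stabilising twist is already part of $F_2$ and acts on the intermediate object $\cV_s$, not on the final one. Second, Remark~\ref{rem:bijectionfromstability} requires stability for \emph{all} $s$, not a dense open: otherwise the two image sets are merely dense opens of the moduli space and need not coincide, so condition~\eqref{item:skyscraperssame} of Lemma~\ref{lem:howtoshowidentityinDbS} is not verified. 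The paper secures this for all $s$ on both sides (for $F_1$ by checking $\Hom(\cO(-L),\cF_s)=0$ via injectivity of multiplication by the section cutting out $L$; for $F_2$ via the built-in stabilising twist just described), and so must you.
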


\begin{proof}
Since $\Psi^{-1}(\blank) = \left(\blank \otimes \cO(E) \right) \circ \rT_{\cO(-D)}^{-1} $, the claim is equivalent to
\begin{equation} \label{autoeqequationtoprove}
    \left(\rT_\cO  \circ\bigl( \blank \otimes \cO(D) \bigr)\right)^2
    = \bigl(\blank \otimes \cO(E) \bigr) \circ \rT_{\cO(-D)} \circ 
    \rT_\cU \circ \rT_\cO \circ \bigl(\blank \otimes \cO(D+E) \bigr) 
   \circ \rT_{\cO(-D)} \circ \bigl(\blank \otimes \cO(-E) \bigr).
\end{equation}
By Lemma~\ref{lem:sameHaction}, the two sides have the same action on $\tH(S, \bZ)$. By Lemma~\ref{lem:howtoshowidentityinDbS}  and Remark~\ref{rem:bijectionfromstability}, it will thus be enough to show that applying the LHS and the RHS to skyscraper sheaves of points yields slope-stable vector bundles.

We first consider the LHS of \eqref{autoeqequationtoprove} applied to the skyscraper sheaf at $s \in S(\bC)$. We note that $\rT_{\cO}(\cO_s\otimes \cO(D)) = \rT_{\cO}(\cO_s) = I_s[1]$. Since $\cO(D)$ is very ample, the sheaf $I_s(D)$ has three sections, no higher cohomology, and is globally generated; thus the image of $\cO_s$ is $\cF_s[2]$, where $\cF_s$ is defined by the short exact sequence
\[
0 \to \cF_s \to \cO \otimes \rH^0(I_s(D)) \to I_s(D) \to 0.
\]
It is a simple rank $2$ bundle with $\Ext^1(\cF_s, \cF_s) = \Ext^1(\cO_s, \cO_s) = \bC^2$ and 
$c_1(\cF_s) = -D$. 
 By Proposition~\ref{prop:stablerk2bundles}, $\cF_s$ is $\mu_D$-slope stable unless it is destabilized by 
\[
\cO\left(\frac{-D - (D-2E)}2\right) = \cO(-L), 
\]
where we recall that $L = D-E$ is the class of the line on $S$. Since the natural morphism
$$ \Hom(\cO(-L), \cO \otimes H^0(I_s(D)) = H^0(I_s(D)) \to
\Hom(\cO(-L), I_s(D)) =  H^0(I_s(D+L)),$$
induced by multiplying with the defining section of $\cO(L)$ is injective, $\Hom(\cO(-L), \cF_s) = 0$, and thus $\cF_s$ is $\mu_D$-stable.

Now we consider the RHS of \eqref{autoeqequationtoprove}, applied to $\cO_s$. After the first three steps we reach
$$
\bigl( \rT_{\cO(-D)} (\cO_s\otimes \cO(-E))\bigr) 
    \otimes \cO(D+E) = I_s(E)[1].$$
Since $\cO(E)$ is globally generated and has two sections (inducing the elliptic fibration), there is a unique section of $I_s(E)$ vanishing at the elliptic fiber $E_s$ containing $s$; thus 
\[
\rT_\cO(I_s(E)[1]) = I_{s/E_s}[1],
\]
where $I_{s/E_s}$ denotes the image of the composition $I_s \to \cO_S \to \cO_{E_s}$. 
By Lemma \ref{lem:Ufiberstable}, $\cU|_{E_s}$ is a stable vector bundle on the elliptic curve $E_s$ of rank two and degree $-3$. By Serre duality, \[ 
\Ext_S^1(\cU, I_{s/E_s}) = \Ext_{E_s}^1(\cU|_{E_s}, I_{s/E_s}) = \Hom_{E_s}(I_{s/E_s}, \cU|_{E_s})^\vee = 0, \]
and therefore by Riemann--Roch $\Hom(\cU, I_{s/E_s}) = \bC$. 
Using stability of $\cU|_{E_s}$ once more, we see that this map must be surjective. Therefore,
\[
\rT_{\cU}\bigl(I_{s/E_s}[1]) = \cV_s[2]
\]
where the vector bundle $\cV_s$ with $v(\cV_s) = (2, -D - 2E, 4)$ is defined by the short exact sequence
\begin{equation} \label{eq:defVs}
    0 \to \cV_s \to \cU \to I_{s/E_s} \to 0.
\end{equation} 
By Proposition \ref{prop:stablerk2bundles}, $\cV_s$ is $\mu_D$-stable unless it is destabilized by
\[
\cO \left(  \frac {-D -2E - (D - 2E)} 2\right)  = \cO(-D).
\]
We claim that $\Hom(\cO(-D), \cV_s[i]) = 0$ for all $i$ if $s \notin L$, and $\Hom(\cO(-D), \cV_s) \neq 0$ if $s \in L$.

To prove the claim, first note that by stability
$\Ext^2(\cO(-D), \cU) = \Hom(\cU, \cO(-D))^{\vee} = 0$. Moreover, 
$\Ext^1(\cO(-D), \cU) = \Ext^1(\cU, \cO(-D))^\vee = 0$; otherwise, the corresponding extension would define $\mu_D$-stable vector bundle of Mukai vector $(3, -2D-E, 6)$, 
whose square is $-8$, a contradiction. 
Therefore, 
\[
\Hom^\bullet(\cO(-D), \cU) = \bC^2[0] = \Hom^\bullet(\cO(-D), I_{s/E_s}).
\]
To prove the claim, we need to show that the long exact sequence obtained by applying $\Hom(\cO(-D), \blank)$ to the map $\cU \to I_{s/E_s}$ in \eqref{eq:defVs} induces an isomorphism in degree 0 if and only if $s \notin L$. 

Every nonzero morphism $\cO(-D) \to \cU$ fits into short exact sequence
\[ 
0 \to \cO(-D) \to \cU \to I_{s'}(-E) \to 0
\]
An easy computation shows that $s' \in L$, as otherwise \begin{equation*}
\Ext^1(I_{s'}(-E), \cO(-D)) = H^1(I_{s'}(L))^\vee = 0.
\end{equation*}
Conversely, if $s' \in L$, the extension exists, and thus arises from a morphism $\cO(-D) \to \cU$. Now observe that the composition 
$\cO(-D) \to \cU \to I_{s/E_s}$ is nonzero unless there is a morphism $I_{s'}(-E) \to I_{s/E_s}$, which exists if and only if $s = s'$. This proves the claim. 

Thus $\rT_{\cO(-D)}\cV_s = \cV_s$ if $s \notin L$, and Proposition \ref{prop:stablerk2bundles} shows that $\rT_{\cO(-D)}\cV_s$ is $\mu_D$-stable for all $s \in S$. Thus both the LHS and the RHS of \eqref{autoeqequationtoprove} send $\cO_s$ to the shift by $[2]$ of a $\mu_D$-stable vector bundle.
By Lemma \ref{lem:howtoshowidentityinDbS} and Remark \ref{rem:bijectionfromstability}, this completes the proof of the proposition.
\end{proof}

\subsection{$\bZ/2$-action in families}
\begin{proof}[Proof of Theorem~\ref{main-theorem-2}]
Consider the quasi-projective moduli space $\cF_6$ of degree 10 (genus~6) polarized K3 surfaces. It contains as a Noether-Lefschetz divisor the locus of K3 surfaces lattice polarizable with the lattice $\begin{pmatrix} 4 & 3 \\ 3 & 0 \end{pmatrix}$ spanned by $D$ and $E$ where $D+E$ corresponds to the given polarization of degree 10.

Let $C \subset \cF_6$ be a smooth curve intersecting this divisor transversely at a single point $o \in C$, such that $o$ corresponds to a quartic K3 of Picard rank two containing a line. By base change to a finite cover if necessary, we can assume that there exists a family of K3 surfaces $\pi \colon \cS \to C$ with polarization $\cH$. The very general point of $C$ necessarily corresponds to a K3 surface of Picard rank one, which is a GM surface by \cite{Mukai:newdevelopment} and \cite[Lemma 2.8]{GreerLiTian:PicardMukai}; shrinking $C$ if necessary, we can assume $C \setminus \{o\}$ parameterizes only GM surfaces.

Up to possibly passing to a cover of $C$, there exists a rank $2$ vector bundle $\fU$ on $\cS$ whose restrictions to fibers $\cS_c$ is the unique $\cH$-stable vector bundle of Mukai vector $(2, -\cH_c, 3)$; so it is the tautological subbundle on GM fibers, and  the unique stable bundle of Mukai vector  $(2, -D-E, 3)$ appearing in Lemma~\ref{lem:Ufiberstable} on $\cS_o$. 
Let $i_o\colon \cS_o \to \cS$ be the inclusion of the special fiber. 
Then the object $i_{o*} \cO(-D)$ is spherical;  indeed, by our choice of the curve $C \subset \cF_6$ the object $\cO(-D)$ does not deform in the family $\pi \colon \cS \to C$, so the claim holds by \cite[Proposition~1.4]{HT:Pobjects}. 
Now consider the following  autoequivalence of $\Dperf(\cS)$:
\begin{equation*}
\Pi  = \rT_{i_{o*} \cO(-D)} \circ \rT_{\fU/C} \circ \rT_{\cO/C} \circ \bigl(\blank \otimes \cO_{\cS}(\cH) \bigr)[-1],
\end{equation*}
where $\rT_{i_{o*} \cO(-D)}$ is the spherical twist around  $i_{o*} \cO(-D)$, 
and $\rT_{\fU/C}$ and $\rT_{\cO/C}$ are the spherical twists associated to the spherical functors 
$\Dperf(C) \to \Dperf(\cS)$ given by $F \mapsto \pi^*F \otimes \fU$ and $F \mapsto \pi^*F$.  Each of these three spherical twists are associated to $C$-linear spherical functors: in the case of $\rT_{{i_{o*}} \cO(-D)}$ for the functor $\Dperf(o) \to \Dperf(\cS)$, $V \mapsto i_{o*} \cO(-D) \otimes V$. Hence the  spherical twists, and thus $\Pi$, are also $C$-linear.

The autoequivalence $\Pi$ induces an autoequivalence $\Pi_c$ on $\Dperf(\cS_c)$ for every fiber by base change. 
For $c \neq 0$, it is the residual action $\Phi^{\mathrm{GM}}$ of Theorem~\ref{theorem-K3-cover-examples} on GM K3s as the CY2 covers of Kuznetsov components of corresponding special GM threefolds. For $c = o$, by 
\cite[Proposition~2.7 and Proposition~2.9]{HT:Pobjects} we get
\[ \Pi_o = \rT_{\cO(-D)}^2 \circ \rT_\cU \circ \rT_\cO \circ \bigl(\blank \otimes \cO(D+E)\bigr)[-1] = 
\rT_{\cO(-D)}^2  \Phi^{\mathrm{GM}},
\]
the autoequivalence considered in Lemma~\ref{lem:sameHaction} and Proposition~\ref{prop:conjugateZ2subgroups}.
In particular, for every $c \in C$, the autoequivalence $\Pi_c$ is an involution that generates a $\bZ/2$-action on $\Dperf(\cS_c)$. 

The functor $\Pi \circ \Pi$ is  the identity on every fiber, and thus sends every skyscraper sheaf $\cO_s$ for $s \in \cS$ to itself. Since $\Pi \circ \Pi$ is a Fourier--Mukai transform by construction, one can easily adapt the proof of \cite[Corollary~5.23]{Huybrechts:FM} to show that it is given by tensor product with a line bundle; this line bundle is trivial on the fibers of $\pi$, i.e.~ it is pulled back from $C$. Shrinking $C$ further if necessary, we may assume this line bundle to be trivial, and therefore that $\Pi$ is an involution of $\Dperf(\cS)$. 

We therefore have a homomorphism $\phi \colon \bZ/2 \to \pi_0(\Aut(\Dperf(\cS)/C))$ as in Corollary \ref{corollary-obstruction-HH}, and want to show that the obstruction to the existence of an $\infty$-lift vanishes.  The restriction of $\phi$ to $\cS_o$ can be lifted to a $\bZ/2$-action, as by Proposition~\ref{prop:conjugateZ2subgroups} it is conjugate to the residual action on $\Dperf(\cS_o)$ coming from its realization as the CY2 cover of the Kuznetsov component of the corresponding quartic double solid. By Proposition \ref{proposition-etale-vanishing}, we can replace $C$ by an \'etale neighborhood $B$ of $o \in C$ such that the obstruction to lifting $\phi$ vanishes.

We have thus obtained a $\bZ/2$-action on $\Dperf(\cS)$ over $B$ whose generator acts by the involution $\Pi$. The associated invariant category $\cC \coloneqq \Dperf(\cS)^{\bZ/2}$ has the properties claimed in Theorem~\ref{main-theorem-2}. 
Indeed, by Proposition~\ref{proposition-invariants-smooth-proper} the category $\cC$ is smooth and proper over $B$, and by Lemma~\ref{lemma-invariants-bc} the fiber $\cC_b$ over $b \in B$ is given by the invariant category $\Db(\cS_b)^{\bZ/2}$ for the induced $\bZ/2$-action $\phi_b$. 
Let $Y \to \bP^3$ be the quartic double solid branched along $\cS_o$, and for $b \neq 0$ let $X_b$ be the GM threefold opposite (in the sense of Definition~\ref{definition-opposite}) to the GM surface $\cS_b$. 
By construction, for $b = o$ the
action of $\phi_o$ on $\Db(\cS_o)$ is conjugate to the residual action on $\Db(\cS_o)$ from Theorem~\ref{theorem-K3-cover-examples}\eqref{quartic-involution}, and thus its invariant category is equivalent to $\Ku(Y)$ (Lemma~\ref{lemma-reconstruction}). 
Similarly, by construction and Remark~\ref{remark-connected-enriques}, for $b \neq o$ the 
action $\phi_b$ is equivalent to the residual action on  $\Db(\cS_b)$ from  Theorem~\ref{theorem-K3-cover-examples}\eqref{K3-cover-GM}, so its invariant category is equivalent to $\Ku(X_b)$. 
\end{proof}

%%%%%%%%%%%%%%%%%%%%%%%%%%%%%%%%%%%%%%%%%%%%%%%%%%%%%%

\newcommand{\etalchar}[1]{$^{#1}$}
\providecommand{\bysame}{\leavevmode\hbox to3em{\hrulefill}\thinspace}
\providecommand{\MR}{\relax\ifhmode\unskip\space\fi MR }
% \MRhref is called by the amsart/book/proc definition of \MR.
\providecommand{\MRhref}[2]{%
  \href{http://www.ams.org/mathscinet-getitem?mr=#1}{#2}
}
\providecommand{\href}[2]{#2}

%%%%%%%%%%%%%%%%%%%%%%%%%%%%%%%%%%%%%%%%%%%%%%%%%%%%%%

\end{document}